\newcommand{\bs}[1]{\boldsymbol{#1}}
\newcommand{\norm}[1]{\ensuremath{\left\|#1\right\|}}
\newcommand{\tnorm}[1]{{\left\vert\kern-0.25ex\left\vert\kern-0.25ex\left\vert #1 
		\right\vert\kern-0.25ex\right\vert\kern-0.25ex\right\vert}}
\crefname{hypothesis}{Hypothesis}{Hypotheses}
\crefname{ALC@unique}{Line}{Lines}
\DeclareMathOperator*{\esssup}{ess\,sup}
\colorlet{texcscolor}{blue!50!black}
\colorlet{texemcolor}{red!70!black}
\colorlet{texpreamble}{red!70!black}
\colorlet{codebackground}{black!25!white!25}
\lstdefinestyle{siamlatex}{%
	style=tcblatex,
	texcsstyle=*\color{texcscolor},
	texcsstyle=[2]\color{texemcolor},
	keywordstyle=[2]\color{texemcolor},
	moretexcs={cref,cref,maketitle,mathcal,text,headers,email,url},
}
\DeclareTotalTCBox{\code}{ v O{} }
{ %fontupper=\ttfamily\color{texemcolor},
	fontupper=\ttfamily\color{black},
	nobeforeafter,
	tcbox raise base,
	colback=codebackground,colframe=white,
	top=0pt,bottom=0pt,left=0mm,right=0mm,
	leftrule=0pt,rightrule=0pt,toprule=0mm,bottomrule=0mm,
	boxsep=0.5mm,
	#2}{#1}
		\newcommand*{\bdiv}{%
			\nonscript\mskip-\medmuskip\mkern5mu%
			\mathbin{\operator@font div}\penalty900\mkern5mu%
			\nonscript\mskip-\medmuskip
		}
\patchcmd\newpage{\vfil}{}{}{}
\title{Optimal Control of Stationary Doubly Diffusive Flows on Lipschitz Domains \thanks{\funding{This work was supported by the SERB-CRG India (Grant Number : CRG/2021/002569).}}
}
\author{Jai Tushar\thanks{School of Mathematics, Monash University, Melbourne, Australia 
		(\email{jai.tushar@monash.edu}).}
	\and Arbaz Khan\thanks{Department of Mathematics, Indian Institute of Technology, Roorkee, India 
		(\email{arbaz@ma.iitr.ac.in}, \email{maniltmohan@ma.iitr.ac.in}).}
	\and Manil T. Mohan\footnotemark[3]}
\begin{document}
\maketitle
\date{\today}
\begin{abstract}
		In this work, we study the control constrained distributed optimal control of a stationary doubly diffusive flow model. For the control problem, we use a well-posedness analysis based on minimal assumptions on data and domain. We show the existence of an optimal control with quadratic type cost functional, study the Fr\'echet differentiability properties of the control-to-state map and establish the first-order necessary optimality conditions corresponding to the optimal control problem. Expanding on this we prove the local optimality of a reference control using second-order sufficient optimality condition for the control problem.
\end{abstract}

\begin{keywords} 
	Doubly diffusive flows, cross diffusion, optimal control, KKT optimality system, second-order sufficient optimality conditions
\end{keywords}
\begin{AMS} 
	49J20, 49J50, 49K20.
\end{AMS}

\section{Introduction}\label{Intro}
In this article, we study the following bilaterally constrained doubly diffusive flow optimal control problem:
\begin{eqnarray}\label{P:CF}
	\min_{\boldsymbol{U} \in \boldsymbol{\mathcal{U}}_{ad}} J(\boldsymbol{u},\boldsymbol{y},\boldsymbol{U}) = \frac{1}{2} \norm{\boldsymbol{u}-\boldsymbol{u}_d}^2_{0,\Omega} + \frac{1}{2}\norm{\boldsymbol{y} - \boldsymbol{y}_{d}}^2_{0,\Omega} + \frac{\lambda}{2} \norm{\boldsymbol{U}}^2_{0,\Omega},
\end{eqnarray}
subject to
\begin{align}\label{P:GE}
	\left\{
	\begin{aligned}
		\boldsymbol{K}^{-1} \boldsymbol{u} + (\boldsymbol{u} \cdot \nabla) \boldsymbol{u} - \boldsymbol{\bdiv}(\nu(T) \nabla \boldsymbol{u}) + \nabla p &= \boldsymbol{F}(\boldsymbol{y}) + \boldsymbol{U} \;\; \mbox{in} \;\; \Omega, \\
		\bdiv\hspace{0.04cm}\boldsymbol{u} &= 0 \;\; \mbox{in} \;\; \Omega, \\
		-\boldsymbol{\bdiv}\hspace{-0.1cm}(\boldsymbol{D} \nabla \boldsymbol{y}) + (\boldsymbol{u} \cdot \nabla) \boldsymbol{y} &= 0 \;\; \mbox{in} \;\; \Omega, \\
		\boldsymbol{y} = \boldsymbol{y}^D, \;\; \boldsymbol{u} &= \boldsymbol{0} \;\; \mbox{on} \;\; \Gamma,
	\end{aligned}
	\right.
\end{align}
where $\boldsymbol{u}$ denotes the fluid velocity, $p$ stands for the pressure field, $\boldsymbol{y}:= (T, S)^\top$, $S$ represents the concentration of a certain species within this fluid, and $T$ denotes the temperature, and $\boldsymbol{U}$ is a control. The domain $\Omega \subset \mathbb{R}^d \; (d = 2, 3)$ is  bounded with Lipschitz boundary $\Gamma$. The set of admissible controls $\boldsymbol{\mathcal{U}}_{ad}$  is a non-empty, closed and convex set defined by 
\begin{align}\label{Uad}
	\boldsymbol{\mathcal{U}}_{ad} := \{\boldsymbol{U}=(U_1,\ldots,U_d) \in \boldsymbol{L}^s(\Omega): U_{a_{j}}(x) \leq U_{j}(x) \leq U_{b_{j}}(x) \;\; \mbox{a.e. in} \;\; \Omega, \ j=1,\ldots,d\},
\end{align}
where the exponent $s$ will be made precise later on, see in particular Section \ref{SSC}, and $U_{a_{j}}(x), U_{b_{j}}(x) \in L^s(\Omega)$ are given, satisfying $U_{a_{j}}(x) \leq U_{b_{j}}(x)$ for all $j = 1, \ldots, d$ and a.e. $x \in \Omega$.  The desired states  $(\boldsymbol{u}_d,\boldsymbol{y}_d)\in\boldsymbol{L}^2(\Omega)\times [L^2(\Omega)]^2$, $\lambda > 0$ is a regularization parameter, $\nu>0$ denotes the temperature-dependent viscosity function, $\boldsymbol{K}(x) > 0$ is the permeability matrix, $\boldsymbol{F}(\boldsymbol{y})$ is a given function modelling buoyancy, and $\boldsymbol{D}$ is a $2 \times 2$  constant matrix of the thermal conductivity and solutal diffusivity coefficients, possibly with cross-diffusion terms. 	The choice of the admissible set of controls $\boldsymbol{\mathcal{U}}_{ad}$ is motivated from \cite{WachsmuthSSC} and allows $\boldsymbol{U}$ to be constrained by functions in $\boldsymbol{L}^s(\Omega)$. Note that in this setup the spaces $L^2(\Omega)$ and $L^{\infty}(\Omega)$ are not the same.

\subsection{Assumptions}\label{Assum}
Throughout this article, we make the following assumptions on the governing equation \eqref{P:GE}. The boundary data is $\boldsymbol{y}^D = (T^D, S^D)^\top \in [H^{1/2}(\Gamma)]^2$ regular. The kinematic viscosity is Lipschitz continuoys and uniformly bounded $|\nu(T_1) - \nu(T_2)| \leq \gamma_{\nu} |T_1 - T_2| \;\; \mbox{and} \;\; \nu_1 \leq \nu(T) \leq \nu_2,$ for all $T_1,T_2,T\in\mathbb{R}$, 	where $\gamma_{\nu}\geq 0$, $ \nu_1, \nu_2$ are positive constants  and $|\cdot|$ denotes the Euclidean norm in $\mathbb{R}^d$. There exist positive constants $\gamma_F, C_F$ such that the buoyancy term
$|\boldsymbol{F}(\boldsymbol{y}_1) - \boldsymbol{F}(\boldsymbol{y}_2)| \leq \gamma_F |\boldsymbol{y}_1 - \boldsymbol{y}_2|,\;\; \mbox{and} \;\; |\boldsymbol{F}(\boldsymbol{y})| \leq C_F |\boldsymbol{y}|,$ for all $\boldsymbol{y}_1,\boldsymbol{y}_2,\boldsymbol{y}\in\mathbb{R}^2$. $\boldsymbol{K}$ is a $d \times d$ permeability matrix with measurable coefficients which is assumed to be symmetric and uniformly positive definite, hence, its inverse satisfies $\boldsymbol{v}^\top \boldsymbol{K}^{-1}(x) \boldsymbol{v} \geq \alpha_1 |\boldsymbol{v}|^2,$ for all  $\boldsymbol{v} \in \mathbb{R}^d$ and $x \in \Omega,$ for a constant $\alpha_1 > 0$. The constant matrix $\boldsymbol{D}$ is assumed to be positive definite (though not necessarily symmetric), that is, $\boldsymbol{s}^\top \boldsymbol{D} \boldsymbol{s} \geq \alpha_2 |\boldsymbol{s}|^2,$ for all $\boldsymbol{s} \in \mathbb{R}^2$, for a constant $\alpha_2 > 0$.

\subsection{Related works} 
To analyze the proposed control problem, the first step is to investigate the well-posedness of the governing (or state) equation \eqref{P:GE} under the given assumptions. The solvability of the stationary Navier–Stokes equations on bounded domains with smooth boundaries has been established in the classical works of \cite{temam2001navier, Temam_1995}. The authors in \cite{burger2019h} studied the existence of weak solutions and a uniqueness result for the proposed governing equation, however this analysis does not translate directly to the cases, when either the domain is Lipschitz and/or the data is less regular (eg, $\bs{L}^2(\Omega)$). A new well-posedness analysis based on minimal assumptions on data and geometry of the domain was presented by us in \cite{DDF} to overcome this issue, moreover this analysis is readily applicable to our proposed problem. 

In the context of optimal control problems for the Navier–Stokes equations, we refer to the foundational works \cite{abergel1990, SS_OptimalControlViscousFlow, MG_Book}. Moving closer to the specific problem addressed in this article, optimal control problems governed by the stationary Boussinesq equations have been studied in \cite{SICON_Lee, JMAA_Lee}, where both distributed and Neumann boundary control cases are considered, and optimality systems are derived using Lagrangian multipliers under small data assumptions. In many of the above works, smallness assumptions are imposed to ensure uniqueness and differentiability of the control-to-state map. We refer to \cite{JCD_smalldata, WachsmuthSSC, EO_smalldata} for such results. We must also mention that, in some practical scenarios, it is known that solutions to the Navier–Stokes equations are locally unique (non-singular) and continuously depend on the viscosity without any smallness assumption (see \cite[Section 3, p. 297]{Book_FEM_NSE}). This observation is utilized in \cite{Casas_OC-NSE} and related works, where the local optimal solution is assumed to be non-singular, enabling the derivation of optimality conditions of {\it Fritz-John} type without imposing smallness assumptions. We aim explore this approach for the proposed problem in the future.

\subsection{Main contributions and outline}\label{Main_Contributions}
We use the well-posedness analysis and regularity results under minimal assumptions on domain geometry and data regularity, that we proved in \cite{DDF} for the governing equation, to establish the results for the uncontrolled state equation in Section \ref{CtsStateEq}. From a control perspective, the new result here is the derivation of continuous dependence on data result in a more regular space, which is proven in Theorem \ref{State.Wp.Reg}. 
Then we show the existence of an optimal control (see Theorem \ref{ExistenceOptimalControl}), Fr\'echet differentiability of the control-to-state map (see Lemma \ref{FrechetDifferentiability}), the formulation of first-order necessary optimality conditions and the well-posedness of the adjoint equation (see Theorem \ref{Fopt_AdjointEqn} and Lemma \ref{EnergyEstAdjoint}), which are discussed in Section \ref{Sec:FirstOrderCondition}. Expanding upon these findings, in Section \ref{Sec:SecondOrderCondition}, we extend the methodology established in \cite{WachsmuthSSC} to establish the local optimality of the control using a second-order sufficient optimality condition to scenarios when Taylor expansion of temperature-dependent viscosity parameter and the nonlinear buoyancy term do not terminate after the second-order term with zero remainder (see Theorem \ref{SSC_thm1}). 
%	Moreover, thanks to the precise estimates of the nonlinear state equation and the corresponding linear equations, we specify how small the residuals should be to satisfy the second-order sufficient optimality condition (see Theorem \ref{SSC_thm2}). 

\subsection{Notations}
Let $\Omega \subset \mathbb{R}^d \; (d = 2, 3)$ be a bounded domain with Lipschitz boundary $\Gamma$. We use the following notations throughout this article: The usual $p^\mathrm{th}$ integrable Lebesgue spaces are denoted by $L^{p}(\Omega)$ with norm $\norm{f}_{L^p(\Omega)}=\left(\int_{\Omega}|f(x)|^pdx\right)^{\frac{1}{p}}$, for $f\in L^p(\Omega)$, $p \in [1,\infty)$.  For $p= \infty$, $L^{\infty}(\Omega)$ is the space of all essentially bounded Lebesgue measurable functions on $\Omega$ with the norm $\norm{f}_{\infty,\Omega} = \esssup_{x \in \Omega} (|f(x)|), $ for $f \in L^{\infty}(\Omega)$. For $p= 2,$ we denote the norm by $\norm{\cdot}_{0,\Omega}$ and $(\cdot, \cdot)$ represents the usual inner product in $L^2(\Omega)$. The space of square integrable functions with zero mean is defined as $L_0^2(\Omega) := \left\{w \in L^2(\Omega): \int_{\Omega} w \; dx = 0\right\}.$
Sobolev spaces are denoted by the standard notation $W^{k,p}(\Omega)$  with the norm $\norm{f}_{W^{k,p}(\Omega)}=\big(\sum_{|\alpha| \leq k}\int_{\Omega} |D^{\alpha} f(x)|^p\big)^{\frac{1}{p}}, \mbox{ if } k \in [1,\infty), 
$ for $f\in W^{k,p}(\Omega)$. For $p = \infty$ we denote the norm by $\norm{f}_{W^{k,\infty}(\Omega)} = \max_{|\alpha| \leq k} \big\{\esssup_{x \in \Omega} |D^{\alpha} f(x)|\big\},$ for $f \in W^{k,\infty}(\Omega).$ Moreover,  for $p = 2,$ we use the notation $W^{k,2}(\Omega) = H^k(\Omega)$ with the corresponding norm denoted by $\norm{\cdot}_{k,\Omega}$ and 
$\boldsymbol{H}_0^1(\Omega) := \left\{\boldsymbol{v} \in \boldsymbol{H}^1(\Omega) : \boldsymbol{v}|_{\Gamma} = \boldsymbol{0} \right\},$ 
where the vector valued functions in dimension $d$ are denoted by bold face and $\boldsymbol{v}|_{\Gamma} = \boldsymbol{0}$ is understood in the sense of trace. The dual space of $\boldsymbol{H}_0^1(\Omega)$ is denoted by $\boldsymbol{H}^{-1}(\Omega)$ with the following norm:
$\norm{\boldsymbol{u}}_{-1,\Omega} := \sup_{0 \neq \boldsymbol{v} \in \boldsymbol{H}_0^1(\Omega)} \frac{\langle \boldsymbol{u}, \boldsymbol{v} \rangle}{~~\norm{\boldsymbol{v}}_{1,\Omega}},$
where $\langle \cdot, \cdot \rangle$  denotes the duality pairing between $\boldsymbol{H}_0^1(\Omega)$ and $\boldsymbol{H}^{-1}(\Omega).$
%We will also need $ \boldsymbol{H}(\bdiv; \Omega) := \{\boldsymbol{v} \in \boldsymbol{L}^2(\Omega): \bdiv \boldsymbol{v} \in L^2(\Omega)\},$ a  vector-valued Hilbert space endowed with the norm $\norm{\boldsymbol{w}}^2_{\bdiv, \Omega} := \norm{\boldsymbol{w}}^2_{0,\Omega} + \norm{\bdiv\; \boldsymbol{w}}^2_{0,\Omega}.$ The outward normal on $\Gamma$ is denoted by $\boldsymbol{n}_{\Gamma}$. 
The duality product between a space $\boldsymbol{V}$ and its dual $\boldsymbol{V}'$ is denoted by $\langle \boldsymbol{f}, \boldsymbol{v}\rangle,$ where $\boldsymbol{f} \in \boldsymbol{V}'$ and $\boldsymbol{v} \in \boldsymbol{V}.$
In the duality product $\langle \boldsymbol{f}, \boldsymbol{v} \rangle$, if $\boldsymbol{f}$ is replaced by a function $\boldsymbol{U} \in \boldsymbol{L}^r(\Omega),$ then $\langle f,\boldsymbol{v} \rangle = \left(\boldsymbol{U}, \boldsymbol{v}\right)_{r,r'}$ such that $1/r + 1/r' = 1,$ for all $\boldsymbol{v} \in \boldsymbol{V} \cap \boldsymbol{L}^{r'}(\Omega)$. Throughout the article $C$ will denote a generic positive constant. We will also use the following  fractional form of the Gagliardo-Nirenberg  inequality (see (2.6) in \cite{kumar2021large}; \cite{FractionalGagliardoNirenberg}) is used in the sequel:
Fix $1 \leq r_2, r_1 \leq \infty $ and $d \in \mathbb{N}. $ Let $\theta \in \mathbb{R} $ and $l \in \mathbb{R}^+ $ such that
\begin{align*}
	\begin{aligned}
		\frac{1}{p_1} = \frac{l}{d} + \theta \left(\frac{1}{r_1} - \frac{k}{d}\right) + \frac{1-\theta}{r_2}, \  \frac{l}{k} \leq \theta \leq 1, 
	\end{aligned}
\end{align*}
then we have
\begin{align}\label{FractionalGagliardoNirenberg}
	\begin{aligned}
		\|D^{l} \boldsymbol{u}\|_{L^{p_1}(\Omega)} \leq C_{gn} \|D^{k} \boldsymbol{u}\|^{\theta}_{L^{r_1}(\Omega)} \norm{\boldsymbol{u}}_{L^{r_2}(\Omega)}^{1-\theta} \; \forall \; \boldsymbol{u} \in \boldsymbol{W}^{k,r_1}(\Omega).
	\end{aligned}
\end{align}
We will also invoke the following fractional Sobolev embedding (see for reference Theorem 4.57 in \cite{FractionalSobolevEmbeddings}) in some proofs,
\begin{align}\label{FractionalSobolevEmbedding}
	\begin{aligned}
		\mbox{ if } \; l r_1 < d, \; \mbox{ then } \;  \boldsymbol{W}^{l, r_1}(\Omega) \hookrightarrow \boldsymbol{L}^{p_2}(\Omega) \;  \mbox{ for every} \; p_2 \leq d r_1 / (d - l r_1).
	\end{aligned}
\end{align}

\section{The governing equation}\label{CtsStateEq}
The variational formulation of the governing or state equation \eqref{P:GE} is obtained by testing against suitable functions and integrating by parts, and can be formulated as follows:\\

For some $\boldsymbol{U} \in \boldsymbol{L}^r(\Omega)$, find $(\boldsymbol{u},p,\boldsymbol{y}) \in \boldsymbol{H}_0^1(\Omega) \times L_0^2(\Omega) \times [H^1(\Omega)]^2$ satisfying $\boldsymbol{y} = \boldsymbol{y}^D$ on $\Gamma$ and 
\begin{align}\label{P:S}
	\left\{
	\begin{aligned}
		a(\boldsymbol{y}; \boldsymbol{u}, \boldsymbol{v}) + c(\boldsymbol{u}, \boldsymbol{u}, \boldsymbol{v}) + b(\boldsymbol{v},p) &= d(\boldsymbol{y},\boldsymbol{v}) + (\boldsymbol{U},\boldsymbol{v})_{r,r'}  \;\; \forall \;\; \boldsymbol{v} \in \boldsymbol{H}_0^1(\Omega), \\
		b(\boldsymbol{u},q) &= 0 \;\; \forall \;\; q \in L_0^2(\Omega), \\
		a_{\boldsymbol{y}}(\boldsymbol{y},\boldsymbol{s}) + c_{\boldsymbol{y}}(\boldsymbol{u},\boldsymbol{y},\boldsymbol{s}) &= 0 \;\; \forall \;\; \boldsymbol{s} \in [H_0^1(\Omega)]^2. 
	\end{aligned}
	\right.
\end{align}
Standard Sobolev embeddings indicate:
\begin{align}\label{H1embedding}
	\begin{aligned}
		&\mbox{For}\; r' \in [1,\infty) \; \mbox{if} \; d = 2 \;  \mbox{and} \; r' \in [1, 6] \;  \mbox{if} \; d = 3, \; \mbox{there exists} \; C_{r'_d} > 0 \\ &\mbox{such that} \; \norm{\boldsymbol{w}}_{L^{r'}(\Omega)} \leq C_{r'_d} \norm{\boldsymbol{w}}_{1,\Omega}, \; \mbox{for all} \; \boldsymbol{w} \in \boldsymbol{H}^1(\Omega).
	\end{aligned}
\end{align}
Thus $\boldsymbol{v} \in \boldsymbol{H}_0^1(\Omega)$ implies $\boldsymbol{v} \in \boldsymbol{L}^{r'}(\Omega)$ provided the embedding \eqref{H1embedding} is satisfied. Moreover, as a consequence of \eqref{H1embedding} and due to \cite[Lemma 3.3]{DDF} we have some  constraints on $r$ and in the rest of the article we fix the range of $r$ as follows:
\begin{align}\label{r_range}
	1 < r < \infty \;\; \mbox{if} \;\; d = 2, \;\; \mbox{or} \;\; \frac{6}{5} \leq r \leq 6 \;\; \mbox{if} \;\; d = 3.
\end{align}

The involved forms in \eqref{P:S} are defined as follows:
\begin{align*}
	&a(\boldsymbol{y}; \boldsymbol{u}, \boldsymbol{v}) := (\boldsymbol{K}^{-1} \boldsymbol{u}, \boldsymbol{v}) + (\nu(\boldsymbol{y}) \nabla \boldsymbol{u}, \nabla \boldsymbol{v}), \;\; c(\boldsymbol{w},\boldsymbol{u},\boldsymbol{v}) := ((\boldsymbol{w} \cdot \nabla) \boldsymbol{u}, \boldsymbol{v}), \;\; b(\boldsymbol{v},q) := - (q, \bdiv \boldsymbol{v}), \\
	& d(\boldsymbol{s},\boldsymbol{v}) := (\boldsymbol{F}(\boldsymbol{s}),\boldsymbol{v}), \;\; a_{\boldsymbol{y}}(\boldsymbol{y},\boldsymbol{s}) := (\boldsymbol{D} \nabla\boldsymbol{y}, \nabla \boldsymbol{s}), \;\; c_{\boldsymbol{y}}(\boldsymbol{v},\boldsymbol{y},\boldsymbol{s}) := ((\boldsymbol{v}\cdot\nabla)\boldsymbol{y},\boldsymbol{s}),
\end{align*}
where, $\nu(\boldsymbol{y})$ is understood as the kinematic viscosity depending only on the first component of $\boldsymbol{y}$. 
Now we state some results for the governing equation, which are directly applicable via following \cite{DDF} with only minor modifications.

%\subsection{Properties of the operators}\label{Sec:SP}

\begin{lemma}[Boundedness (cf. \cite{DDF}{, Section 2.1})]\label{ContinuityProp}
	For all $\boldsymbol{u}, \boldsymbol{v}, \boldsymbol{w} \in \boldsymbol{H}^1(\Omega)$, $q \in L^2(\Omega)$, \\and $\boldsymbol{y}, \boldsymbol{s} \in [H^1(\Omega)]^2,$ there holds
	\begin{align*}
		&|a(\boldsymbol{y};\boldsymbol{u},\boldsymbol{v})|  \leq C_a\|\boldsymbol{u}\|_{1,\Omega}\|\boldsymbol{v}\|_{1,\Omega}, \;\; |a_{\boldsymbol{y}}(\boldsymbol{y},\boldsymbol{s})| \leq \hat{C}_a \norm{\nabla \boldsymbol{y}}_{0,\Omega} \norm{\nabla \boldsymbol{s}}_{0,\Omega},  \\
		&|b(\boldsymbol{v},q)| \leq \sqrt{d} \norm{q}_{0,\Omega} \norm{\nabla\boldsymbol{v}}_{0,\Omega}, \;\; |d(\boldsymbol{y},\boldsymbol{v})| \leq C_{F} \norm{\boldsymbol{y}}_{0,\Omega} \norm{\boldsymbol{v}}_{0,\Omega}, \;\; 	|(\boldsymbol{U},\boldsymbol{v})_{r,r'}| \leq  \norm{\boldsymbol{U}}_{L^{r}(\Omega)} \norm{\boldsymbol{v}}_{L^{r'}(\Omega)}, \\
		&|c(\boldsymbol{w},\boldsymbol{u},\boldsymbol{v})| \leq C_{6_d}  C_{3_d} \norm{\boldsymbol{w}}_{1,\Omega} \norm{\nabla \boldsymbol{u}}_{0,\Omega} \norm{\boldsymbol{v}}_{1,\Omega}, \;\; |c_{\boldsymbol{y}}(\boldsymbol{w},\boldsymbol{y},\boldsymbol{s})| \leq C_{6_d}C_{3_d} \norm{\boldsymbol{w}}_{1,\Omega}  \norm{\nabla \boldsymbol{y}}_{0,\Omega} \norm{\boldsymbol{s}}_{1, \Omega}. \\
	\end{align*} 
\end{lemma}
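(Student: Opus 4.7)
The estimates are standard continuity bounds obtained by combining the structural assumptions from Section \ref{Assum} with Cauchy--Schwarz, the generalized H\"older inequality, and the Sobolev embedding \eqref{H1embedding}. The plan is to handle the bilinear pieces first and then the two convective trilinear forms in a single unified step.

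For the form $a(\boldsymbol{y};\boldsymbol{u},\boldsymbol{v})$, I would split into the permeability and viscous contributions. Symmetry and uniform positive-definiteness of $\boldsymbol{K}$ yield an $L^\infty(\Omega)$-bound on $\boldsymbol{K}^{-1}$, so $|(\boldsymbol{K}^{-1}\boldsymbol{u},\boldsymbol{v})|$ is controlled by $\|\boldsymbol{K}^{-1}\|_{L^\infty(\Omega)}\|\boldsymbol{u}\|_{0,\Omega}\|\boldsymbol{v}\|_{0,\Omega}$ via Cauchy--Schwarz; the viscous part uses $\nu(T)\leq \nu_2$ pointwise and Cauchy--Schwarz on $(\nabla\boldsymbol{u},\nabla\boldsymbol{v})$. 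Summing and absorbing the norms into $\|\cdot\|_{1,\Omega}$ produces the constant $C_a$. An identical one-line argument using a spectral bound on the constant matrix $\boldsymbol{D}$ yields the estimate for $a_{\boldsymbol{y}}$ with constant $\hat{C}_a$. For $b(\boldsymbol{v},q)$, the finite-dimensional inequality $|\bdiv\boldsymbol{v}|=\bigl|\sum_{i=1}^d\partial_i v_i\bigr|\leq \sqrt{d}\,|\nabla\boldsymbol{v}|$ followed by $L^2$--$L^2$ Cauchy--Schwarz delivers exactly the factor $\sqrt{d}$. For $d(\boldsymbol{y},\boldsymbol{v})$, apply the growth bound $|\boldsymbol{F}(\boldsymbol{y})|\leq C_F|\boldsymbol{y}|$ pointwise and then Cauchy--Schwarz; for the duality pairing with $\boldsymbol{U}$, apply H\"older with exponents $r,r'$.

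For the trilinear convective terms, I would apply the generalized H\"older inequality with exponents $(3,2,6)$ to write
\begin{equation*}
|c(\boldsymbol{w},\boldsymbol{u},\boldsymbol{v})| \;\leq\; \|\boldsymbol{w}\|_{L^3(\Omega)}\,\|\nabla\boldsymbol{u}\|_{0,\Omega}\,\|\boldsymbol{v}\|_{L^6(\Omega)},
\end{equation*}
and then invoke \eqref{H1embedding} twice, once with $r'=3$ and once with $r'=6$, which are admissible for both $d=2$ and $d=3$, to absorb the $L^3$ and $L^6$ norms into $\|\cdot\|_{1,\Omega}$, producing the constants $C_{3_d}$ and $C_{6_d}$. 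The estimate for $c_{\boldsymbol{y}}$ is syntactically identical after relabelling the arguments, since the scalar/vector structure of the entries plays no role in the H\"older--Sobolev chain.

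\emph{Main obstacle.} There is essentially none: this is a routine toolbox lemma. The only point that warrants attention is the verification that $r'=6$ lies in the admissible range of \eqref{H1embedding} when $d=3$ (it is precisely the borderline case), so that the constant $C_{6_d}$ is indeed available in both dimensions; the rest of the argument is bookkeeping of constants.
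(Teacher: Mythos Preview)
Your proposal is correct and follows the standard route; the paper itself does not prove this lemma at all but simply cites it from the authors' companion paper \cite{DDF}, so there is no in-paper proof to compare against. Your H\"older split $(3,2,6)$ for the trilinear forms is exactly what produces the constants $C_{3_d}$ and $C_{6_d}$ appearing in the statement, and the remaining estimates are, as you say, bookkeeping.
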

Next, Poincar\'e's inequality implies that the bilinear forms $a(\cdot;\cdot,\cdot)$ (for a fixed temperature) and $a_{\boldsymbol{y}}(\cdot,\cdot)$ are coercive, that is,
\begin{align*}\label{Coer:a_ay}
	a(\cdot;\boldsymbol{v},\boldsymbol{v}) 
	\geq \alpha_a \norm{\boldsymbol{v}}^2_{1,\Omega} \;\; \forall \;\; \boldsymbol{v} \in \boldsymbol{H}_0^1(\Omega), \;\mbox{ and }\;
	a_{\boldsymbol{y}}(\boldsymbol{y},\boldsymbol{y}) \geq \alpha_2 \norm{\nabla \boldsymbol{y}}^2_{0,\Omega} \;\; \forall \;\; \boldsymbol{y} \in [H^1(\Omega)]^2. 
\end{align*}
Let $\boldsymbol{X}:= \left\{\boldsymbol{v} \in \boldsymbol{H}_0^1(\Omega): b(\boldsymbol{v},q) =  0\; \forall \; q \in L_0^2(\Omega)\right\} = \left\{\boldsymbol{v} \in \boldsymbol{H}_0^1(\Omega) : \bdiv\hspace{0.025cm}\boldsymbol{v} = 0 \;\; \mbox{in} \;\; \Omega\right\}.$
Then for all $\boldsymbol{w} \in \boldsymbol{X}$ and $\boldsymbol{v} \in \boldsymbol{H}_0^1(\Omega)$, the following properties of the trilinear form hold:
\begin{align}\label{Prop:TrilinearForm}
	c(\boldsymbol{w},\boldsymbol{v},\boldsymbol{v}) = 0, \;\; c(\boldsymbol{w}, \boldsymbol{u}, \boldsymbol{v}) = -c(\boldsymbol{w}, \boldsymbol{v}, \boldsymbol{u}), \;\mbox{and}\; c(\boldsymbol{u}, \boldsymbol{v}, \boldsymbol{w}) = ((\nabla \boldsymbol{v})^\top \boldsymbol{w}, \boldsymbol{u}).
\end{align}

%\subsection{Well-posedness and regularity}

\begin{lemma}[Equivalence (cf. \cite{DDF}{, Lemma 3.2})]\label{State:equivalence}
	If $(\boldsymbol{u},p,\boldsymbol{y}) \in \boldsymbol{H}_0^1(\Omega) \times L_0^2(\Omega) \times [H^1(\Omega)]^2$ solves \eqref{P:S} for all $ \boldsymbol{U} \in \boldsymbol{L}^r(\Omega)$ under \eqref{r_range}, then $(\boldsymbol{u},\boldsymbol{y}) \in \boldsymbol{X} \times [H^1(\Omega)]^2$ satisfies $\boldsymbol{y}|_{\Gamma} = \boldsymbol{y}^D$ and
	\begin{align}\label{P:Sred}
		\left\{
		\begin{aligned}
			a(\boldsymbol{y};\boldsymbol{u},\boldsymbol{v}) + c(\boldsymbol{u}, \boldsymbol{u}, \boldsymbol{v}) - d(\boldsymbol{y}, \boldsymbol{v}) - (\boldsymbol{U},\boldsymbol{v})_{r,r'} &= 0 \;\; \forall \;\; \boldsymbol{v} \in \boldsymbol{X},\\
			a_{\boldsymbol{y}}(\boldsymbol{y},\boldsymbol{s}) + c_{\boldsymbol{y}}(\boldsymbol{u}, \boldsymbol{y}, \boldsymbol{s}) &= 0 \;\; \forall \;\; \boldsymbol{s} \in [H_0^1(\Omega)]^2.
		\end{aligned}
		\right.
	\end{align}
	Conversely, if $(\boldsymbol{u},\boldsymbol{y}) \in \boldsymbol{X} \times [H^1(\Omega)]^2$ is a solution of the reduced problem \eqref{P:Sred}, then for some $\boldsymbol{U} \in \boldsymbol{L}^r(\Omega),$ there exists a $p \in L_0^2(\Omega)$ such that $(\boldsymbol{u},p,\boldsymbol{y})$ is a solution of \eqref{P:S}.
\end{lemma}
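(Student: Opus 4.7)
The plan is to prove the two directions of the equivalence separately, the forward one being almost immediate and the converse relying on the standard pressure-recovery argument via De Rham's theorem (equivalently, the inf-sup/LBB condition for $b(\cdot,\cdot)$ on $\boldsymbol{H}_0^1(\Omega) \times L_0^2(\Omega)$).

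For the forward direction, suppose $(\boldsymbol{u},p,\boldsymbol{y})$ solves \eqref{P:S}. The second equation says $(q,\bdiv\boldsymbol{u})=0$ for all $q\in L_0^2(\Omega)$, which after the usual argument about the mean-value component of $\bdiv\boldsymbol{u}$ (it vanishes automatically because $\boldsymbol{u}\in\boldsymbol{H}_0^1(\Omega)$ via the divergence theorem) yields $\bdiv\boldsymbol{u}=0$ in $\Omega$, so $\boldsymbol{u}\in\boldsymbol{X}$. Restricting the first equation of \eqref{P:S} to test functions $\boldsymbol{v}\in\boldsymbol{X}\subset\boldsymbol{H}_0^1(\Omega)$ kills the pressure term because $b(\boldsymbol{v},p)=-(p,\bdiv\boldsymbol{v})=0$, leaving exactly the first equation of \eqref{P:Sred}. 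The third equation of \eqref{P:S} carries over verbatim.

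For the converse, let $(\boldsymbol{u},\boldsymbol{y})\in\boldsymbol{X}\times [H^1(\Omega)]^2$ solve \eqref{P:Sred}. Define the linear functional $\mathcal{L}:\boldsymbol{H}_0^1(\Omega)\to\mathbb{R}$ by
\[
\langle\mathcal{L},\boldsymbol{v}\rangle := a(\boldsymbol{y};\boldsymbol{u},\boldsymbol{v}) + c(\boldsymbol{u},\boldsymbol{u},\boldsymbol{v}) - d(\boldsymbol{y},\boldsymbol{v}) - (\boldsymbol{U},\boldsymbol{v})_{r,r'}.
\]
Its continuity on $\boldsymbol{H}_0^1(\Omega)$ is a consequence of Lemma \ref{ContinuityProp}: the diffusive/Darcy, convective, and buoyancy terms are handled by the bounds stated there, while $|(\boldsymbol{U},\boldsymbol{v})_{r,r'}|\le \|\boldsymbol{U}\|_{L^r(\Omega)}\|\boldsymbol{v}\|_{L^{r'}(\Omega)} \le C_{r'_d}\|\boldsymbol{U}\|_{L^r(\Omega)}\|\boldsymbol{v}\|_{1,\Omega}$ using the Sobolev embedding \eqref{H1embedding}, which is applicable precisely in the range \eqref{r_range}. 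So $\mathcal{L}\in\boldsymbol{H}^{-1}(\Omega)$. By the first equation of \eqref{P:Sred}, $\langle\mathcal{L},\boldsymbol{v}\rangle=0$ for every $\boldsymbol{v}\in\boldsymbol{X}$, i.e.\ $\mathcal{L}$ annihilates the kernel of $\bdiv$ on $\boldsymbol{H}_0^1(\Omega)$.

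At this point the main ingredient is the De Rham theorem on Lipschitz domains (equivalently, the closed range / inf-sup property of $b(\cdot,\cdot)$), which yields the existence of a unique $p\in L_0^2(\Omega)$ such that $\langle\mathcal{L},\boldsymbol{v}\rangle = -b(\boldsymbol{v},p) = (p,\bdiv\boldsymbol{v})$ for all $\boldsymbol{v}\in\boldsymbol{H}_0^1(\Omega)$. Substituting back into the definition of $\mathcal{L}$ recovers the first equation of \eqref{P:S}, the identity $\bdiv\boldsymbol{u}=0$ gives the second equation for all $q\in L_0^2(\Omega)$, and the third equation of \eqref{P:Sred} is the same as the third equation of \eqref{P:S}. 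Hence $(\boldsymbol{u},p,\boldsymbol{y})$ solves \eqref{P:S}. The main obstacle, and the only nontrivial input, is the availability of De Rham's theorem on the bounded Lipschitz domain $\Omega$; this is classical and is the reason the Lipschitz hypothesis on $\Gamma$ is sufficient rather than needing additional smoothness.
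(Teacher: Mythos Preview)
Your proof is correct and follows exactly the approach the paper has in mind: the lemma is not proved in this paper but cited from \cite{DDF}, and the pressure recovery via De Rham's theorem on Lipschitz domains is precisely the mechanism the authors invoke (see Remark~\ref{adjointRegularity}, where they recover the adjoint pressure in the same way). The forward direction and the functional-analytic argument for the converse are the standard ones and match the cited result.
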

%\begin{proof}
%	We can prove this result by choosing $\langle f,\boldsymbol{v} \rangle = -a(\boldsymbol{y};\boldsymbol{u},\boldsymbol{v}) - c(\boldsymbol{u}, \boldsymbol{u}, \boldsymbol{v}) + d(\boldsymbol{y}, \boldsymbol{v}) + (\boldsymbol{U},\boldsymbol{v})_{r,r'}$ and following same steps as in the proof of \cite[Lemma 3.2]{DDF}.
%\end{proof}

%A lifting argument is used to deal with the non-homogeneous Dirichlet data appearing in the advection-diffusion equations. 
We write $\boldsymbol{y}$ using a lifting argument as $\boldsymbol{y} = \boldsymbol{y}_0 + \boldsymbol{y}_1$, where $\boldsymbol{y}_0 \in [H^1_0(\Omega)]^2$ and $\boldsymbol{y}_1$ is such that
\begin{eqnarray}\label{lifting}
	\boldsymbol{y}_1 \in [H^1(\Omega)]^2 \;\; \mbox{with} \;\; \boldsymbol{y}_1|_{\Gamma} = \boldsymbol{y}^D.
\end{eqnarray}

\begin{lemma}[Well-posedness and regularity]\label{State.Wp.Reg}
		For every $\bs{U} \in \bs{L}^r(\Omega),$ $r$ satisfying \eqref{r_range}  and $\bs{y}^D \in [H^{1 + \delta}(\Gamma)]^2$ there exists a lifting $\bs{y}_1 \in [H^1(\Omega)]^2$ satisfying \eqref{lifting} such that the problem \eqref{P:Sred} has a unique \emph{weak solution} $(\bs{u},\bs{y}) \in [\bs{X} \cap \bs{H}^{3/2 + \delta}(\Omega)] \times [H^1(\Omega) \cap H^{3/2 + \delta}(\Omega)]^2, \delta \in (0,1/2]$ under the following smallness assumption: 
	\begin{align}\label{smalldata}
		\begin{aligned}
			&\alpha_a > C_{6_d} C_{3_d} ((\gamma_{\nu} C_{p_{2_d}} C_{gn} M M_{\bs{y}})/{\hat{\alpha}_a} + M_{\bs{u}} + (\gamma_F M_{\bs{y}})/\hat{\alpha}_a),
		\end{aligned}
	\end{align}	 
	where, $M_{\bs{u}} := C_{\bs{u}} (\|\bs{y}^D\|_{1/2,\Gamma} + \|\bs{U}\|_{L^{r}(\Omega)})$ and $M_{\bs{y}} := C_{\bs{y}} (\|\bs{y}^D\|_{1/2,\Gamma} +  \|\bs{U}\|_{L^{r}(\Omega)})$.  We also have the following bounds:
	\begin{align}\label{MuMy}
		\begin{aligned}
			&\|\bs{u}\|_{1,\Omega} \leq M_{\bs{u}}, \|\bs{y}\|_{1,\Omega} \leq  M_{\bs{y}},  \mbox{ and }\norm{\bs{u}}_{3/2+\delta,\Omega} + \norm{\bs{y}}_{3/2+\delta,\Omega} \leq M,
		\end{aligned}
	\end{align}
	where, $M$ depends on the data  $\|\bs{U}\|_{L^r(\Omega)}$ and $\|\bs{y}^D\|_{1+\delta,\Gamma}$. Furthermore, if $(\bs{u}_i, \bs{y}_i) \in \bs{X} \cap \bs{H}^{3/2 + \delta}(\Omega) \times \left[H^1(\Omega) \cap H^{3/2 + \delta}(\Omega)\right]^2$ are weak solutions of \eqref{P:Sred} corresponding to $\bs{U}_i \in \bs{L}^r(\Omega),$ for $i = 1, 2$ and $\bs{y}^D  \in \left[H^{1+\delta}(\Gamma)\right]^2$, then under \eqref{smalldata} we have the following continuous dependence of solutions on the data:
	\begin{align}
			\norm{\bs{u}_1 - \bs{u}_2}_{1,\Omega} + \norm{\bs{y}_1 - \bs{y}_2}_{1,\Omega}  &\leq C \norm{\bs{U}_1 - \bs{U}_2}_{L^r(\Omega)},\label{stabilityH1}\\			
			\norm{\boldsymbol{u}_1 - \boldsymbol{u}_2}_{3/2 + \delta,\Omega} + \norm{\boldsymbol{y}_1 - \boldsymbol{y}_2}_{3/2 + \delta, \Omega} &\leq C \norm{\boldsymbol{U}_1 - \boldsymbol{U}_2}_{L^r(\Omega)}.\label{stabilityH3/2}
	\end{align}
	where $C$ (possibly not the same) is a generic positive constant such that \eqref{smalldata} holds.
\end{lemma}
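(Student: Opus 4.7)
The plan is to treat this lemma in four stages, leaning on \cite{DDF} for the parts that transfer directly and focusing effort on the genuinely new continuous dependence bound \eqref{stabilityH3/2}.

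\textbf{Existence, uniqueness and $H^1$ bounds.} I would set up a fixed-point map $\mathcal{T}: \boldsymbol{X} \times [H^1_0(\Omega)]^2 \to \boldsymbol{X} \times [H^1_0(\Omega)]^2$ by fixing $(\tilde{\boldsymbol{u}},\tilde{\boldsymbol{y}}_0)$ and then solving first the linear convection-diffusion system for $\boldsymbol{y}_0 = \boldsymbol{y} - \boldsymbol{y}_1$ (with $\boldsymbol{y}_1$ the $H^1$-lifting of $\boldsymbol{y}^D$ coming from the standard trace theorem) and then the linearised generalised Navier--Stokes problem with viscosity $\nu(\tilde{\boldsymbol{y}})$ and convection $(\tilde{\boldsymbol{u}} \cdot \nabla)\boldsymbol{u}$. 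Testing the first block with $\boldsymbol{y}_0$ and the second with $\boldsymbol{u}$, invoking the coercivity of $a(\cdot;\cdot,\cdot)$ and $a_{\boldsymbol{y}}(\cdot,\cdot)$, the skew-symmetry property \eqref{Prop:TrilinearForm}, and the embedding \eqref{H1embedding}, yields the bounds $\|\boldsymbol{u}\|_{1,\Omega} \le M_{\boldsymbol{u}}$ and $\|\boldsymbol{y}\|_{1,\Omega} \le M_{\boldsymbol{y}}$ for every fixed point, and simultaneously shows that a ball of appropriate radius is invariant. The smallness condition \eqref{smalldata} is then used to prove $\mathcal{T}$ is a strict contraction on that ball (combining the Lipschitz continuity of $\nu$, of $\boldsymbol{F}$, and the continuity of the trilinear forms from Lemma \ref{ContinuityProp}), so Banach's fixed-point theorem gives existence and uniqueness; this is exactly the structure of \cite{DDF} and requires only minor tweaks to account for the $\boldsymbol{L}^r$ forcing $\boldsymbol{U}$.

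\textbf{Higher regularity.} Treating the coupled system as two elliptic problems whose right-hand sides live in dual spaces of known regularity, I would apply the Lipschitz-domain regularity results for the Stokes system with variable viscosity and for the second-order convection-diffusion equation (of Jerison--Kenig/Savar\'e type) that were employed in \cite{DDF}. The forcing for the Stokes block is $\boldsymbol{F}(\boldsymbol{y}) + \boldsymbol{U} - \boldsymbol{K}^{-1}\boldsymbol{u} - (\boldsymbol{u}\cdot\nabla)\boldsymbol{u}$; using the already established $\boldsymbol{H}^1$-bound together with \eqref{FractionalGagliardoNirenberg} and \eqref{FractionalSobolevEmbedding} to control the convective and buoyancy terms, and using $\boldsymbol{y}^D \in [H^{1+\delta}(\Gamma)]^2$ to upgrade the lifting $\boldsymbol{y}_1$ to $[H^{3/2+\delta}(\Omega)]^2$, one obtains the $H^{3/2+\delta}$ bound $M$ on the full pair.

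\textbf{The $H^1$ stability \eqref{stabilityH1}.} I would subtract the two versions of \eqref{P:Sred}, denote $\boldsymbol{w} := \boldsymbol{u}_1 - \boldsymbol{u}_2$, $\boldsymbol{z} := \boldsymbol{y}_1 - \boldsymbol{y}_2$, and test the velocity equation with $\boldsymbol{w}$ and the temperature/concentration equation with $\boldsymbol{z}$ (both zero on $\Gamma$ since the Dirichlet data cancels). The difference of viscosities is handled by writing $\nu(\boldsymbol{y}_1)\nabla \boldsymbol{u}_1 - \nu(\boldsymbol{y}_2)\nabla \boldsymbol{u}_2 = \nu(\boldsymbol{y}_2)\nabla\boldsymbol{w} + (\nu(\boldsymbol{y}_1)-\nu(\boldsymbol{y}_2))\nabla\boldsymbol{u}_1$ and bounding the second piece using the $\gamma_\nu$-Lipschitz property together with the Gagliardo--Nirenberg-type estimate and the $H^{3/2+\delta}$ a priori bound on $\boldsymbol{u}_1$. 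The convective differences give $c(\boldsymbol{w},\boldsymbol{u}_1,\boldsymbol{w})$ and $c_{\boldsymbol{y}}(\boldsymbol{w},\boldsymbol{y}_1,\boldsymbol{z})$ after using skew-symmetry, both absorbed in coercivity thanks to \eqref{smalldata}. What remains on the right is $(\boldsymbol{U}_1-\boldsymbol{U}_2,\boldsymbol{w})_{r,r'}$, controlled by $\|\boldsymbol{U}_1-\boldsymbol{U}_2\|_{L^r(\Omega)}\|\boldsymbol{w}\|_{1,\Omega}$ via \eqref{H1embedding}, which after rearranging gives \eqref{stabilityH1}.

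\textbf{The $H^{3/2+\delta}$ stability \eqref{stabilityH3/2} — the main new step.} I would view the difference $(\boldsymbol{w},\boldsymbol{z})$ as solving a linear Stokes plus convection-diffusion system, and apply the same Lipschitz-domain elliptic regularity as in stage two but now to this difference system. The right-hand side of the Stokes part is $\boldsymbol{U}_1-\boldsymbol{U}_2 + [\boldsymbol{F}(\boldsymbol{y}_1)-\boldsymbol{F}(\boldsymbol{y}_2)] - \boldsymbol{K}^{-1}\boldsymbol{w} - [(\boldsymbol{u}_1\cdot\nabla)\boldsymbol{u}_1-(\boldsymbol{u}_2\cdot\nabla)\boldsymbol{u}_2] + \boldsymbol{\bdiv}((\nu(\boldsymbol{y}_1)-\nu(\boldsymbol{y}_2))\nabla\boldsymbol{u}_1)$, and similarly for the $\boldsymbol{z}$-equation. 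Each of these pieces is estimated in the appropriate negative Sobolev norm by products of $H^{3/2+\delta}$-norms of the $\boldsymbol{u}_i,\boldsymbol{y}_i$ (bounded by $M$) and $H^1$-norms of $\boldsymbol{w},\boldsymbol{z}$, using Lipschitz continuity of $\nu,\boldsymbol{F}$ together with \eqref{FractionalGagliardoNirenberg}--\eqref{FractionalSobolevEmbedding}. Feeding in \eqref{stabilityH1} to dominate those $H^1$ factors by $\|\boldsymbol{U}_1-\boldsymbol{U}_2\|_{L^r(\Omega)}$ closes the estimate. The main technical obstacle I expect is precisely here: keeping track of the fractional Sobolev exponents so that each nonlinear difference sits in the dual of the right target space, and ensuring the products of $M$-type constants do not violate \eqref{smalldata} when absorbed into the left-hand side; this will require careful case distinction between $d=2$ and $d=3$ in the embedding \eqref{H1embedding} and in the admissible range of $r$ from \eqref{r_range}.
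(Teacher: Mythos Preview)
Your overall four-stage plan matches the paper's approach closely: the first three stages are essentially what the paper does (citing \cite{DDF} for existence, regularity, and uniqueness, and deriving \eqref{stabilityH1} by testing the difference equation and absorbing via \eqref{smalldata}), and your strategy for stage four---treat the difference as a linear elliptic system, estimate the right-hand sides in negative fractional Sobolev norms, then feed in \eqref{stabilityH1}---is exactly the paper's line of attack.

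However, in stage four you underestimate two technical points that the paper has to work hard for. First, Lipschitz continuity of $\nu$ is \emph{not} enough to control the viscosity-difference term $\boldsymbol{\bdiv}((\nu(T_1)-\nu(T_2))\nabla\boldsymbol{u}_2)$ in $H^{-1/2+\delta}$: after a fractional Leibniz rule you are forced to estimate $\|\nu(T_1)-\nu(T_2)\|_{W^{1/2+\delta,q_1}}$, and the paper handles this by writing $\nu(T_1)-\nu(T_2)=\nu_T(\theta T_1+(1-\theta)T_2)(T_1-T_2)$ and applying Leibniz again, which requires $\nu_{TT}\in L^\infty$ (an extra hypothesis beyond Section~\ref{Assum} that the paper explicitly invokes here). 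Your plan, relying only on the Lipschitz bound, would stall at this step. Second, the paper does not go directly to $H^{-1/2+\delta}$ estimates for the velocity difference: it first bounds $\boldsymbol{\mathcal{F}}_1$ in $H^{-1/2-\delta}$ to obtain an intermediate $\|\boldsymbol{u}_1-\boldsymbol{u}_2\|_{3/2-\delta}$ bound, and only then bootstraps to $H^{-1/2+\delta}$; in three dimensions the term $(\nabla\nu(T_1)\cdot\nabla)(\boldsymbol{u}_1-\boldsymbol{u}_2)$ moreover needs an iterative argument over $\delta\in[\tfrac{1}{2(k+1)},\tfrac{1}{2})$ (following \cite[Theorem~3.9]{DDF}). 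Without this intermediate step the right-hand side estimates in 3D do not close. You correctly flagged the exponent bookkeeping as the crux, but these two ingredients---the extra smoothness of $\nu$ and the $H^{3/2-\delta}\to H^{3/2+\delta}$ bootstrap---are the concrete mechanisms you would need to add.
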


\begin{proof}
	The constants $C_{\bs{u}}, C_{\bs{y}}$ in the energy estimates \eqref{MuMy} are trivial to quantify following proof of \cite[Lemma 3.4]{DDF}. The existence and minimal regularity of the weak solution follows from \cite[Theorem 3.6, Theorem 3.9]{DDF}. The minimal regularity result allows to prove the uniqueness under the smallness assumption \eqref{smalldata} following \cite[Theorem 3.12]{DDF}. The stability result \eqref{stabilityH1} can be derived using \eqref{H1embedding}, \eqref{smalldata} with $\bs{\alpha} := \bs{u}_1 - \bs{u_2}$, and  the fact that
	\begin{align*}
		\left[\alpha_a -C_{6_d} C_{3_d} \left(\frac{\gamma_{\nu} C_{p_{2_d}} C_{gn} M M_{\boldsymbol{y}}}{\hat{\alpha}_a} + M_{\boldsymbol{u}} + \frac{\gamma_F M_{\boldsymbol{y}}}{\hat{\alpha}_a}\right)\right]\norm{\boldsymbol{\alpha}}^2_{1,\Omega} \leq \norm{\boldsymbol{U}_1 - \boldsymbol{U}_2}_{L^r(\Omega)}\norm{\boldsymbol{\alpha}}_{L^{\frac{r}{r-1}}(\Omega)}.
	\end{align*} 
	Now we focus our attention on proving the continuous dependence on the data result in a more regular space. 	Consider the following difference equation:
	\begin{align*}
		- \nu(T_1) \Delta(\boldsymbol{u}_1 - \boldsymbol{u}_2) = \boldsymbol{\mathcal{F}}_1, \quad
		- \boldsymbol{\bdiv}(\boldsymbol{D} \nabla(\boldsymbol{y}_1 - \boldsymbol{y}_2)) = \boldsymbol{f}_1,
	\end{align*}
%	\begin{align*}
%		\left\{
%		\begin{aligned}
%			&- \nu(T_1) \Delta(\boldsymbol{u}_1 - \boldsymbol{u}_2) = \boldsymbol{\mathcal{F}}_1, \\
%			&- \boldsymbol{\bdiv}(\boldsymbol{D} \nabla(\boldsymbol{y}_1 - \boldsymbol{y}_2)) = \boldsymbol{f}_1,
%		\end{aligned}
%		\right.
%	\end{align*}
	where,
	\begin{align*}
		\boldsymbol{\mathcal{F}}_1 &:= \boldsymbol{F}(\boldsymbol{y}_1) - \boldsymbol{F}(\boldsymbol{y}_2) + \boldsymbol{U}_1 - \boldsymbol{U}_2 - \boldsymbol{K}^{-1}(\boldsymbol{u}_1 - \boldsymbol{u}_2) - ((\boldsymbol{u}_1 - \boldsymbol{u}_2) \cdot \nabla) \boldsymbol{u}_1 - (\boldsymbol{u}_2 \cdot \nabla)(\boldsymbol{u}_1 - \boldsymbol{u}_2) \\
		&~~~~~- \bdiv((\nu(T_1) - \nu(T_2)) (\nabla \boldsymbol{u}_2) - (\nabla \nu(T_1) \cdot \nabla)(\boldsymbol{u}_2-\boldsymbol{u}_1), \\
		\boldsymbol{f}_1 &:= ((\boldsymbol{u}_2 - \boldsymbol{u}_1) \cdot \nabla)  \boldsymbol{y}_2 + (\boldsymbol{u}_1 \cdot \nabla) (\boldsymbol{y}_2 - \boldsymbol{y}_1).
	\end{align*}
	Our strategy is to make use of  the regularity in \eqref{MuMy} and \eqref{stabilityH1}. To this end, the following terms can be bounded using  fractional-Leibniz rule with $\frac{1}{p_1} + \frac{1}{p_2} = \frac{1}{q_1} + \frac{1}{q_2} = \frac{1}{2}$ as follows:
	\begin{align}
		\norm{((\boldsymbol{u}_2 - \boldsymbol{u}_1) \cdot \nabla) \boldsymbol{y}_2}_{-1/2+\delta,\Omega} &= \norm{\nabla((\boldsymbol{u}_1 - \boldsymbol{u}_2) \otimes \boldsymbol{y}_2)}_{-1/2+\delta,\Omega} \nonumber\\
		&\leq C \norm{(\boldsymbol{u}_1 - \boldsymbol{u}_2) \otimes \boldsymbol{y}_2}_{1/2+\delta,\Omega}  = C  \; \|D^{1/2+\delta}((\boldsymbol{u}_2 - \boldsymbol{u}_1) \otimes \boldsymbol{y}_2)\|_{0,\Omega} \nonumber\\
		&\leq \norm{\boldsymbol{u}_1 - \boldsymbol{u}_2}_{L^{p_1}(\Omega)}  \|D^{1/2+\delta} \boldsymbol{y}_2\|_{L^{p_2}(\Omega)} + \|D^{1/2+\delta}(\boldsymbol{u}_1 - \boldsymbol{u}_2)\|_{L^{q_1}(\Omega)} \norm{\boldsymbol{y}_2}_{L^{q_2}(\Omega)}. \label{y1my2Reg_Eq1}
	\end{align}
	Choosing $p_1 = \frac{4}{1+2 \delta} = q_2, p_2 = \frac{4}{1-2 \delta} = p_1$  in two dimensions and $p_2= \frac{\delta}{3} = q_1, p_1 = \frac{6}{3-2\delta} = q_2$ in three dimensions and applying \eqref{FractionalGagliardoNirenberg} with $\theta = 1, l = \frac{1}{2} - \delta$ in \eqref{y1my2Reg_Eq1} yields
	\begin{align}\label{y1my2Regularity1}
		\norm{((\boldsymbol{u}_2 - \boldsymbol{u}_1) \cdot \nabla) \boldsymbol{y}_2}_{-1/2+\delta,\Omega} \leq C \norm{\boldsymbol{u}_1 - \boldsymbol{u}_2}_{1,\Omega} \norm{\boldsymbol{y}_2}_{1,\Omega}.
	\end{align}
	Following the same steps, we obtain
	\begin{align}\label{y1my2Regularity2}
		\norm{(\boldsymbol{u}_1 \cdot \nabla) (\boldsymbol{y}_2 - \boldsymbol{y}_1)}_{-1/2+\delta,\Omega} &\leq \norm{\boldsymbol{u}_1}_{L^{p_1}(\Omega)}  \|D^{1/2+\delta} (\boldsymbol{y}_2 - \boldsymbol{y}_1)\|_{L^{p_2}(\Omega)} + \|D^{1/2-\delta} \boldsymbol{u}_1 \|_{L^{q_1}(\Omega)} \norm{\boldsymbol{y}_2 - \boldsymbol{y}_1}_{L^{q_2}(\Omega)} \nonumber\\
		&\leq C \norm{\boldsymbol{u}_1}_{1,\Omega} \norm{\boldsymbol{y}_2 - \boldsymbol{y}_1}_{1,\Omega}.
	\end{align}
	Combining \eqref{y1my2Regularity1} and \eqref{y1my2Regularity2}, we arrive at 
	\begin{align}\label{y1my2Regularity3}
		\norm{\boldsymbol{y}_1 - \boldsymbol{y}_2}_{3/2+\delta,\Omega} \leq \norm{\boldsymbol{f}_1}_{-1/2+\delta,\Omega} \leq C \left(\norm{\boldsymbol{u}_1 - \boldsymbol{u}_2}_{1,\Omega} + \norm{\boldsymbol{y}_1 - \boldsymbol{y}_2}_{1,\Omega}\right).
	\end{align}
	%	Furthermore, analogously one  can readily see 
	%	\begin{align}\label{eq!}
		%		\norm{((\boldsymbol{u}_2 - \boldsymbol{u}_1) \cdot \nabla) \boldsymbol{y}_2}_{-1/2+\delta,\Omega} &\leq \norm{\boldsymbol{u}_1 - \boldsymbol{u}_2}_{L^{p_1}(\Omega)}  \|D^{1/2+\delta} \boldsymbol{y}_2\|_{L^{p_2}(\Omega)} \nonumber
		%		\\&~~~~+ \|D^{1/2+\delta}(\boldsymbol{u}_1 - \boldsymbol{u}_2)\|_{L^{q_1}(\Omega)} \norm{\boldsymbol{y}_2}_{L^{q_2}(\Omega)}.
		%	\end{align}
	%	Choosing $p_1 = \frac{4}{1+2 \delta} = q_2, p_2 = \frac{4}{1-2 \delta} = p_1$  in two dimensions and $p_2= \frac{\delta}{3} = q_1, p_1 = \frac{6}{3-2\delta} = q_2$ in three dimensions and applying \eqref{FractionalGagliardoNirenberg} with $\theta = 1, l = \frac{1}{2} - \delta$ in \eqref{y1my2Reg_Eq1} yields
	%	\begin{align*}
		%		\norm{((\boldsymbol{u}_2 - \boldsymbol{u}_1) \cdot \nabla) \boldsymbol{y}_2}_{-1/2+\delta,\Omega} \leq C \norm{\boldsymbol{u}_1 - \boldsymbol{u}_2}_{1,\Omega} \norm{\boldsymbol{y}_2}_{1,\Omega}.
		%	\end{align*}
	% 	Thus,
	%	\begin{align}\label{y1my2Regularity4}
		%		\norm{\boldsymbol{y}_1 - \boldsymbol{y}_2}_{3/2+\delta,\Omega} \leq \norm{\boldsymbol{f}_1}_{-1/2+\delta,\Omega} \leq C \left(\norm{\boldsymbol{u}_1 - \boldsymbol{u}_2}_{1,\Omega} + \norm{\boldsymbol{y}_1 - \boldsymbol{y}_2}_{1,\Omega}\right).
		%	\end{align}
	Due to the Lipschitz continuity of $\boldsymbol{F}(\boldsymbol{y})$ and  assumptions on $\boldsymbol{K}^{-1}$, we get
	\begin{align}
		&\norm{ \boldsymbol{F}(\boldsymbol{y}_1) - \boldsymbol{F}(\boldsymbol{y}_2)}_{{-1/2 - \delta},\Omega} \leq C_e \norm{ \boldsymbol{F}(\boldsymbol{y}_1) - \boldsymbol{F}(\boldsymbol{y}_2)}_{0,\Omega} \leq C_e \gamma_{F} \norm{\boldsymbol{y}_1 - \boldsymbol{y}_2}_{1,\Omega}, \label{main-1}\\
		&\norm{\boldsymbol{K}^{-1}(\boldsymbol{u}_1 - \boldsymbol{u}_2)}_{{-1/2 - \delta},\Omega} \leq C_a \norm{\boldsymbol{u}_1 - \boldsymbol{u}_2}_{1,\Omega}. \label{main-2}
	\end{align}
	
	The terms $((\boldsymbol{u}_1 - \boldsymbol{u}_2) \cdot \nabla) \boldsymbol{u}_1$ and $ (\boldsymbol{u}_2 \cdot \nabla)(\boldsymbol{u}_1 - \boldsymbol{u}_2)$ in $\boldsymbol{\mathcal{F}_1}$ can be bounded similarly to \eqref{y1my2Regularity1} and \eqref{y1my2Regularity2}, respectively using the regularity of $\boldsymbol{u}_1$ and $\boldsymbol{u}_2$.
	The following bounds can be obtained analogously to \cite[Equation (3.25)]{DDF} in two dimensions:
	\begin{align}
		\norm{(\nabla \nu(T_1) \cdot \nabla) (\boldsymbol{u}_2 - \boldsymbol{u_1})}_{-1/2 + \delta, \Omega} \leq C
		\norm{\boldsymbol{u}_1 - \boldsymbol{u}_2}_{1,\Omega} \norm{T_1}_{3/2+\delta,\Omega} \;\; \mbox{in} \;\; \mbox{2D}, \label{main0}
	\end{align} 
	In three dimensions, we employ the iterative technique used in Step 2 of Part II of \cite[Theorem 3.9]{DDF}. Considering $\frac{1}{2(k+1)} \leq \delta < \frac{1}{2}$ for $k = 1, 2, \ldots$, we first show that similar to \cite[Equation (3.32)]{DDF} we have 
	$$\norm{(\nabla \nu(T_1) \cdot \nabla) (\boldsymbol{u}_2 - \boldsymbol{u_1})}_{-1/2-k\delta,\Omega} \leq C \norm{\boldsymbol{u}_1 - \boldsymbol{u}_2}_{1,\Omega} \norm{T_1}_{3/2+\delta,\Omega} \; \mbox{ provided } \; \frac{1}{2} \leq (k+1) \delta.$$ Then using the above relation, one can see that
	$$\norm{(\nabla \nu(T_1) \cdot \nabla) (\boldsymbol{u}_2 - \boldsymbol{u_1})}_{-1/2-(k-1)\delta,\Omega} \leq C \norm{\boldsymbol{u}_1 - \boldsymbol{u}_2}_{3/2-(k-1)\delta,\Omega} \norm{T_1}_{3/2+\delta,\Omega}.$$ Repeating this $(k-1)$ times  lead to 
	\begin{align}\label{eq!!_2}
		\norm{(\nabla \nu(T_1) \cdot \nabla) (\boldsymbol{u}_2 - \boldsymbol{u_1})}_{-1/2-(k-(k-1))\delta,\Omega} \leq C \norm{\boldsymbol{u}_1 - \boldsymbol{u}_2}_{3/2-(k-(k-2))\delta,\Omega} \norm{T_1}_{3/2+\delta,\Omega} \;\; \mbox{in} \;\; \mbox{3D}.
	\end{align}
	Now an application of the fractional Leibniz-rule with $\frac{1}{p_1} + \frac{1}{p_2} = \frac{1}{q_1} + \frac{1}{q_2} = \frac{1}{2}$ gives
	\begin{align}
		\norm{\bdiv((\nu(T_1) - \nu(T_2)) \nabla \boldsymbol{u}_2)}_{-1/2-\delta,\Omega} &\leq \sqrt{d}\norm{(\nu(T_1) - \nu(T_2)) \nabla \boldsymbol{u}_2}_{1/2-\delta,\Omega} 	\nonumber\\
		&\leq C \norm{D^{1/2-\delta} ((\nu(T_1) - \nu(T_2)) \nabla \boldsymbol{u}_2)}_{0,\Omega} \nonumber\\
		&\leq C \left(\norm{\nu(T_1) - \nu(T_2)}_{L^{p_1}(\Omega)} \norm{\nabla \boldsymbol{u}_2}_{W^{1/2-\delta,p_2}(\Omega)} \right. \nonumber\\ &~~~~~\left.+  \norm{\nu(T_1) - \nu(T_2)}_{W^{1/2-\delta,q_1}(\Omega)} \norm{\nabla \boldsymbol{u}_2}_{L^{q_2}(\Omega)}\right).  \label{main1}
	\end{align}
	Applying \eqref{FractionalGagliardoNirenberg} with $\theta = 1, l = 3/2 - \delta$,  \eqref{H1embedding} and \eqref{FractionalSobolevEmbedding} leads to
	\begin{align}
		\norm{\nu(T_1) - \nu(T_2)}_{L^{p_1}(\Omega)} \norm{\nabla \boldsymbol{u}_2}_{W^{1/2-\delta,p_2}(\Omega)}& \leq C \norm{T_1 - T_2}_{L^{p_1}(\Omega)} \norm{\boldsymbol{u}_2}_{W^{3/2-\delta,p_2}(\Omega)}\nonumber\\
		&\leq C\begin{cases}
			\norm{T_1 - T_2}_{L^{1/\delta}(\Omega)} \norm{\boldsymbol{u}_2}_{W^{3/2-\delta,2/(1-2\delta)}(\Omega)} \; \mbox{in} \; \mbox{2D}, \nonumber\\
			\norm{T_1 - T_2}_{L^{3/2\delta}(\Omega)} \norm{\boldsymbol{u}_2}_{W^{3/2-\delta,6/(3-4\delta)}(\Omega)}	\; \mbox{in} \; \mbox{3D}.
		\end{cases}\\
		&\leq C\begin{cases}
			\norm{T_1 - T_2}_{1,\Omega} \norm{\boldsymbol{u}_2}_{3/2+\delta,\Omega} \; \mbox{in} \; \mbox{2D}, \label{}\\
			\norm{T_1 - T_2}_{3/2 - 2\delta,\Omega} \nonumber \norm{\boldsymbol{u}_2}_{3/2+\delta,\Omega}	\; \mbox{in} \; \mbox{3D}.
		\end{cases}
	\end{align} 
	The treatment of the second term on the right hand side of \eqref{main1} is more delicate and requires another application of the fractional Leibniz rule with $\frac{1}{r_1} + \frac{1}{r_2} = \frac{1}{m_1} + \frac{1}{m_2} = \frac{1}{q_1}$, $\theta \in (0,1)$ and the assumption that $\nu(\cdot)$ is a twice continuously Fr\'echet differentiable function with bounded derivatives upto second order. 
	\begin{align}
		&\norm{\nu(T_1) - \nu(T_2)}_{W^{1/2-\delta,q_1}(\Omega)} \norm{\nabla \boldsymbol{u}_2}_{L^{q_2}(\Omega)}  = \|D^{1/2-\delta}(\nu_T(\theta T_1 + (1-\theta)T_2)(T_1-T_2))\|_{L^{q_1}(\Omega)} \norm{\nabla \boldsymbol{u}_2}_{L^{q_2}(\Omega)} \nonumber\\
		&\leq \big(\|\nu_T(\theta T_1 + (1-\theta)T_2)\|_{L^{r_1}(\Omega)} \|D^{1/2-\delta}(T_1 - T_2)\|_{L^{r_2}(\Omega)}  \nonumber\\
		&\qquad+ \|D^{1/2-\delta}(\nu_T(\theta T_1  + (1-\theta)T_2))\|_{L^{m_1}(\Omega)} \|T_1 - T_2\|_{L^{m_2}(\Omega)}\big) \norm{\nabla \boldsymbol{u}_2}_{L^{q_2}(\Omega)} \nonumber\\
		&\leq \big(\|\nu_T(\theta T_1 + (1-\theta)T_2)\|_{L^{r_1}(\Omega)} \|D^{1/2-\delta}(T_1 - T_2)\|_{L^{r_2}(\Omega)}  \label{New!!}\\
		&\qquad+ \|\nu_{TT}(T)\|_{L^{\infty}(\Omega)} \big(\|D^{1/2-\delta}T_1\|_{L^{m_1}(\Omega)} + \|D^{1/2-\delta}T_2\|_{L^{m_1}(\Omega)}\big) \|T_1 - T_2\|_{L^{m_2}(\Omega)}\big) \norm{\nabla \boldsymbol{u}_2}_{L^{q_2}(\Omega)}. \nonumber
		%		&= C\begin{cases}
			%			\norm{T_1 - T_2}_{W^{1/2-\delta,4/(1-2\delta)}(\Omega)} \norm{\nabla \boldsymbol{u}_2}_{L^{4/(1+2\delta)}(\Omega)} \; \mbox{in} \; \mbox{2D},\nonumber\\
			%			\norm{T_1 - T_2}_{W^{1/2-\delta,3/(1-\delta)}(\Omega)} \norm{\nabla \boldsymbol{u}_2}_{L^{6/(1+2\delta)}(\Omega)} \; \mbox{in} \; \mbox{3D}.
			%		\end{cases}	
	\end{align}
	In two dimensions, we choose $q_1 = 4/(1-2\delta), q_2 = 4/(1+2\delta), r_1 = \infty, r_2 = 4/(1-2\delta), m_2 = \infty, m_1 = 4/(1-2\delta)$,  and apply \eqref{FractionalGagliardoNirenberg} and the following embedding in fractional Sobolev spaces (see Theorem 4.57 in \cite{FractionalSobolevEmbeddings}),
	\begin{align}\label{fractionalLinftyEmbedding}
		\mbox{ if } \; l r_1>d, \; \mbox{ then } \; \boldsymbol{W}^{l,r_1}(\Omega) \hookrightarrow \boldsymbol{L}^{\infty}(\Omega),
	\end{align}
	leads to
	\begin{align}
		&\norm{\nu(T_1) - \nu(T_2)}_{W^{1/2-\delta,q_1}(\Omega)} \norm{\nabla \boldsymbol{u}_2}_{L^{q_2}(\Omega)} \leq C\big( \|\nu_T(\theta T_1 + (1-\theta)T_2)\|_{L^{\infty}(\Omega)} \norm{T_1 - T_2}_{1,\Omega} \nonumber\\
		&\qquad+ \|\nu_{TT}(T)\|_{L^{\infty}(\Omega)} (\norm{T_1}_{1,\Omega} + \norm{T_2}_{1,\Omega}) \norm{T_1 - T_2}_{3/2+\delta,\Omega}\big) \norm{\boldsymbol{u}_2}_{3/2-\delta,\Omega}. \nonumber
	\end{align}
	Similarly, in three dimensions, we choose $q_1 = 6/(1+2\delta), q_2 = 3/(1-\delta), r_1 = \infty, r_2 = 6/(1+2\delta), m_2 = \infty, m_1 = 6/(1+2\delta)$,  we get
	\begin{align}
		&	\norm{\nu(T_1) - \nu(T_2)}_{W^{1/2-\delta,q_1}(\Omega)} \norm{\nabla \boldsymbol{u}_2}_{L^{q_2}(\Omega)} \leq C\big( \|\nu_T(\theta T_1 + (1-\theta)T_2)\|_{L^{\infty}(\Omega)} \norm{T_1 - T_2}_{3/2-2\delta,\Omega} \nonumber\\
		&\qquad+ \|\nu_{TT}(T)\|_{L^{\infty}(\Omega)} (\norm{T_1}_{3/2-2\delta,\Omega} + \norm{T_2}_{3/2-2\delta,\Omega}) \norm{T_1 - T_2}_{3/2+\delta,\Omega}\big) \norm{\boldsymbol{u}_2}_{3/2+\delta,\Omega}. \nonumber
	\end{align}
	Combining the above bounds and making use of \eqref{y1my2Regularity2}  results in
	\begin{align}
		\norm{\boldsymbol{u}_1 - \boldsymbol{u}_2}_{3/2-\delta,\Omega} \leq \norm{\boldsymbol{\mathcal{F}}}_{-1/2-\delta,\Omega} \leq C\left(\norm{\boldsymbol{y}_1-\boldsymbol{y}_2}_{1,\Omega} + \norm{\boldsymbol{u}_1-\boldsymbol{u}_2}_{1,\Omega}\right). \label{eq**}
	\end{align}
	Now it remains to show the bounds of $\norm{\boldsymbol{\mathcal{F}}}_{-1/2+\delta,\Omega}$. To this end, using the embedding $\boldsymbol{L}^2(\Omega) \hookrightarrow \boldsymbol{H}^{-1/2+\delta}(\Omega)$, we can obtain the same bounds as in \eqref{main-1} and \eqref{main-2}. Now following \eqref{main1}, we have 
	\begin{align}
		\norm{\bdiv((\nu(T_1) - \nu(T_2)) \nabla \boldsymbol{u}_2)}_{-1/2+\delta,\Omega} &\leq 
		%		C \norm{D^{1/2+\delta} ((\nu(T_1) - \nu(T_2)) \nabla \boldsymbol{u}_2)}_{0,\Omega} \nonumber\\
		C \left(\norm{\nu(T_1) - \nu(T_2)}_{L^{p_1}(\Omega)} \norm{\nabla \boldsymbol{u}_2}_{W^{1/2+\delta,p_2}(\Omega)} \right. \nonumber\\ &~~~~~\left.+  \norm{\nu(T_1) - \nu(T_2)}_{W^{1/2+\delta,q_1}(\Omega)} \norm{\nabla \boldsymbol{u}_2}_{L^{q_2}(\Omega)}\right), \label{main2}
	\end{align}
	for $\frac{1}{2} = \frac{1}{p_1} + \frac{1}{p_2} = \frac{1}{q_1} + \frac{1}{q_2}$. In two and three dimensions, choosing $p_2 = 2$ and applying \eqref{fractionalLinftyEmbedding} with $l = \frac{3}{2} + \delta$ and $r_1 = 2$ leads to
	{\small\begin{align*}
		\norm{\nu(T_1) - \nu(T_2)}_{L^{p_1}(\Omega)} \norm{\nabla \boldsymbol{u}_2}_{W^{1/2+\delta,p_2}(\Omega)} &\leq C \norm{T_1 - T_2}_{\infty,\Omega} \norm{\nabla \boldsymbol{u}_2}_{W^{1/2+\delta,2}(\Omega)} \leq C \norm{T_1 - T_2}_{3/2+\delta,\Omega} \norm{ \boldsymbol{u}_2}_{3/2+\delta,\Omega}.
	\end{align*}}
	For the second term on the right hand side of \eqref{main2}, analogous to \eqref{New!!}, for $\frac{1}{r_1} + \frac{1}{r_2} = \frac{1}{m_1} + \frac{1}{m_2} = \frac{1}{q_1}$, we have
	\begin{align}
		&\norm{\nu(T_1) - \nu(T_2)}_{W^{1/2+\delta,q_1}(\Omega)} \norm{\nabla \boldsymbol{u}_2}_{L^{q_2}(\Omega)}  \leq \big(\|\nu_T(\theta T_1 + (1-\theta)T_2)\|_{L^{r_1}(\Omega)} \|D^{1/2+\delta}(T_1 - T_2)\|_{L^{r_2}(\Omega)}  \nonumber\\
		&\quad+ \|\nu_{TT}(T)\|_{L^{\infty}(\Omega)} \big(\|D^{1/2+\delta}T_1\|_{L^{m_1}(\Omega)} + \|D^{1/2+\delta}T_2\|_{L^{m_1}(\Omega)}\big) \|T_1 - T_2\|_{L^{m_2}(\Omega)}\big) \norm{\nabla \boldsymbol{u}_2}_{L^{q_2}(\Omega)}. \nonumber				
	\end{align} 
	In two dimensions, choose $q_1 = 4/(1+2\delta), q_2 = 4/(1-2\delta), r_1 = \infty, r_2 = 4/(1+2\delta), m_2 = \infty, m_1 = 4/(1+2\delta)$,  and apply \eqref{FractionalGagliardoNirenberg} and	\eqref{fractionalLinftyEmbedding},
	\begin{align}
		&	\norm{\nu(T_1) - \nu(T_2)}_{W^{1/2-\delta,q_1}(\Omega)} \norm{\nabla \boldsymbol{u}_2}_{L^{q_2}(\Omega)} \leq C\big( \|\nu_T(\theta T_1 + (1-\theta)T_2)\|_{L^{\infty}(\Omega)} \norm{T_1 - T_2}_{1,\Omega} \nonumber\\
		&\qquad+ \|\nu_{TT}(T)\|_{L^{\infty}(\Omega)} (\norm{T_1}_{1,\Omega} + \norm{T_2}_{1,\Omega})  \norm{T_1 - T_2}_{3/2+\delta,\Omega}\big) \norm{\boldsymbol{u}_2}_{3/2+\delta,\Omega}. \nonumber
	\end{align}
	%	choose $q_1 = \frac{4}{1+2\delta}, q_2 = \frac{4}{1-2\delta}$ and apply \eqref{FractionalGagliardoNirenberg} with $\theta = 1, l = \frac{1}{2} - \delta$ and $\theta = 1, l = 1$ for terms corresponding to $q_1$ and $q_2$, respectively, to obtain
	%	$$\norm{T_1 - T_2}_{W^{1/2+\delta,4/(1+2\delta)}(\Omega)} \norm{\nabla \boldsymbol{u}_2}_{L^{4/(1-2\delta)}(\Omega)} \leq C \left(\norm{T_1 - T_2}_{1,\Omega} \norm{\boldsymbol{u}_2}_{3/2+\delta,\Omega}\right) \; \mbox{in} \; \mbox{2D}.$$
	Analogously, in three dimensions, choosing $q_1 = 6, q_2 = 3, r_1 = \infty, r_2 = 6, m_2 = \infty, m_1 = 6$, results to
	\begin{align}
		&	\norm{\nu(T_1) - \nu(T_2)}_{W^{1/2-\delta,q_1}(\Omega)} \norm{\nabla \boldsymbol{u}_2}_{L^{q_2}(\Omega)} \leq C\big( \|\nu_T(\theta T_1 + (1-\theta)T_2)\|_{L^{\infty}(\Omega)} \norm{T_1 - T_2}_{3/2+\delta,\Omega} \nonumber\\
		&\qquad+ \|\nu_{TT}(T)\|_{L^{\infty}(\Omega)} (\norm{T_1}_{3/2+\delta,\Omega} + \norm{T_2}_{3/2+\delta,\Omega})\norm{T_1 - T_2}_{3/2+\delta,\Omega}\big) \norm{\boldsymbol{u}_2}_{3/2+\delta,\Omega}. \nonumber
	\end{align}	
	%	\begin{align}
		%		\norm{T_1 - T_2}_{W^{1/2+\delta,6}(\Omega)} \norm{\nabla \boldsymbol{u}_2}_{L^{3}(\Omega)} &\leq C \left(\norm{T_1 - T_2}_{3/2+\delta,\Omega} \norm{\boldsymbol{u}_2}_{3/2,\Omega}\right) \nonumber\\
		%		&\leq  C \left(\norm{T_1 - T_2}_{3/2+\delta,\Omega} \norm{\boldsymbol{u}_2}_{3/2+\delta,\Omega}\right) \; \mbox{in} \; \mbox{3D}.
		%	\end{align}
	The following bounds can be obtained analogous to \cite[Equations (3.25), (3.27) and (3.42)]{DDF} in two and three dimensions, respectively,
	\begin{align}\label{mainEq1}
		\norm{(\nabla \nu(T_1) \cdot \nabla) (\boldsymbol{u}_2 - \boldsymbol{u_1})}_{-1/2 + \delta, \Omega} \leq C \norm{\boldsymbol{u}_1 - \boldsymbol{u}_2}_{1,\Omega} \norm{T_1}_{3/2+\delta,\Omega} \;\; \mbox{in} \;\; \mbox{2D},
	\end{align}
	and for $\delta_a = (0,\frac{1}{4})$ and $\delta_b = [\frac{1}{4}, \frac{1}{2})$, we have
	\begin{align}\label{mainEq2}
		\begin{aligned}
			&\norm{(\nabla \nu(T_1) \cdot \nabla) (\boldsymbol{u}_2 - \boldsymbol{u_1})}_{-1/2 + \delta_a, \Omega} \leq \norm{\boldsymbol{u}_1 - \boldsymbol{u}_2}_{3/2-\delta_a,\Omega} \norm{T_1}_{3/2+2\delta_a,\Omega} \;\; \mbox{in} \;\; \mbox{3D}, \\
			&\norm{(\nabla \nu(T_1) \cdot \nabla) (\boldsymbol{u}_2 - \boldsymbol{u_1})}_{-1/2 + \delta_b, \Omega} \leq \norm{\boldsymbol{u}_1 - \boldsymbol{u}_2}_{3/2+\delta_a,\Omega} \norm{T_1}_{3/2+(\delta_b - \delta_a),\Omega} \;\; \mbox{in} \;\; \mbox{3D},
		\end{aligned}
	\end{align}
	where, $2\delta_a$ belongs to $(0, \frac{1}{2})$ and $(\delta_b - \delta_a)$ belongs to $(0,\frac{1}{2})$.
	Combining the above bounds and making use of \eqref{y1my2Regularity3}, \eqref{y1my2Regularity2} and \eqref{eq**} result in
	\begin{align}\label{eq***}
		\norm{\boldsymbol{u}_1 - \boldsymbol{u}_2}_{3/2+\delta,\Omega} \leq \norm{\boldsymbol{\mathcal{F}}}_{-1/2+\delta,\Omega} \leq C\left(\norm{\boldsymbol{y}_1-\boldsymbol{y}_2}_{1,\Omega} + \norm{\boldsymbol{u}_1-\boldsymbol{u}_2}_{1,\Omega}\right). 
	\end{align}
	Now an application of \eqref{stabilityH1} in \eqref{y1my2Regularity2} and \eqref{eq***} leads to \eqref{stabilityH3/2}.
\end{proof}

\begin{remark}
	The assumption of twice continuous Fr\'echet differentiablilty and bounded derivatives of  $\nu(\cdot)$ (upto order two) above can be relaxed by assuming that $\nu_{TT}$ exists a.e. such that $\|\nu_{TT}\|_{L^{\infty}(\Omega)}<+\infty$. 
\end{remark}

\begin{remark}[Smallness assumption]\label{cts:smalldata}
	By smallness assumption we mean either the data $\boldsymbol{y}^D$ and $\boldsymbol{U}$ are ``sufficiently small" or $\alpha_a$ is ``sufficiently large''. For example in the context of state equation \eqref{P:Sred},  the condition \eqref{smalldata} in Theorem \ref{State.Wp.Reg} implies a smallness assumption, which when satisfied  guarantees the uniqueness of a regular weak solution to the state equation. 
\end{remark}

\section{First order necessary optimality conditions} \label{Sec:FirstOrderCondition}
In this section, we discuss the existence of an optimal control corresponding to the cost-functional \eqref{P:CF} and the well-posedness of linearized equations, followed by a study of the differentiability properties of the control-to-state map. Finally, we use these results to derive a reference control’s first-order necessary optimality conditions.  We begin by recalling the definition of $\boldsymbol{\mathcal{U}}_{ad}$ from \eqref{Uad} and defining the set of admissible solutions $\boldsymbol{S}_{ad}$ as follows:
\begin{align*}
	\boldsymbol{S}_{ad} &:= \{(\boldsymbol{u},\boldsymbol{y},\boldsymbol{U}) \in \boldsymbol{X} \cap \boldsymbol{H}^{3/2+\delta}(\Omega) \times [H^1(\Omega) \cap H^{3/2+\delta}(\Omega)]^2 \times \boldsymbol{L}^r(\Omega):\\
	&\hspace{3cm} (\boldsymbol{u},\boldsymbol{y},\boldsymbol{U}) \; \mbox{ satisfies } \; \eqref{P:Sred} \ \mbox{ with the control }\  \boldsymbol{U} \in \boldsymbol{\mathcal{U}}_{ad},\; \boldsymbol{y}|_{\Gamma} = \boldsymbol{y}^D\}.
\end{align*}
In order to work with the $\boldsymbol{L}^2$-neighbourhood of the reference control, we fix 
$r' = 4, r = 4/3 \; \mbox{ and } \; s = 2$ (see Remark \ref{choice}). Notice that for all $\boldsymbol{U} \in \boldsymbol{L}^s(\Omega),$ it holds by the interpolation inequality  that
\begin{align}\label{ControlInterpolation}
	\norm{\boldsymbol{U}}^2_{{L}^r(\Omega)} \leq \norm{\boldsymbol{U}}_{L^1(\Omega)} \norm{\boldsymbol{U}}_{L^{s}(\Omega)}.
\end{align}
which imposes a further restriction on $r$: 
\begin{align}\label{RestrictionOnr_final}
	1 < r \leq 2 \;\; \mbox{if} \;\; d = 2, \;\; \mbox{or} \;\; \frac{6}{5} \leq r \leq 2 \;\; \mbox{if} \;\; d = 3.
\end{align}

\begin{theorem}[Existence of an optimal control]\label{ExistenceOptimalControl}
	Under the uniqueness of state trajectories
	the optimal control problem \eqref{P:CF}-\eqref{P:GE} admits an optimal solution.
\end{theorem}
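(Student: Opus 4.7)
The plan is to apply the direct method of the calculus of variations. First, $\boldsymbol{\mathcal{U}}_{ad}$ is nonempty (since $U_{a_j} \leq U_{b_j}$ a.e.), and for every $\boldsymbol{U} \in \boldsymbol{\mathcal{U}}_{ad}$ the assumed uniqueness of state trajectories, together with Theorem \ref{State.Wp.Reg}, produces a unique state $(\boldsymbol{u}, \boldsymbol{y})$; hence $\boldsymbol{S}_{ad}$ is nonempty and $j^{*} := \inf_{\boldsymbol{S}_{ad}} J \geq 0$ is finite. Pick a minimizing sequence $\{(\boldsymbol{u}_n, \boldsymbol{y}_n, \boldsymbol{U}_n)\} \subset \boldsymbol{S}_{ad}$. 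Because $\boldsymbol{\mathcal{U}}_{ad}$ is bounded in $\boldsymbol{L}^s(\Omega) = \boldsymbol{L}^2(\Omega)$ by the box constraints, and consequently bounded in $\boldsymbol{L}^r(\Omega)$ via \eqref{ControlInterpolation}, the a priori estimates \eqref{MuMy} furnish a uniform bound on $(\boldsymbol{u}_n, \boldsymbol{y}_n)$ in $\boldsymbol{H}^{3/2+\delta}(\Omega) \times [H^{3/2+\delta}(\Omega)]^2$. Extract a subsequence (not relabelled) with $\boldsymbol{U}_n \rightharpoonup \boldsymbol{U}^{*}$ in $\boldsymbol{L}^2(\Omega)$ and $\boldsymbol{u}_n \rightharpoonup \boldsymbol{u}^{*}$, $\boldsymbol{y}_n \rightharpoonup \boldsymbol{y}^{*}$ in the corresponding $H^{3/2+\delta}$ spaces; Rellich--Kondrachov then upgrades the latter to strong convergence in $\boldsymbol{H}^1$ and pointwise a.e. Convexity and closedness of $\boldsymbol{\mathcal{U}}_{ad}$ in $\boldsymbol{L}^2(\Omega)$ yield weak closure, so $\boldsymbol{U}^{*} \in \boldsymbol{\mathcal{U}}_{ad}$; continuity of the trace gives $\boldsymbol{y}^{*}|_{\Gamma} = \boldsymbol{y}^{D}$.

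Next I pass to the limit in the reduced system \eqref{P:Sred}. The linear forms $a(\cdot)$ (Darcy part), $a_{\boldsymbol{y}}$ and $b$ pass by weak convergence. The trilinear terms $c(\boldsymbol{u}_n, \boldsymbol{u}_n, \boldsymbol{v})$ and $c_{\boldsymbol{y}}(\boldsymbol{u}_n, \boldsymbol{y}_n, \boldsymbol{s})$ pass by combining strong $\boldsymbol{H}^1$-convergence with the continuity bounds of Lemma \ref{ContinuityProp}; the buoyancy term $d(\boldsymbol{y}_n, \boldsymbol{v})$ passes via Lipschitz continuity of $\boldsymbol{F}$; and the duality pairing $(\boldsymbol{U}_n, \boldsymbol{v})_{r, r'}$ closes because $\boldsymbol{v} \in \boldsymbol{H}_0^1(\Omega) \hookrightarrow \boldsymbol{L}^{r'}(\Omega)$ by \eqref{H1embedding}. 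The delicate term is the temperature-dependent diffusion, which I split as
\begin{equation*}
(\nu(T_n) \nabla \boldsymbol{u}_n, \nabla \boldsymbol{v}) - (\nu(T^{*}) \nabla \boldsymbol{u}^{*}, \nabla \boldsymbol{v}) = (\nu(T_n) \nabla (\boldsymbol{u}_n - \boldsymbol{u}^{*}), \nabla \boldsymbol{v}) + ((\nu(T_n) - \nu(T^{*})) \nabla \boldsymbol{u}^{*}, \nabla \boldsymbol{v}),
\end{equation*}
and dispatch: the first summand vanishes using the uniform bound $\nu(\cdot) \leq \nu_2$ and strong $\boldsymbol{H}^1$-convergence of $\boldsymbol{u}_n$; the second vanishes by pointwise-a.e. convergence $\nu(T_n) \to \nu(T^{*})$ (Lipschitz continuity of $\nu$) combined with dominated convergence, since $|\nu(T_n) - \nu(T^{*})| \leq 2\nu_2$ and $|\nabla \boldsymbol{u}^{*}|^2 \in L^1(\Omega)$. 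This yields $(\boldsymbol{u}^{*}, \boldsymbol{y}^{*}, \boldsymbol{U}^{*}) \in \boldsymbol{S}_{ad}$, and the uniqueness assumption identifies it as the unique state associated with $\boldsymbol{U}^{*}$.

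To conclude, $J$ is a sum of squared $L^2$-norms, hence convex and continuous, therefore weakly sequentially lower semicontinuous. Combining strong convergence of the states with weak convergence of the controls gives
\begin{equation*}
J(\boldsymbol{u}^{*}, \boldsymbol{y}^{*}, \boldsymbol{U}^{*}) \leq \liminf_{n \to \infty} J(\boldsymbol{u}_n, \boldsymbol{y}_n, \boldsymbol{U}_n) = j^{*},
\end{equation*}
so $(\boldsymbol{u}^{*}, \boldsymbol{y}^{*}, \boldsymbol{U}^{*})$ attains the infimum and is an optimal solution. The main obstacle is the passage to the limit in the diffusion term: neither $\nu(T_n)$ nor $\nabla \boldsymbol{u}_n$ converges strongly in a topology that permits a naive product argument, so I must lean on the compactness-induced strong $\boldsymbol{H}^1$-convergence of the states, which itself hinges on the higher regularity $\boldsymbol{H}^{3/2+\delta}$ established in Theorem \ref{State.Wp.Reg}. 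A secondary, purely bookkeeping, obstacle is verifying exponent consistency between $r$, $r'$ and $s$ through \eqref{ControlInterpolation}, so that weak $\boldsymbol{L}^2$-convergence of the controls suffices for the duality pairing and for the energy estimates at every step.
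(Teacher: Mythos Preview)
Your proof is correct and follows the same direct-method skeleton as the paper (minimizing sequence, weak compactness, passage to the limit, weak lower semicontinuity), but with two genuine differences worth noting. First, you obtain boundedness of $\{\boldsymbol{U}_n\}$ from the box constraints in $\boldsymbol{\mathcal{U}}_{ad}$, whereas the paper extracts it from the cost functional via $J(\boldsymbol{u}^n,\boldsymbol{y}^n,\boldsymbol{U}^n)\le \bar J + 1/n$; both are valid, and yours is slightly more direct since it does not rely on $\lambda>0$. Second, and more substantively, the paper works only with the $\boldsymbol{H}^1$ bound on the states and uses the compact embedding $\boldsymbol{H}_0^1\hookrightarrow\boldsymbol{L}^2$ to get strong $\boldsymbol{L}^2$ convergence, then appeals to the machinery of \cite[Theorem~3.6]{DDF} to pass to the limit in $c(\cdot,\cdot,\cdot)$ and in $(\nu(T_n)\nabla\boldsymbol{u}_n,\nabla\boldsymbol{v})$. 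You instead exploit the $\boldsymbol{H}^{3/2+\delta}$ regularity from Theorem~\ref{State.Wp.Reg} and the compact embedding $\boldsymbol{H}^{3/2+\delta}\hookrightarrow\boldsymbol{H}^1$ to upgrade to strong $\boldsymbol{H}^1$ convergence of the states. This buys you a considerably cleaner limit argument in the temperature-dependent diffusion term: once $\nabla\boldsymbol{u}_n\to\nabla\boldsymbol{u}^*$ strongly in $\boldsymbol{L}^2$, your splitting plus dominated convergence is entirely self-contained and avoids the external reference. The paper's route is more classical and would survive without the extra half-derivative of regularity; yours is shorter given that the regularity has already been established.
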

\begin{proof}
	Since $J(\boldsymbol{u},\boldsymbol{y},\boldsymbol{U}) \geq 0$, there exists an infimum  $\bar{J}$ of $J$ over $\boldsymbol{S}_{ad}$, that is,
	$$0 \leq  \bar{J} := \inf_{\boldsymbol{S}_{ad}} J(\boldsymbol{u},\boldsymbol{y},\boldsymbol{U}) < + \infty.$$
	By the definition of infimum, there exists a minimizing sequence $(\boldsymbol{u}^n, \boldsymbol{y}_0^n+ \boldsymbol{y}_{1}, \boldsymbol{U}^n)\in\boldsymbol{S}_{ad}$ such that $$J(\boldsymbol{u}^n, \boldsymbol{y}_0^n + \boldsymbol{y}_{1}, \boldsymbol{U}^n) \longrightarrow \bar{J} \;  \mbox{ as } \; n \rightarrow \infty,$$
	where $(\boldsymbol{u}^n,\boldsymbol{y}_0^{n}+\boldsymbol{y}_1)$ is the unique weak solution of the reduced problem \eqref{P:Sred} with control $\boldsymbol{U}^n$. Therefore, we have 
	$$\bar{J} \leq J(\boldsymbol{u}^n,\boldsymbol{y}_0^{n}+\boldsymbol{y}_{1},\boldsymbol{U}^n) \leq \bar{J} + \frac{1}{n},$$
	which implies that we can find a large enough constant $\tilde{C} > 0$, such that the set $\left\{\boldsymbol{U}^n\right\}_{n\in\mathbb{N}}$ is uniformly bounded in $\boldsymbol{L}^2(\Omega)$, that is,
	\begin{eqnarray}\label{ExistenceOptimalControlEq1}
		\norm{\boldsymbol{U}^n}^2_{0,\Omega} \leq C < + \infty.
	\end{eqnarray}
	From \eqref{MuMy}, we have the following energy estimates:
	\begin{align*}
		\norm{\boldsymbol{u}^n}_{1,\Omega} \leq C_{\boldsymbol{u}} \left(\norm{\boldsymbol{y}_1}_{1,\Omega} + \norm{\boldsymbol{U}^n}_{L^{r}(\Omega)}\right) \;\; \mbox{and} \;\;
		\norm{\nabla \boldsymbol{y}_0^n}_{1,\Omega} \leq C_{\boldsymbol{y}} \left(\norm{\boldsymbol{y}_1}_{1,\Omega} + \norm{\boldsymbol{U}^n}_{L^{r}(\Omega)}\right).	
	\end{align*}	
	Now an application of the continuous embedding $\boldsymbol{L}^2\hookrightarrow\boldsymbol{L}^r$ and \eqref{ExistenceOptimalControlEq1} imply that $\left\{\boldsymbol{u}^n\right\}$ and $\left\{\boldsymbol{y}_0^n\right\}$ are uniformly bounded in $\boldsymbol{H}^1(\Omega)$ and $[H^1(\Omega)]^2$, respectively. Thus, using the Banach-Alaglou Theorem, we can extract a subsequence $\left\{\boldsymbol{u}^{n_{k}}, \boldsymbol{y}_0^{n_{k}} + \boldsymbol{y}_{1}, \boldsymbol{U}^{n_{k}}\right\}$ such that
	$$\left\{\boldsymbol{u}^{n_{k}}, \boldsymbol{y}_0^{n_{k}} + \boldsymbol{y}_{1}, \boldsymbol{U}^{n_{k}}\right\} \rightharpoonup (\bar{\boldsymbol{u}}, \bar{\boldsymbol{y}}_0 + {\boldsymbol{y}}_{1}, \bar{\boldsymbol{U}}) \;\; \mbox{in} \;\; \boldsymbol{H}^1(\Omega) \times [H^1(\Omega)]^2 \times \boldsymbol{L}^2(\Omega).$$
	Since $\boldsymbol{\mathcal{U}}_{ad}$ is closed and convex in $\boldsymbol{L}^2(\Omega)$, then by Mazur's Theorem, $\boldsymbol{\mathcal{U}}_{ad}$ is weakly closed in $\boldsymbol{L}^2(\Omega)$. In other words, every  weakly convergent sequence in $\boldsymbol{\mathcal{U}}_{ad}$ has its limit in $\boldsymbol{\mathcal{U}}_{ad}$, which implies that $\bar{\boldsymbol{U}} \in \boldsymbol{\mathcal{U}}_{ad}$. Now it remains to show that $(\bar{\boldsymbol{u}},\bar{\boldsymbol{y}},\bar{\boldsymbol{U}})\in \boldsymbol{S}_{ad} $, where $\bar{\boldsymbol{y}}=\bar{\boldsymbol{y}}_0+\boldsymbol{y}_1$.  Since $\boldsymbol{H}_0^1(\Omega)$ is compactly embedded in $\boldsymbol{L}^2(\Omega)$, we can extract a subsequence $(\boldsymbol{u}^{n_{k_j}})_{j \in \mathbb{Z}^+}$ and $(\boldsymbol{y}_0^{n_{k_j}})_{j \in \mathbb{Z}^+}$ such that
	\begin{align*}
		\boldsymbol{u}^{n_{k_j}} \longrightarrow \bar{\boldsymbol{u}} \; \mbox{ in } \; \boldsymbol{L}^2(\Omega) \; \mbox{ and } \;\;
		\boldsymbol{y}_0^{n_{k_j}} \longrightarrow \bar{\boldsymbol{y}}_0 \; \mbox{ in } \; \left[{L}^2(\Omega)\right]^2 \; \mbox{ as } \; j \rightarrow \infty.
	\end{align*}
	Passing the limit in 
	\begin{align}\label{ExistenceOptimalControlEq2}
		\left\{
		\begin{aligned}
			a(\boldsymbol{y}_0^n + \boldsymbol{y}_{1}; \boldsymbol{u}^n, \boldsymbol{v}) + c(\boldsymbol{u}^n,\boldsymbol{u}^n,\boldsymbol{v}) - d(\boldsymbol{y}_0^n + \boldsymbol{y}_{1}, \boldsymbol{v}) - (\boldsymbol{U}^n, \boldsymbol{v}^j) &= 0, \\
			a_{\boldsymbol{y}}(\boldsymbol{y}_0^n + \boldsymbol{y}_{1}, \boldsymbol{s}) + c_{\boldsymbol{y}}(\boldsymbol{u}^n,\boldsymbol{y}_0^n + \boldsymbol{y}_{1}, \boldsymbol{s}) &= 0.
		\end{aligned}
		\right.
	\end{align}
	using the above convergences  as $n \rightarrow \infty$ along $(n_{k_{j}})$, we find that $(\bar{\boldsymbol{u}},\bar{\boldsymbol{y}},\bar{\boldsymbol{U}})$ satisfies \eqref{P:Sred}, provided
	\begin{itemize}
		\item $\boldsymbol{c}(\boldsymbol{u}^{n_{k_j}},\boldsymbol{u}^{n_{k_j}},\boldsymbol{v}) \longrightarrow \boldsymbol{c}(\boldsymbol{\bar{u}},\boldsymbol{\bar{u}},\boldsymbol{v}) \;\; \forall \;\; \boldsymbol{v} \in \boldsymbol{C}_0^{\infty}(\Omega)$, which follows by using the above conclusions and following steps analogous to the proof of Theorem \cite[Theorem 3.6]{DDF}.
		\item Similarly, one can obtain $$	\left(\nu(\boldsymbol{y}_0^{n_{k_j}} + \boldsymbol{y}_{1}) \nabla \boldsymbol{u}^{n_{k_j}}, \nabla \boldsymbol{v}\right) \longrightarrow \left(\nu(\bar{\boldsymbol{y}}_0 + \boldsymbol{y}_1) \nabla \bar{\boldsymbol{u}}, \nabla \boldsymbol{v}\right) \;\; \forall \;\; \boldsymbol{v} \in \boldsymbol{C}_0^{\infty}(\Omega).$$
	\end{itemize}	
	Limits can be passed to the other terms of \eqref{ExistenceOptimalControlEq2} through the subsequence $(n_{k_j})$ as $j \rightarrow \infty$. Therefore, $(\bar{\boldsymbol{u}},\bar{\boldsymbol{y}},\bar{\boldsymbol{U}})\in \boldsymbol{S}_{ad} $. %\textcolor{red}{Observe that due to Theorem \ref{StateUniqueness} and Theorem \ref{WellposednessLinearizedEquations}, the state and adjoint trajectories corresponding to $\bar{\boldsymbol{U}}$ are unique under the condition \eqref{LinearizedEquationConditions}. }
	Now it remains to show that $\bar{J} = J(\bar{\boldsymbol{u}}, \bar{\boldsymbol{y}}, \bar{\boldsymbol{U}})$. The continuity and convexity of $J$ ensure its weak lower semicontinuity. Consequently, using (Theorem 2.12, \cite{troltzsch2010optimal}) for a sequence 
	$$\left\{\boldsymbol{u}^n, \boldsymbol{y}^n, \boldsymbol{U}^n\right\} \rightharpoonup \left(\bar{\boldsymbol{u}}, \bar{\boldsymbol{y}}, \bar{\boldsymbol{U}}\right) \;\; \mbox{in} \;\; \boldsymbol{H}^1(\Omega) \times \left[H^1(\Omega)\right]^2 \times \boldsymbol{L}^2(\Omega),$$
	we have,
	$$\liminf_{n \rightarrow \infty} J(\boldsymbol{u}^n, \boldsymbol{y}^n, \boldsymbol{U}^n) \geq J(\bar{\boldsymbol{u}}, \bar{\boldsymbol{y}}, \bar{\boldsymbol{U}}).$$
	Therefore, we obtain 
	$\bar{J} \leq J(\bar{\boldsymbol{u}}, \bar{\boldsymbol{y}}, \bar{\boldsymbol{U}}) \leq \liminf_{n \rightarrow \infty} J(\boldsymbol{u}^n, \boldsymbol{y}^n, \boldsymbol{U}^n) = \lim_{n \rightarrow \infty} J(\boldsymbol{u}^n, \boldsymbol{y}^n, \boldsymbol{U}^n) = \bar{J},$
	and hence $\left(\bar{\boldsymbol{u}}, \bar{\boldsymbol{y}}, \bar{\boldsymbol{U}}\right)$ is a minimizer of \eqref{P:CF}-\eqref{P:GE}.
\end{proof}

	\begin{remark}\label{Zad_bdd_in_Lr}
		Let the $\boldsymbol{L}^r(\Omega)$-norm of the admissible controls be bounded by $$\mathcal{M}_r = \sup_{\boldsymbol{U} \in \boldsymbol{\mathcal{U}}_{ad}} \norm{\boldsymbol{U}}_{L^{r}(\Omega)}.$$ If the condition  \eqref{smalldata} is satisfied, then using the bounds in \eqref{MuMy}, one can see that Theorem \ref{State.Wp.Reg} ensures the existence and uniqueness of state variables corresponding to the control variable. Therefore, the admissible class $\boldsymbol{S}_{ad}$  of solutions is non-empty. 
	\end{remark}

\begin{lemma}[Linearized equations]\label{EnergyEstLinearized}
	Assume that $\boldsymbol{F}(\cdot)$ and $\nu(\cdot)$ are twice continuously Fr\'echet differentiable with bounded derivatives  of  order up to two. Given $(\bar{\boldsymbol{u}}, \bar{\boldsymbol{y}}) \in \boldsymbol{X} \cap \boldsymbol{H}^{3/2+\delta}(\Omega) \times \left[H^1(\Omega) \cap H^{3/2+\delta}(\Omega)\right]^2$, the solution $(\boldsymbol{\psi}, \boldsymbol{\chi}) \in \boldsymbol{X} \times \left[H_0^1(\Omega)\right]^2$ of the linearized equations 
	\begin{align}\label{linearizedEqns}
		\left\{
		\begin{aligned}
			& a(\bar{\boldsymbol{y}}; \boldsymbol{\psi}, \boldsymbol{v}) + c(\boldsymbol{\psi}, \bar{\boldsymbol{u}}, \boldsymbol{v}) + c(\bar{\boldsymbol{u}}, \boldsymbol{\psi}, \boldsymbol{v}) \\
			&\qquad\qquad+ ((\nu_{T} (\bar{T})) \chi^T \; \nabla \bar{\boldsymbol{u}}, \nabla \boldsymbol{v}) - ((\boldsymbol{F}_{\boldsymbol{y}}(\bar{\boldsymbol{y}})) \boldsymbol{\chi}, \boldsymbol{v})  = \langle \boldsymbol{\hat{f}}, \boldsymbol{v} \rangle \;\; \forall \;\; \boldsymbol{v} \in \boldsymbol{X}, \\
			& a_{\boldsymbol{y}}(\boldsymbol{\chi}, \boldsymbol{s}) + c_{\boldsymbol{y}}(\bar{\boldsymbol{u}}, \boldsymbol{\chi}, \boldsymbol{s}) + c_{\boldsymbol{y}}(\boldsymbol{\psi}, \bar{\boldsymbol{y}}, \boldsymbol{s}) = \langle \boldsymbol{\tilde{f}}, \boldsymbol{s} \rangle  \;\; \forall \;\; \boldsymbol{s} \in \left[H_0^1(\Omega)\right]^2,
		\end{aligned}
		\right.
	\end{align}	
	satisfy the following a priori estimate:
	\begin{align}
		\norm{\boldsymbol{\psi}}_{1,\Omega} &\leq C_{\boldsymbol{\psi}} \left(\|\boldsymbol{\hat{f}}\|_{\boldsymbol{X}^*,\Omega} + \|\boldsymbol{\tilde{f}}\|_{-1,\Omega}\right) := M_{\boldsymbol{\psi}}, \label{M_psi}\\
		\norm{\boldsymbol{\chi}}_{1,\Omega} &\leq C_{\boldsymbol{\chi}} \left(\|\boldsymbol{\hat{f}}\|_{\boldsymbol{X}^*,\Omega} +  \|\boldsymbol{\tilde{f}}\|_{-1,\Omega}\right)  := M_{\boldsymbol{\chi}}. \label{M_chi}
	\end{align}
	where, $\boldsymbol{\chi} := (\chi^T, \chi^S)$ and $C_{\boldsymbol{\psi}}, C_{\boldsymbol{\chi}}$ are positive constants and 
	\begin{align}\label{LinearizedEquationConditions}
		\begin{aligned}
			&\alpha_a > C_{6_d} C_{3_d} \left(\frac{C_{\nu_T} C_{p_{2_d}} C_{gn} M M_{\boldsymbol{y}}}{\hat{\alpha}_a} + M_{\boldsymbol{u}} + \frac{C_{F_{\boldsymbol{y}}} M_{\boldsymbol{y}}}{\hat{\alpha}_a}\right),	
		\end{aligned}
	\end{align}
	and, $M_{\boldsymbol{u}}, M_{\boldsymbol{y}}$ and $M$ are given in Theorem \ref{State.Wp.Reg}. 
	%	{\color{purple} Furthermore, the weak solution to \eqref{linearizedEqns} for $\delta \in (0,\frac{1}{2})$, satisfies 
		%		\begin{align}\label{regularityLinearized}
			%			(\boldsymbol{\psi}, \boldsymbol{\chi}) \in \left[\boldsymbol{H}_0^1(\Omega) \cap \boldsymbol{H}^{3/2+\delta}(\Omega)\right] \times \left[H_0^1(\Omega) \cap H^{3/2+ \delta}(\Omega)\right]^2.
			%		\end{align}}
\end{lemma}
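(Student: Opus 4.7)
The plan is to derive the a priori bounds \eqref{M_psi}--\eqref{M_chi} first by the standard energy method, and then to leverage linearity of \eqref{linearizedEqns} to conclude existence and uniqueness via a Banach--Ne\v{c}as--Babu\v{s}ka argument (the a priori bound produces the inf--sup condition, while Lemma \ref{ContinuityProp} supplies continuity of the associated bilinear form on $\boldsymbol{X}\times[H_0^1(\Omega)]^2$). I would begin with the second equation of \eqref{linearizedEqns}, tested against $\boldsymbol{s}=\boldsymbol{\chi}$. Since $\bar{\boldsymbol{u}}\in\boldsymbol{X}$ is divergence-free, the analogue of \eqref{Prop:TrilinearForm} for $c_{\boldsymbol{y}}$ gives $c_{\boldsymbol{y}}(\bar{\boldsymbol{u}},\boldsymbol{\chi},\boldsymbol{\chi})=0$; coercivity of $a_{\boldsymbol{y}}$ together with Poincar\'e's inequality and the bound on $|c_{\boldsymbol{y}}(\boldsymbol{\psi},\bar{\boldsymbol{y}},\boldsymbol{\chi})|$ from Lemma \ref{ContinuityProp} combined with $\|\bar{\boldsymbol{y}}\|_{1,\Omega}\le M_{\boldsymbol{y}}$ yields the preliminary estimate
\begin{equation*}
\hat{\alpha}_a\|\boldsymbol{\chi}\|_{1,\Omega}\;\le\;C_{6_d}C_{3_d}M_{\boldsymbol{y}}\|\boldsymbol{\psi}\|_{1,\Omega}+\|\boldsymbol{\tilde{f}}\|_{-1,\Omega}.
\end{equation*}

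Next, test the first equation against $\boldsymbol{v}=\boldsymbol{\psi}$. Coercivity of $a(\bar{\boldsymbol{y}};\cdot,\cdot)$ produces $\alpha_a\|\boldsymbol{\psi}\|_{1,\Omega}^2$ on the left, while $c(\bar{\boldsymbol{u}},\boldsymbol{\psi},\boldsymbol{\psi})=0$ and $|c(\boldsymbol{\psi},\bar{\boldsymbol{u}},\boldsymbol{\psi})|\le C_{6_d}C_{3_d}M_{\boldsymbol{u}}\|\boldsymbol{\psi}\|_{1,\Omega}^2$ dispatches the convective perturbation. The buoyancy linearization $|((\boldsymbol{F}_{\boldsymbol{y}}(\bar{\boldsymbol{y}}))\boldsymbol{\chi},\boldsymbol{\psi})|$ is bounded by $C_{F_{\boldsymbol{y}}}\|\boldsymbol{\chi}\|_{0,\Omega}\|\boldsymbol{\psi}\|_{0,\Omega}$ using the $L^\infty$-bound on $\boldsymbol{F}_{\boldsymbol{y}}$ and Cauchy--Schwarz, which, combined with the preceding estimate for $\|\boldsymbol{\chi}\|_{1,\Omega}$, contributes the term proportional to $C_{F_{\boldsymbol{y}}}M_{\boldsymbol{y}}/\hat{\alpha}_a$ in \eqref{LinearizedEquationConditions}. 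The viscosity-linearization term is the delicate one: H\"older's inequality gives
\begin{equation*}
|((\nu_T(\bar{T}))\chi^T\nabla\bar{\boldsymbol{u}},\nabla\boldsymbol{\psi})|\;\le\;C_{\nu_T}\|\chi^T\|_{L^{p_{2_d}}(\Omega)}\|\nabla\bar{\boldsymbol{u}}\|_{L^{p'_{2_d}}(\Omega)}\|\nabla\boldsymbol{\psi}\|_{0,\Omega},
\end{equation*}
and I would invoke the embedding $H^1\hookrightarrow L^{p_{2_d}}$ on $\chi^T$ together with \eqref{FractionalGagliardoNirenberg} applied to $\bar{\boldsymbol{u}}\in\boldsymbol{H}^{3/2+\delta}(\Omega)$ to extract $\|\nabla\bar{\boldsymbol{u}}\|_{L^{p'_{2_d}}(\Omega)}\le C_{gn}M$, with the Sobolev exponents $p_{2_d},p'_{2_d}$ chosen dimension-appropriately exactly as in the proof of Lemma \ref{State.Wp.Reg}.

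Substituting the preliminary bound on $\|\boldsymbol{\chi}\|_{1,\Omega}$ into the inequality for $\|\boldsymbol{\psi}\|_{1,\Omega}^2$, grouping the quadratic terms on the left, and applying the smallness hypothesis \eqref{LinearizedEquationConditions}---whose three additive contributions are structured to absorb precisely the viscosity, convection and buoyancy perturbations obtained above---yields \eqref{M_psi}. Feeding this back into the preliminary estimate delivers \eqref{M_chi}. For existence, the a priori bound implies uniqueness of \eqref{linearizedEqns}, and since the associated bilinear form is continuous on $\boldsymbol{X}\times[H_0^1(\Omega)]^2$ the Banach--Ne\v{c}as--Babu\v{s}ka theorem produces a unique solution; alternatively, a Schauder fixed-point argument iterating $\boldsymbol{\chi}$ through the two equations works equally well.

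The main obstacle is the viscosity linearization $((\nu_T(\bar{T}))\chi^T\nabla\bar{\boldsymbol{u}},\nabla\boldsymbol{\psi})$. In three dimensions, $\nabla\bar{\boldsymbol{u}}\in\boldsymbol{H}^{1/2+\delta}(\Omega)$ is not in $L^\infty$, so one cannot pull it out in the sup-norm; the exact Sobolev-exponent matching provided by \eqref{FractionalGagliardoNirenberg} together with \eqref{FractionalSobolevEmbedding} is essential. This is precisely why the $\boldsymbol{H}^{3/2+\delta}$-regularity of the reference state furnished by Lemma \ref{State.Wp.Reg} is indispensable, and why the constants $C_{\nu_T},C_{p_{2_d}},C_{gn},M,M_{\boldsymbol{y}}$ appear in the exact combination displayed in \eqref{LinearizedEquationConditions}.
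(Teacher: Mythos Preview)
Your derivation of the a priori estimates is essentially identical to the paper's: test the second equation with $\boldsymbol{\chi}$ (killing $c_{\boldsymbol{y}}(\bar{\boldsymbol{u}},\boldsymbol{\chi},\boldsymbol{\chi})$), test the first with $\boldsymbol{\psi}$ (killing $c(\bar{\boldsymbol{u}},\boldsymbol{\psi},\boldsymbol{\psi})$), bound the viscosity-linearization term via H\"older and the fractional Gagliardo--Nirenberg inequality \eqref{FractionalGagliardoNirenberg} exploiting $\bar{\boldsymbol{u}}\in\boldsymbol{H}^{3/2+\delta}(\Omega)$, then substitute the $\boldsymbol{\chi}$-inequality into the $\boldsymbol{\psi}$-inequality and absorb via \eqref{LinearizedEquationConditions}. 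The paper records these two steps as inequalities \eqref{LinearizedEq1}--\eqref{LinearizedEq2} after first isolating the continuity bounds \eqref{CtyFy.1}--\eqref{CtynuT.1}, but the substance is the same.

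One organizational remark: the lemma as stated is purely an a priori estimate for a given solution; existence and uniqueness are deferred to Theorem~\ref{WellposednessLinearizedEquations}. There the paper does \emph{not} use Banach--Ne\v{c}as--Babu\v{s}ka or a Schauder fixed point, but rather a Galerkin-type argument (Lemma~1.4, Chapter~II of \cite{temam2001navier}) based on verifying coercivity of a finite-dimensional map $\boldsymbol{L}_n$. Your BNB suggestion would work as well and is arguably cleaner for a linear problem, since the a priori bound you derived is exactly the inf--sup condition; the paper's route is chosen for consistency with the nonlinear existence proof of the state equation.
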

\begin{proof}
	We have the following continuity properties  for all $\boldsymbol{v} \in \boldsymbol{X}$
	\begin{align}
		|((\boldsymbol{F}_{\boldsymbol{y}}(\bar{\boldsymbol{y}})) \boldsymbol{\chi}, \boldsymbol{v})| &\leq \norm{\boldsymbol{F}_{\boldsymbol{y}}(\bar{\boldsymbol{y}})}_{\infty,\Omega} \norm{\boldsymbol{\chi}}_{0,\Omega} \norm{\boldsymbol{v}}_{0,\Omega} \leq C_{F_{\boldsymbol{y}}} \norm{\boldsymbol{\chi}}_{1,\Omega} \norm{\boldsymbol{v}}_{1,\Omega}, \label{CtyFy.1} \\
		|((\nu_{T}(\bar{T}) \chi^T \nabla \boldsymbol{\bar{u}}, \nabla \boldsymbol{v}))| 
		&\leq \norm{\nu_T(\bar{T})}_{\infty,\Omega} \norm{\nabla \boldsymbol{v}}_{0,\Omega} \norm{\nabla \bar{\boldsymbol{u}}}_{L^{p_1}(\Omega)} \norm{\chi^T}_{L^{p_2}(\Omega)} \nonumber\\
		&\leq C_{\nu_T} C_{gn} C_{p_{2_d}} \norm{\boldsymbol{v}}_{1,\Omega} \norm{\bar{\boldsymbol{u}}}_{3/2 + \delta,\Omega} \norm{\chi^T}_{1,\Omega} \nonumber\\
		&\leq C_{\nu_T} C_{gn} C_{p_{2_d}} M \norm{\boldsymbol{\chi}}_{1,\Omega} \norm{\boldsymbol{v}}_{1,\Omega}, \label{CtynuT.1}
	\end{align}
	where in \eqref{CtynuT.1} we have used \eqref{MuMy} and the fact that for all $0 < \delta < 1/2$ and $\theta = 1$, the  inequality \eqref{FractionalGagliardoNirenberg} gives 
	\begin{align}
		\begin{aligned}
			\mbox{for} \; d = 2,\;  \norm{\nabla \boldsymbol{u}}_{
				L^{4/(1-2\delta)}(\Omega)}  &\leq C_{gn} \norm{\boldsymbol{u}}_{3/2 + \delta,\Omega}, \label{GN}\\
			\mbox{for} \; d = 3,\;  \norm{\nabla \boldsymbol{u}}_{L^{3/(1-\delta)}(\Omega)}  &\leq C_{gn} \norm{\boldsymbol{u}}_{3/2 + \delta,\Omega}, 
		\end{aligned}
	\end{align}
    Now, putting $(\boldsymbol{v}, \boldsymbol{s}) = (\boldsymbol{\psi}, \boldsymbol{\chi})$ in the linearized equation \eqref{linearizedEqns} and applying  the boundedness properties discussed in Theorem \ref{State.Wp.Reg} along with \eqref{CtyFy.1} and \eqref{CtynuT.1} yield
	\begin{align}
		\alpha_a \norm{\boldsymbol{\psi}}^2_{1,\Omega} &\leq C_{6_d} C_{3_d} M_{\boldsymbol{u}} \norm{\boldsymbol{\psi}}^2_{1,\Omega} + C_{\nu_T} C_{p_{2_d}} C_{gn} M \norm{\boldsymbol{\psi}}_{1,\Omega} \norm{\boldsymbol{\chi}}_{1,\Omega} + C_{F_{\boldsymbol{y}}} \norm{\boldsymbol{\psi}}_{1,\Omega} \norm{\boldsymbol{\chi}}_{1,\Omega} \nonumber  \\
		& ~~~~~ + \|\boldsymbol{\hat{f}}\|_{\boldsymbol{X}^*,\Omega}  \norm{\boldsymbol{\psi}}_{1,\Omega}, \label{LinearizedEq1}\\
		\hat{\alpha}_a \norm{\boldsymbol{\chi}}_{1,\Omega} &\leq C_{6_d} C_{3_d} M_{\boldsymbol{y}} \norm{\boldsymbol{\psi}}_{1,\Omega} + \|\boldsymbol{\tilde{f}}\|_{-1,\Omega}. \label{LinearizedEq2}
	\end{align}
	Substituting \eqref{LinearizedEq2} in \eqref{LinearizedEq1}, we get the desired bounds, given the condition in \eqref{LinearizedEquationConditions} is satisfied. 
	%	{\color{purple}The regularity of the weak solutions of \eqref{linearizedEqns} stated in \eqref{regularityLinearized} can be proven along the lines of the proof of Lemma \ref{LipschitzCtyLemma}. }
\end{proof}

\begin{theorem}[Existence and uniqueness of the linearized equations]\label{WellposednessLinearizedEquations}
	Let $(\bar{\boldsymbol{u}}, \bar{\boldsymbol{y}}) \in \boldsymbol{X} \cap \boldsymbol{H}^{3/2 + \delta}(\Omega) \times \left[H^1(\Omega) \cap H^{3/2 + \delta}(\Omega)\right]^2$, $\delta \in \big(0,\frac{1}{2}\big)$  be the states associated with the control $\bar{\boldsymbol{U}} \in \boldsymbol{\mathcal{U}}_{ad}$. Then under the condition \eqref{LinearizedEquationConditions}, for every $\boldsymbol{\hat{f}} \in \boldsymbol{X}^*$, $\boldsymbol{\tilde{f}} \in \left[H^{-1}(\Omega)\right]^2,$ there exists a unique solution $(\boldsymbol{\psi}, \boldsymbol{\chi}) \in \boldsymbol{X}  \times \left[H_0^1(\Omega)\right]^2,$ where $\boldsymbol{\chi} := \left(\chi^T, \chi^S\right),$ of the linearized equations \eqref{linearizedEqns}.
	
	%	Furthermore, the following a priori estimates hold,
	%	\begin{align}\label{EstLinearized}
		%		content...
		%	\end{align}
\end{theorem}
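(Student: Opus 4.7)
The plan is to combine the a priori estimates of Lemma~\ref{EnergyEstLinearized} with a Fredholm alternative argument. Since the system \eqref{linearizedEqns} is linear in $(\boldsymbol{\psi},\boldsymbol{\chi})$, uniqueness is immediate: the difference of two solutions with identical right-hand sides solves the homogeneous problem, and the bounds \eqref{M_psi}--\eqref{M_chi}, valid under the smallness assumption \eqref{LinearizedEquationConditions}, force this difference to vanish.

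For existence, I would split the left-hand side bilinear form on $\boldsymbol{X}\times[H_0^1(\Omega)]^2$ as $\mathcal{A}+\mathcal{B}$, where the principal part $\mathcal{A}((\boldsymbol{\psi},\boldsymbol{\chi}),(\boldsymbol{v},\boldsymbol{s})) := a(\bar{\boldsymbol{y}};\boldsymbol{\psi},\boldsymbol{v})+a_{\boldsymbol{y}}(\boldsymbol{\chi},\boldsymbol{s})$ is bounded by Lemma~\ref{ContinuityProp} and coercive (using the coercivity of $a(\cdot;\cdot,\cdot)$ and $a_{\boldsymbol{y}}(\cdot,\cdot)$ together with Poincar\'e's inequality on $[H_0^1(\Omega)]^2$). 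Hence $\mathcal{A}$ induces an isomorphism $A:\boldsymbol{X}\times[H_0^1(\Omega)]^2\to\boldsymbol{X}^*\times[H^{-1}(\Omega)]^2$ by Lax--Milgram. The residual $\mathcal{B}$ collects the six coupling/convective contributions $c(\boldsymbol{\psi},\bar{\boldsymbol{u}},\boldsymbol{v})$, $c(\bar{\boldsymbol{u}},\boldsymbol{\psi},\boldsymbol{v})$, $(\nu_T(\bar{T})\chi^T\nabla\bar{\boldsymbol{u}},\nabla\boldsymbol{v})$, $-(\boldsymbol{F}_{\boldsymbol{y}}(\bar{\boldsymbol{y}})\boldsymbol{\chi},\boldsymbol{v})$, $c_{\boldsymbol{y}}(\bar{\boldsymbol{u}},\boldsymbol{\chi},\boldsymbol{s})$ and $c_{\boldsymbol{y}}(\boldsymbol{\psi},\bar{\boldsymbol{y}},\boldsymbol{s})$. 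Rewriting the system as $(I+A^{-1}B)(\boldsymbol{\psi},\boldsymbol{\chi}) = A^{-1}(\hat{\boldsymbol{f}},\tilde{\boldsymbol{f}})$, the Fredholm alternative then promotes the established injectivity into bijectivity, yielding the required unique solution.

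The main technical step is therefore the compactness of $B:\boldsymbol{X}\times[H_0^1(\Omega)]^2\to\boldsymbol{X}^*\times[H^{-1}(\Omega)]^2$. Each of its six contributions couples the unknowns to a test function through multiplication by one of the fixed background coefficients $\bar{\boldsymbol{u}}$, $\nabla\bar{\boldsymbol{u}}$, $\bar{\boldsymbol{y}}$, $\nabla\bar{\boldsymbol{y}}$, $\nu_T(\bar{T})$ or $\boldsymbol{F}_{\boldsymbol{y}}(\bar{\boldsymbol{y}})$. Using the compact Sobolev embeddings $\boldsymbol{H}_0^1(\Omega)\hookrightarrow\boldsymbol{L}^p(\Omega)$ (Rellich--Kondrachov) with $p<\infty$ in 2D and $p<6$ in 3D, the standard convective and lower-order terms $c(\boldsymbol{\psi},\bar{\boldsymbol{u}},\cdot)$, $c(\bar{\boldsymbol{u}},\boldsymbol{\psi},\cdot)$, $c_{\boldsymbol{y}}(\bar{\boldsymbol{u}},\boldsymbol{\chi},\cdot)$, $c_{\boldsymbol{y}}(\boldsymbol{\psi},\bar{\boldsymbol{y}},\cdot)$ and $(\boldsymbol{F}_{\boldsymbol{y}}(\bar{\boldsymbol{y}})\boldsymbol{\chi},\cdot)$ are easily seen to yield compact operators in routine fashion.

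The delicate term is $(\nu_T(\bar{T})\chi^T\nabla\bar{\boldsymbol{u}},\nabla\boldsymbol{v})$, because the full gradient $\nabla\boldsymbol{v}$ appears and so the continuity estimate \eqref{CtynuT.1} does not by itself deliver compactness. Here I plan to exploit the extra regularity $\bar{\boldsymbol{u}}\in\boldsymbol{H}^{3/2+\delta}(\Omega)$ guaranteed by Theorem~\ref{State.Wp.Reg} together with the Gagliardo--Nirenberg inequality \eqref{GN} to place $\nabla\bar{\boldsymbol{u}}$ in some $\boldsymbol{L}^{p_1}(\Omega)$ with $p_1>2$ (in 2D) or $p_1>3$ (in 3D). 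Taking $p_2$ conjugate to $p_1$ in the sense $1/p_1+1/p_2=1/2$, the multiplication operator $\chi^T\mapsto\nu_T(\bar{T})\chi^T\nabla\bar{\boldsymbol{u}}$ from $H_0^1(\Omega)$ to $\boldsymbol{L}^2(\Omega)$ factors through the compact embedding $H_0^1(\Omega)\hookrightarrow L^{p_2}(\Omega)$, and pairing with $\nabla\boldsymbol{v}\in\boldsymbol{L}^2(\Omega)$ then produces a compact functional on $\boldsymbol{X}^*$. This closes the Fredholm argument and completes the proof.
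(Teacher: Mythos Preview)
Your proposal is correct and offers a genuinely different route from the paper's proof. The paper argues existence by Galerkin approximation: it defines a finite-dimensional operator $\boldsymbol{L}_n=(\boldsymbol{L}_n^X,\boldsymbol{L}_n^Y)$ on subspaces $\boldsymbol{X}_n\times\boldsymbol{Y}_n$, establishes continuity, and then shows the Brouwer-type positivity condition $[\boldsymbol{L}_n(\hat{\boldsymbol{w}}),\hat{\boldsymbol{w}}]>0$ on a large sphere by feeding in the a~priori bounds $M_{\boldsymbol{\psi}},M_{\boldsymbol{\chi}}$ from Lemma~\ref{EnergyEstLinearized}; existence on each level then follows from Lemma~1.4, Chapter~II of \cite{temam2001navier}, and one passes to the limit. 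Uniqueness is again read off from the a~priori estimates. In contrast, you isolate the coercive diagonal $\mathcal{A}=a(\bar{\boldsymbol{y}};\cdot,\cdot)+a_{\boldsymbol{y}}(\cdot,\cdot)$, invert it by Lax--Milgram, and treat all coupling and convective terms as a compact perturbation so that the Fredholm alternative upgrades the injectivity (from the same a~priori estimates) to surjectivity. Your identification of $(\nu_T(\bar{T})\chi^T\nabla\bar{\boldsymbol{u}},\nabla\boldsymbol{v})$ as the only nontrivial compactness check, and the use of $\nabla\bar{\boldsymbol{u}}\in\boldsymbol{L}^{p_1}$ with $p_1>d$ via \eqref{GN} to factor through a subcritical embedding $H_0^1\hookrightarrow L^{p_2}$, is exactly right. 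The paper's approach has the virtue of reusing verbatim the machinery from the nonlinear existence proof in \cite{DDF} and never needing compactness; yours is more intrinsic to the linear structure, arguably cleaner, and makes explicit why the extra regularity $\bar{\boldsymbol{u}}\in\boldsymbol{H}^{3/2+\delta}$ is essential for the linearized problem to be well posed as a compact perturbation of an elliptic system.
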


%In order to prove Theorem \ref{WellposednessLinearizedEquations}, we first prove the following intermediate lemma which shows that the smallness condition \eqref{StateUniquenessCondition} is sufficient for the well-posedness of the solutions of linearized equations.

%\begin{lemma}\label{EllipticityOfLinearizedEqns}
%	The condition $\alpha_a > C_6 C_3 M_{\boldsymbol{u}}$ is essential for the ellipticity of \eqref{linearizedEqns}. 
%\end{lemma}
%\begin{proof} We can show the assertion using \eqref{Prop:c2}, \eqref{Coer:a}, Lemma \ref{EnergyEst:State} and triangle inequality as follows 
%	\begin{align*}
	%		|a(\bar{\boldsymbol{y}}; \boldsymbol{u},\boldsymbol{u}) + c(\boldsymbol{u},\bar{\boldsymbol{u}},\boldsymbol{u})| &= |a(\bar{\boldsymbol{y}},\boldsymbol{u},\boldsymbol{u}) - c(\boldsymbol{u},\boldsymbol{u},\bar{\boldsymbol{u}})|\\ 
	%		&\geq |a(\bar{\boldsymbol{y}},\boldsymbol{u},\boldsymbol{u})| - |c(\boldsymbol{u},\boldsymbol{u},\bar{\boldsymbol{u}})| \\
	%		&\geq (\alpha_a - C_6 C_3 \norm{\bar{\boldsymbol{u}}}_{1,\Omega}) \norm{\boldsymbol{u}}^2_{1,\Omega}\\ 
	%		&\geq (\alpha_a - C_6 C_3 M_{\boldsymbol{u}}) \norm{\boldsymbol{u}}^2_{1,\Omega}. 
	%	\end{align*}
%\end{proof}

%\begin{remark}
%	Following the lines of proofs of Theorem \ref{Regularity} and Lemma \ref{LipschitzCtyLemma}, we can deduce that $\boldsymbol{\chi}$ is more regular. Infact, $\boldsymbol{\chi} \in \left[H^{3/2 + \delta}(\Omega)\right]^2$ for $\delta \in (0,-\frac{1}{2}) \cup (\frac{1}{2},0)$.
%\end{remark}

\begin{proof} Due to the linear nature of \eqref{linearizedEqns}, the existence of a unique solution follows similar to the proof of \cite[Theorem 3.6]{DDF} using Lemma 1.4 of Chapter II in \cite{temam2001navier} as long as the coercivity of the map $\boldsymbol{L}_n$ holds. Let $\boldsymbol{X}_n$ and $\boldsymbol{Y}_n$ be spaces as defined in the proof of Theorem \cite[Theorem 3.6]{DDF}. We define $\boldsymbol{L}_n := \boldsymbol{L}_n^X \times \boldsymbol{L}_n^Y$ such that
	\begin{align}
		\left[\boldsymbol{L}_n^X(\boldsymbol{\psi}), \boldsymbol{v}\right] = \left(\nabla \boldsymbol{L}_n^X(\boldsymbol{\psi}), \nabla \boldsymbol{v}\right)
		&= a(\bar{\boldsymbol{y}}; \boldsymbol{\psi}, \boldsymbol{v}) + c(\boldsymbol{\psi}, \bar{\boldsymbol{u}}, \boldsymbol{v}) + c(\bar{\boldsymbol{u}}, \boldsymbol{\psi}, \boldsymbol{v}) \nonumber\\
		&\qquad+ ((\nu_{T} (\bar{T})) \chi^T \; \nabla \bar{\boldsymbol{u}}, \nabla \boldsymbol{v}) - ((\boldsymbol{F}_{\boldsymbol{y}}(\bar{\boldsymbol{y}})) \boldsymbol{\chi}, \boldsymbol{v}) - \langle \boldsymbol{\hat{f}}, \boldsymbol{v} \rangle, \nonumber\\ 
		\left[\boldsymbol{L}_n^Y(\boldsymbol{\chi}), \boldsymbol{s}\right] = \left(\nabla \boldsymbol{L}_n^Y(\boldsymbol{\chi}), \nabla \boldsymbol{s}\right) 
		&= a_{\boldsymbol{y}}(\boldsymbol{\chi}, \boldsymbol{s}) + c_{\boldsymbol{y}}(\bar{\boldsymbol{u}}, \boldsymbol{\chi}, \boldsymbol{s}) + c_{\boldsymbol{y}}(\boldsymbol{\psi}, \bar{\boldsymbol{y}}, \boldsymbol{s}) - \langle \boldsymbol{\tilde{f}}, \boldsymbol{s} \rangle. \nonumber
	\end{align}
	The continuity of $\boldsymbol{L}_n$ can be deduced analogous to that of $\boldsymbol{P}_n$ in \cite[THeorem 3.6]{DDF} using the bounds discussed in Section \ref{CtsStateEq}, \eqref{CtyFy.1} and \eqref{CtynuT.1}. Similarly, it remains to show the coercivity of $\boldsymbol{L}_n$,
	\begin{align*}
		&\left[\boldsymbol{L}_n^X(\boldsymbol{\psi}), \boldsymbol{\psi}\right] \geq  \alpha_a \norm{\boldsymbol{\psi}}^2_{1,\Omega}  - C_{6_d} C_{3_d} M_{\boldsymbol{u}} \norm{\boldsymbol{\psi}}^2_{1,\Omega} - C_{\nu_T} C_{p_{2_d}} C_{gn} M \norm{\boldsymbol{\psi}}_{1,\Omega}  \norm{\boldsymbol{\chi}}_{1,\Omega} \nonumber\\
		&\hspace{2.5cm} - C_{F_{\boldsymbol{y}}} \norm{\boldsymbol{\psi}}_{1,\Omega} \norm{\boldsymbol{\chi}}_{1,\Omega} - \|\boldsymbol{\hat{f}}\|_{\boldsymbol{X}^*,\Omega} \norm{\boldsymbol{\psi}}_{1,\Omega},\\
		&\left[\boldsymbol{L}_n^Y(\boldsymbol{\chi}), \boldsymbol{\chi}\right] \geq \hat{\alpha}_a \norm{\boldsymbol{\chi}}^2_{1,\Omega} - C_{6_d} C_{3_d} M_{\boldsymbol{y}} \norm{\boldsymbol{\psi}}_{1,\Omega} \norm{\boldsymbol{\chi}}_{1,\Omega} - \|\boldsymbol{\tilde{f}}\|_{-1,\Omega} \norm{\boldsymbol{\chi}}_{1,\Omega}.
	\end{align*}
	It follows from \eqref{M_psi} and \eqref{M_chi} that $\left[\boldsymbol{L}_n^X(\boldsymbol{\psi}), \boldsymbol{\psi}\right] > 0$ for $\left[\boldsymbol{\psi}\right] = \norm{\boldsymbol{\psi}}^2_{1,\Omega} = \kappa_3$ sufficiently large  such that
	$$\kappa_3 > \left\{\frac{1}{\alpha_a} \left(C_{6_d} C_{3_d} M_{\boldsymbol{u}} M^2_{\boldsymbol{\psi}}+ C_{\nu_T} C_{p_{2_d}} C_{gn} M M_{\boldsymbol{\psi}} M_{\boldsymbol{\chi}} + C_{F_{\boldsymbol{y}}} M_{\boldsymbol{\psi}} M_{\boldsymbol{\chi}} + M_{\boldsymbol{\psi}} \|\boldsymbol{\hat{f}}\|_{X^{*},\Omega} \right)\right\},$$
	and $\left[\boldsymbol{L}_n^Y(\boldsymbol{\chi}), \boldsymbol{\chi}\right] > 0$ for $\left[\boldsymbol{\chi}\right] = \norm{\boldsymbol{\chi}}^2_{1,\Omega} = \kappa_4$ sufficiently large such that
	$$ \kappa_4 > \left\{\frac{1}{\hat{\alpha}_a} \left(C_{6_d} C_{3_d} M_{\boldsymbol{y}} M_{\boldsymbol{\psi}} M_{\boldsymbol{\chi}} + M_{\boldsymbol{\chi}} \|\boldsymbol{\tilde{f}}\|_{-1,\Omega}\right)\right\}.$$
	From the coercivity of $\boldsymbol{L}_n^X$ and $\boldsymbol{L}_n^Y$ and the definition of $\boldsymbol{L}_n$, we deduce  that $\left[\boldsymbol{L}_n(\hat{\boldsymbol{w}}), \hat{\boldsymbol{w}}\right] > 0$ for $\left[\hat{\boldsymbol{w}}\right] = \kappa_3 + \kappa_4 > 0$. 
\end{proof}

\vspace{0.2cm}
Moving next to the aspects of optimization, we denote the solution operator of \eqref{P:Sred} which maps $\boldsymbol{U} \mapsto (\boldsymbol{u}, \boldsymbol{y})$  by $G(\boldsymbol{U}) = (\boldsymbol{u}, \boldsymbol{y})$. In the following lemma, we discuss the Fr\'echet differentiability of the solution mapping.
\begin{lemma}[Fr\'echet differentiability of the solution mapping]\label{FrechetDifferentiability}
	The solution operator $G: \boldsymbol{L}^r(\Omega) \longrightarrow \boldsymbol{X} \cap \boldsymbol{H}^{3/2 + \delta}(\Omega) \times \left[H^{3/2 + \delta}(\Omega) \right]^2$ is Fr\'echet differentiable. In particular, $G$ is Fr\'echet differentiable  from $\boldsymbol{L}^2(\Omega)$ to $\boldsymbol{X} \cap \boldsymbol{H}^{3/2 + \delta} (\Omega) \times \left[H^{3/2+\delta}(\Omega)\right]^2 $. The derivative $G'(\bar{\boldsymbol{U}})\boldsymbol{h}= (\boldsymbol{\zeta}, \boldsymbol{\mu})$, where under the condition \eqref{LinearizedEquationConditions}, $(\boldsymbol{\zeta}, \boldsymbol{\mu})\in \boldsymbol{X}  \times \left[H_0^1(\Omega)\right]^2$ is the unique weak solution of 
	\begin{align}\label{PreAdjoint}
		\left\{
		\begin{aligned}
			& a(\bar{\boldsymbol{y}}; \boldsymbol{\zeta}, \boldsymbol{v}) + c(\boldsymbol{\zeta}, \bar{\boldsymbol{u}}, \boldsymbol{v}) + c(\bar{\boldsymbol{u}}, \boldsymbol{\zeta}, \boldsymbol{v}) \\
			&\qquad\qquad + ((\nu_{T} (\bar{T})) \mu^T \; \nabla \bar{\boldsymbol{u}}, \nabla \boldsymbol{v}) - ((\boldsymbol{F}_{\boldsymbol{y}}(\bar{\boldsymbol{y}})) \boldsymbol{\mu}, \boldsymbol{v})  = \left(\boldsymbol{h}, \boldsymbol{v}\right) \;\; \forall \;\; \boldsymbol{v} \in \boldsymbol{X}, \\
			& a_{\boldsymbol{y}}(\boldsymbol{\mu}, \boldsymbol{s}) + c_{\boldsymbol{y}}(\bar{\boldsymbol{u}}, \boldsymbol{\mu}, \boldsymbol{s}) + c_{\boldsymbol{y}}(\boldsymbol{\zeta}, \bar{\boldsymbol{y}}, \boldsymbol{s}) = 0  \;\; \forall \;\; \boldsymbol{s} \in \left[H_0^1(\Omega)\right]^2,
		\end{aligned}
		\right.
	\end{align}
	with $\boldsymbol{\mu} := (\mu^T, \mu^S)$, $\boldsymbol{U} \in \boldsymbol{\mathcal{U}}_{ad}, (\bar{\boldsymbol{u}}, \bar{\boldsymbol{y}}) := G(\bar{\boldsymbol{U}})$ and $\boldsymbol{h} \in \boldsymbol{L}^2(\Omega)$.
\end{lemma}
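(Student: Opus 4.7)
The plan is to construct the candidate derivative from the linearized system \eqref{PreAdjoint} and then verify the Fr\'echet remainder condition by combining Taylor expansions of the nonlinear terms with the continuous dependence estimates \eqref{stabilityH1}--\eqref{stabilityH3/2} from Theorem \ref{State.Wp.Reg}. First, existence and uniqueness of $(\boldsymbol{\zeta},\boldsymbol{\mu}) \in \boldsymbol{X} \times [H_0^1(\Omega)]^2$ solving \eqref{PreAdjoint} follow immediately from Theorem \ref{WellposednessLinearizedEquations} applied with $\boldsymbol{\hat{f}} = \boldsymbol{h} \in \boldsymbol{L}^2(\Omega) \hookrightarrow \boldsymbol{X}^*$ and $\boldsymbol{\tilde{f}} = \boldsymbol{0}$, so that the candidate linear map $\boldsymbol{h} \mapsto (\boldsymbol{\zeta},\boldsymbol{\mu})$ is well defined; the a priori bounds in Lemma \ref{EnergyEstLinearized} also show it is bounded from $\boldsymbol{L}^r(\Omega)$ (and hence from $\boldsymbol{L}^2(\Omega)$) into $\boldsymbol{X} \times [H_0^1(\Omega)]^2$.

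Next, for $\boldsymbol{h} \in \boldsymbol{L}^r(\Omega)$ small enough that $\bar{\boldsymbol{U}} + \boldsymbol{h} \in \boldsymbol{\mathcal{U}}_{ad}$ (and such that the smallness hypothesis \eqref{smalldata} persists), set $(\boldsymbol{u}_h,\boldsymbol{y}_h) := G(\bar{\boldsymbol{U}} + \boldsymbol{h})$ and define the remainder
\begin{align*}
\boldsymbol{\rho}_h := \boldsymbol{u}_h - \bar{\boldsymbol{u}} - \boldsymbol{\zeta}, \qquad \boldsymbol{\tau}_h := \boldsymbol{y}_h - \bar{\boldsymbol{y}} - \boldsymbol{\mu}.
\end{align*}
Subtracting the state equations \eqref{P:Sred} for $\bar{\boldsymbol{U}}$ and $\bar{\boldsymbol{U}} + \boldsymbol{h}$ and then subtracting \eqref{PreAdjoint}, one checks that $(\boldsymbol{\rho}_h,\boldsymbol{\tau}_h)$ solves the linearized system \eqref{linearizedEqns} with right-hand side
\begin{align*}
\langle \boldsymbol{\hat{f}}, \boldsymbol{v}\rangle &= -c(\boldsymbol{u}_h - \bar{\boldsymbol{u}}, \boldsymbol{u}_h - \bar{\boldsymbol{u}}, \boldsymbol{v}) - (R_\nu(T_h,\bar T)\nabla\boldsymbol{u}_h, \nabla\boldsymbol{v}) \\
&\qquad - ((\nu_T(\bar T)(T_h - \bar T))\nabla(\boldsymbol{u}_h-\bar{\boldsymbol{u}}), \nabla\boldsymbol{v}) + (R_F(\boldsymbol{y}_h,\bar{\boldsymbol{y}}), \boldsymbol{v}), \\
\langle \boldsymbol{\tilde{f}}, \boldsymbol{s}\rangle &= -c_{\boldsymbol{y}}(\boldsymbol{u}_h-\bar{\boldsymbol{u}}, \boldsymbol{y}_h-\bar{\boldsymbol{y}}, \boldsymbol{s}),
\end{align*}
where $R_\nu(T_h,\bar T) := \nu(T_h) - \nu(\bar T) - \nu_T(\bar T)(T_h - \bar T)$ and $R_F(\boldsymbol{y}_h,\bar{\boldsymbol{y}}) := \boldsymbol{F}(\boldsymbol{y}_h) - \boldsymbol{F}(\bar{\boldsymbol{y}}) - \boldsymbol{F}_{\boldsymbol{y}}(\bar{\boldsymbol{y}})(\boldsymbol{y}_h - \bar{\boldsymbol{y}})$ are Taylor remainders. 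By hypothesis $\nu$ and $\boldsymbol{F}$ admit bounded second derivatives, so integral-remainder Taylor estimates give the pointwise bounds $|R_\nu(T_h,\bar T)| \leq C |T_h - \bar T|^2$ and $|R_F(\boldsymbol{y}_h,\bar{\boldsymbol{y}})| \leq C |\boldsymbol{y}_h - \bar{\boldsymbol{y}}|^2$.

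The Gagliardo--Nirenberg and Sobolev embedding techniques already deployed in the proof of Theorem \ref{State.Wp.Reg} then yield bounds of the form
\begin{align*}
\|\boldsymbol{\hat f}\|_{\boldsymbol{X}^*,\Omega} + \|\boldsymbol{\tilde f}\|_{-1,\Omega} \leq C\bigl(\|\boldsymbol{u}_h - \bar{\boldsymbol{u}}\|_{1,\Omega}^2 + \|\boldsymbol{y}_h - \bar{\boldsymbol{y}}\|_{1,\Omega}^2\bigr),
\end{align*}
since the quadratic convection term is controlled through $c(\cdot,\cdot,\cdot)$, while $R_\nu$ and $R_F$ produce products of two $H^1$ factors using standard embeddings into $L^4$ (or $L^3$) in $d \leq 3$, with the viscosity-type term requiring the regularity $\bar{\boldsymbol{u}} \in \boldsymbol{H}^{3/2+\delta}(\Omega)$ and $\boldsymbol{u}_h \in \boldsymbol{H}^{3/2+\delta}(\Omega)$ handled exactly as in \eqref{main1}--\eqref{eq***}. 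Combining with \eqref{stabilityH1} gives
\begin{align*}
\|\boldsymbol{\hat f}\|_{\boldsymbol{X}^*,\Omega} + \|\boldsymbol{\tilde f}\|_{-1,\Omega} \leq C\|\boldsymbol{h}\|_{L^r(\Omega)}^2.
\end{align*}
Finally, applying Lemma \ref{EnergyEstLinearized} to $(\boldsymbol{\rho}_h,\boldsymbol{\tau}_h)$ yields $\|\boldsymbol{\rho}_h\|_{1,\Omega} + \|\boldsymbol{\tau}_h\|_{1,\Omega} \leq C\|\boldsymbol{h}\|_{L^r(\Omega)}^2$, which is Fr\'echet differentiability into $\boldsymbol{X} \times [H_0^1(\Omega)]^2$. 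To upgrade to the $\boldsymbol{H}^{3/2+\delta}$-norm, the same remainder system is interpreted as an elliptic problem for $(\boldsymbol{\rho}_h,\boldsymbol{\tau}_h)$ and the higher-order stability estimate \eqref{stabilityH3/2} (applied to the difference equation for $\boldsymbol{\rho}_h,\boldsymbol{\tau}_h$, exactly as in the last part of the proof of Theorem \ref{State.Wp.Reg}) upgrades the bound to $\|\boldsymbol{\rho}_h\|_{3/2+\delta,\Omega} + \|\boldsymbol{\tau}_h\|_{3/2+\delta,\Omega} \leq C\|\boldsymbol{h}\|_{L^r(\Omega)}^2$, establishing the claimed Fr\'echet differentiability. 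The differentiability from $\boldsymbol{L}^2(\Omega)$ then follows from the continuous embedding $\boldsymbol{L}^2(\Omega) \hookrightarrow \boldsymbol{L}^r(\Omega)$ under \eqref{RestrictionOnr_final}.

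The main obstacle, as elsewhere in this paper, will be estimating the $\nu$-remainder contribution $R_\nu(T_h,\bar T)\nabla\boldsymbol{u}_h$ in the $\boldsymbol{X}^*$- and $\boldsymbol{H}^{-1/2+\delta}$-norms: one cannot simply bound $R_\nu$ in $L^\infty$, and one must split using the fractional Leibniz rule together with the higher-order regularity $\boldsymbol{u}_h \in \boldsymbol{H}^{3/2+\delta}(\Omega)$ and the boundedness of $\nu_{TT}$, much as in the delicate treatment of the term $\|\bdiv((\nu(T_1)-\nu(T_2))\nabla\boldsymbol{u}_2)\|_{-1/2\pm\delta,\Omega}$ performed in the proof of Theorem \ref{State.Wp.Reg}; the quadratic nature of the remainder is what converts the linear continuous-dependence bound into the desired $O(\|\boldsymbol{h}\|^2_{L^r(\Omega)})$ rate.
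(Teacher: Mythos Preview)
Your proposal is correct and follows essentially the same strategy as the paper: write the Fr\'echet remainder $(\boldsymbol{\rho}_h,\boldsymbol{\tau}_h)$ as the solution of the linearized system \eqref{linearizedEqns} with quadratic-in-$(\boldsymbol{u}_h-\bar{\boldsymbol{u}},\boldsymbol{y}_h-\bar{\boldsymbol{y}})$ forcing, bound that forcing using the boundedness estimates of Section~\ref{CtsStateEq} together with the regularity and continuous-dependence results \eqref{stabilityH1}--\eqref{stabilityH3/2}, and conclude via Lemma~\ref{EnergyEstLinearized}. The paper organizes the viscosity remainder slightly differently (it keeps $\nabla\bar{\boldsymbol{u}}$ in the $\nu_{TT}$ term and evaluates $\nu_T$ at an intermediate point, whereas you keep $\nabla\boldsymbol{u}_h$ and evaluate $\nu_T$ at $\bar T$), but both decompositions are valid and lead to the same $O(\|\boldsymbol{h}\|_{L^r}^2)$ estimate.

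Two small remarks. First, your displayed bound $\|\boldsymbol{\hat f}\|_{\boldsymbol{X}^*}+\|\boldsymbol{\tilde f}\|_{-1}\leq C(\|\boldsymbol{u}_h-\bar{\boldsymbol{u}}\|_{1,\Omega}^2+\|\boldsymbol{y}_h-\bar{\boldsymbol{y}}\|_{1,\Omega}^2)$ is a slight overstatement: as you yourself note in your closing paragraph, the cross term $(\nu_T(\bar T)(T_h-\bar T)\nabla(\boldsymbol{u}_h-\bar{\boldsymbol{u}}),\nabla\boldsymbol{v})$ and the $R_\nu$ term genuinely require the $\boldsymbol{H}^{3/2+\delta}$ norm of $\boldsymbol{u}_h-\bar{\boldsymbol{u}}$ (respectively of $\boldsymbol{y}_h-\bar{\boldsymbol{y}}$ in 3D), so one must invoke \eqref{stabilityH3/2} and not only \eqref{stabilityH1}; the paper makes this explicit in its bound \eqref{FrechetDiff_Eq1} and the estimate immediately following it. Second, there is no need to restrict to $\bar{\boldsymbol{U}}+\boldsymbol{h}\in\boldsymbol{\mathcal{U}}_{ad}$: $G$ is defined on all of $\boldsymbol{L}^r(\Omega)$ satisfying the smallness condition, and Fr\'echet differentiability is a local statement in $\boldsymbol{L}^r(\Omega)$.
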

\begin{proof}
	Since $\boldsymbol{L}^r(\Omega)$ is a linear space, we have $\bar{\boldsymbol{U} }+ \boldsymbol{h} \in \boldsymbol{L}^r(\Omega),$ for all $\bar{\boldsymbol{U} },\boldsymbol{h} \in \boldsymbol{L}^r(\Omega)$. For any perturbation $\boldsymbol{h}$, set $(\boldsymbol{u}, \boldsymbol{y}) = G(\bar{\boldsymbol{U}} + \boldsymbol{h})$. We find that the difference $( \boldsymbol{\delta\zeta},  \boldsymbol{\delta\mu}) := (\boldsymbol{u} - \bar{\boldsymbol{u}}, \boldsymbol{y} - \bar{\boldsymbol{y}})$, where $ \boldsymbol{\delta\mu} := ( \delta\mu^T,  \delta\mu^S)$ is the weak solution of (under assumption analogous to \eqref{LinearizedEquationConditions})
	\begin{align*}
		\left\{
		\begin{aligned}
			& a(\bar{\boldsymbol{y}};  \boldsymbol{\delta\zeta}, \boldsymbol{v}) + c( \boldsymbol{\delta\zeta}, \bar{\boldsymbol{u}}, \boldsymbol{v}) + c(\bar{\boldsymbol{u}},  \boldsymbol{\delta\zeta}, \boldsymbol{v}) \\
			&\qquad\qquad+ ((\nu_{T} (\bar{T}))  \delta\mu^T \; \nabla \bar{\boldsymbol{u}}, \nabla \boldsymbol{v}) - ((\boldsymbol{F}_{\boldsymbol{y}}(\bar{\boldsymbol{y}}))  \boldsymbol{\delta\mu}, \boldsymbol{v})  = \left(\boldsymbol{h}, \boldsymbol{v}\right)_{r,r'} + R_{\boldsymbol{u}} \;\; \forall \;\; \boldsymbol{v} \in \boldsymbol{X}, \\
			& a_{\boldsymbol{y}}(\boldsymbol{\delta\mu}, \boldsymbol{s}) + c_{\boldsymbol{y}}(\bar{\boldsymbol{u}}, \boldsymbol{\delta\mu}, \boldsymbol{s}) + c_{\boldsymbol{y}}(\boldsymbol{\delta\zeta}, \bar{\boldsymbol{y}}, \boldsymbol{s}) = R_{\boldsymbol{y}}  \;\; \forall \;\; \boldsymbol{s} \in \left[H_0^1(\Omega)\right]^2.
		\end{aligned}
		\right.
	\end{align*}
	Here, due to the inherent nonlinear nature of the coupled problem, $R_{\boldsymbol{u}}$ and $R_{\boldsymbol{y}}$ consist of quadratic and mixed higher-order remainder terms and are defined for $\Theta \in (0,1),$ respectively as follows:
	\begin{align*}
		&R_{\boldsymbol{u}} := c(\boldsymbol{u} - \bar{\boldsymbol{u}}, \boldsymbol{u} - \bar{\boldsymbol{u}}, \boldsymbol{v}) + \frac{1}{2!} (F_{\boldsymbol{y}\boldsymbol{y}} (\boldsymbol{y} + \Theta (\boldsymbol{y} - \bar{\boldsymbol{y}})) (\boldsymbol{y} - \bar{\boldsymbol{y}})^2, \boldsymbol{v}) \\
		&~~~~~~~~~- \frac{1}{2!}(\nu_{TT}(T + \Theta (T - \bar{T})) (T - \bar{T})^2 \nabla \bar{\boldsymbol{u}}, \nabla \boldsymbol{v}) - (\nu_T(T + \Theta (T - \bar{T})) (T - \bar{T}) \nabla (\boldsymbol{u} - \bar{\boldsymbol{u}}), \nabla \boldsymbol{v}), \\
		&R_{\boldsymbol{y}} := c_{\boldsymbol{y}}(\boldsymbol{u} - \bar{\boldsymbol{u}}, \boldsymbol{y} - \bar{\boldsymbol{y}}, \boldsymbol{s}).
	\end{align*} 
	We split $ \boldsymbol{\delta\zeta}$ and $ \boldsymbol{\delta\mu}$ into $ \boldsymbol{\delta\zeta} = \boldsymbol{\zeta} + r_{\boldsymbol{\zeta}}$ and $ \boldsymbol{\delta\mu} = \boldsymbol{\mu} + r_{\boldsymbol{\mu}}$, respectively, where $(\boldsymbol{\zeta}, \boldsymbol{\mu})$ and $(r_{\boldsymbol{\zeta}}, r_{\boldsymbol{\mu}})$ are weak solutions of (under assumptions analogous to \eqref{LinearizedEquationConditions})
	\begin{align*}
		\left\{
		\begin{aligned}
			& a(\bar{\boldsymbol{y}}; \boldsymbol{\zeta}, \boldsymbol{v}) + c(\boldsymbol{\zeta}, \bar{\boldsymbol{u}}, \boldsymbol{v}) + c(\bar{\boldsymbol{u}}, \boldsymbol{\zeta}, \boldsymbol{v}) + ((\nu_{T} (\bar{T})) \mu^T \; \nabla \bar{\boldsymbol{u}}, \nabla \boldsymbol{v}) - ((\boldsymbol{F}_{\boldsymbol{y}}(\bar{\boldsymbol{y}}))  \boldsymbol{\mu}, \boldsymbol{v})  = \left(\boldsymbol{h}, \boldsymbol{v}\right)_{r,r'} \; \forall \; \boldsymbol{v} \in \boldsymbol{X}, \\
			& a_{\boldsymbol{y}}(\boldsymbol{\mu}, \boldsymbol{s}) + c_{\boldsymbol{y}}(\bar{\boldsymbol{u}}, \boldsymbol{\mu}, \boldsymbol{s}) + c_{\boldsymbol{y}}(\boldsymbol{\zeta}, \bar{\boldsymbol{y}}, \boldsymbol{s}) = 0  \; \forall \; \boldsymbol{s} \in \left[H_0^1(\Omega)\right]^2,
		\end{aligned}
		\right.
	\end{align*} 
	and
	\begin{align*}
		\left\{
		\begin{aligned}
			& a(\bar{\boldsymbol{y}}; r_{\boldsymbol{\zeta}}, \boldsymbol{v}) + c(r_{\boldsymbol{\zeta}}, \bar{\boldsymbol{u}}, \boldsymbol{v}) + c(\bar{\boldsymbol{u}}, r_{\boldsymbol{\zeta}}, \boldsymbol{v}) + ((\nu_{T} (\bar{T})) r_{\mu^T} \; \nabla \bar{\boldsymbol{u}}, \nabla \boldsymbol{v}) - ((\boldsymbol{F}_{\boldsymbol{y}}(\bar{\boldsymbol{y}})) r_{\boldsymbol{\mu}}, \boldsymbol{v})  =  R_{\boldsymbol{u}} \; \forall \; \boldsymbol{v} \in \boldsymbol{X}, \\
			& a_{\boldsymbol{y}}(r_{\boldsymbol{\mu}}, \boldsymbol{s}) + c_{\boldsymbol{y}}(\bar{\boldsymbol{u}}, r_{\boldsymbol{\mu}}, \boldsymbol{s}) + c_{\boldsymbol{y}}(r_{\boldsymbol{\zeta}}, \bar{\boldsymbol{y}}, \boldsymbol{s}) = R_{\boldsymbol{y}}  \; \forall \; \boldsymbol{s} \in \left[H_0^1(\Omega)\right]^2,
		\end{aligned}
		\right.
	\end{align*}
	respectively.
	An application of Lemma \ref{EnergyEstLinearized} with $\langle \boldsymbol{\hat{f}}, \boldsymbol{v}\rangle = (\boldsymbol{h}, \boldsymbol{v})_{r,r'}$ and $\langle \boldsymbol{\tilde{f}}, \boldsymbol{v}\rangle = 0$ yields,
	$$\norm{\boldsymbol{\zeta}}_{1,\Omega} + \norm{\boldsymbol{\mu}}_{1,\Omega} \leq C \norm{\boldsymbol{h}}_{L^r(\Omega)}.$$
	This shows that the mapping $\boldsymbol{h} \mapsto (\boldsymbol{\zeta}, \boldsymbol{\mu})$ is continuous from $\boldsymbol{L}^r(\Omega)$ to $\boldsymbol{X} \cap \boldsymbol{H}^{3/2 + \delta}(\Omega) \times \left[H_0^1(\Omega) \cap H^{3/2+\delta}(\Omega)\right]^2$. If we show that 
	\begin{align}\label{FrechetDerivative}
		\frac{\norm{(\boldsymbol{u} - \bar{\boldsymbol{u}}, \boldsymbol{y} - \bar{\boldsymbol{y}}) - (\boldsymbol{\zeta}, \boldsymbol{\mu})}_{\boldsymbol{X}  \times \left[H_0^1(\Omega)\right]^2}}{\norm{\boldsymbol{h}}_{L^r(\Omega)}} \longrightarrow 0 \;\; \mbox{as} \;\; \norm{\boldsymbol{h}}_{L^r(\Omega)} \longrightarrow 0,
	\end{align}
	then $(\boldsymbol{\zeta}, \boldsymbol{\mu})$ will be Fr\'echet derivative of $G$ at $\bar{\boldsymbol{U}}$ in the direction of $\boldsymbol{h}$. In order to show this, let us first study the boundedness of remainder terms. By using the boundedness properties discussed in Section \ref{} and utilizing the regularity of $\boldsymbol{u},$ we obtain
	\begin{align}
		&|c(\boldsymbol{u} - \bar{\boldsymbol{u}}, \boldsymbol{u} - \bar{\boldsymbol{u}}, \boldsymbol{v})| \leq C_{6_d} C_{3_d} \norm{\boldsymbol{u} - \bar{\boldsymbol{u}}}^2_{1,\Omega} \norm{\boldsymbol{v}}_{1,\Omega} =  C_{6_d} C_{3_d} \norm{G(\bar{\boldsymbol{U}} + \boldsymbol{h}) - G(\bar{\boldsymbol{U}})}^2_{1,\Omega} \norm{\boldsymbol{v}}_{1,\Omega}, \nonumber\\
		&|(F_{\boldsymbol{y} \boldsymbol{y}} \left(\bar{\boldsymbol{y}} + \Theta (\boldsymbol{y} - \bar{\boldsymbol{y}})\right) \left(\boldsymbol{y} - \bar{\boldsymbol{y}})^2, \boldsymbol{v}\right)| \leq \norm{F_{\boldsymbol{y} \boldsymbol{y}} \left(\bar{\boldsymbol{y}} + \Theta (\boldsymbol{y} - \bar{\boldsymbol{y}})\right)}_{\infty,\Omega} \norm{(\boldsymbol{y} - \bar{\boldsymbol{y}})^2}_{0,\Omega} \norm{\boldsymbol{v}}_{0,\Omega} \nonumber\\
		&~~\hspace{3.8cm}\leq C_{F_{\boldsymbol{y} \boldsymbol{y}}}  \norm{\boldsymbol{y} - \bar{\boldsymbol{y}}}^2_{L^4(\Omega)} \norm{\boldsymbol{v}}_{1,\Omega} \leq C_{F_{\boldsymbol{y} \boldsymbol{y}}} C_{4_d} \norm{G(\bar{\boldsymbol{U}} + \boldsymbol{h}) - G(\bar{\boldsymbol{U}})}^2_{1,\Omega} \norm{\boldsymbol{v}}_{1,\Omega},\nonumber\\
		&|(\nu_{TT} \left(\bar{T} + \Theta (T - \bar{T})\right) (T - \bar{T})^2 \nabla \bar{\boldsymbol{u}}, \nabla \boldsymbol{v})| \nonumber\\
		&~~~\leq \norm{\nu_{TT}\left(\bar{T} + \Theta (T - \bar{T})\right)}_{\infty,\Omega} \norm{\nabla \bar{\boldsymbol{u}}}_{L^{r_1}(\Omega)} \norm{T - \bar{T}}^2_{L^{2r_2}(\Omega)} \norm{\nabla \boldsymbol{v}}_{0,\Omega} \nonumber\\
		&~~~\leq\begin{cases}
			C_{\nu_{TT}} C_{gn} C_{r_{2_d}} C_{r_{3_d}} \norm{\bar{\boldsymbol{u}}}_{3/2 +\delta, \Omega} \norm{\boldsymbol{y} - \bar{\boldsymbol{y}}}^2_{1,\Omega} \norm{\boldsymbol{v}}_{1,\Omega} \;\;\; \mbox{in 2D}\nonumber\\
			C_{\nu_{TT}} C  \norm{\bar{\boldsymbol{u}}}_{3/2+\delta,\Omega} \norm{\boldsymbol{y} - \bar{\boldsymbol{y}}}^2_{3/2-\delta,\Omega} \norm{\boldsymbol{v}}_{1,\Omega} \;\;\; \mbox{in 3D} \nonumber
			&\end{cases}\\
		&~~~\leq C\begin{cases}
			\norm{G(\bar{\boldsymbol{U}} + \boldsymbol{h}) - G(\bar{\boldsymbol{U}})}^2_{1,\Omega} \norm{\boldsymbol{v}}_{1,\Omega} \;\;\; \mbox{in 2D}\label{FrechetDiff_Eq1}\\\
			\norm{G(\bar{\boldsymbol{U}} + \boldsymbol{h}) - G(\bar{\boldsymbol{U}})}^2_{3/2-\delta,\Omega} \norm{\boldsymbol{v}}_{1,\Omega}  \;\;\; \mbox{in 3D} ,
			&\end{cases}
	\end{align}
	where the last inequality holds with $r_1 = \frac{4}{1-2\delta}$ and $r_2 = \frac{4}{1+2\delta}$ in two dimensions and in three dimensions it can be settled by choosing $r_1 = \frac{3}{1-\delta}$ and $r_2 = \frac{6}{1+2\delta}$ and applying \eqref{FractionalSobolevEmbedding}. In order to bound the last term in $R_{\boldsymbol{u}},$ we use the Gagliardo-Nirenberg inequality followed by an application of the Sobolev embedding (see Theorem 5, Chapter 5 in \cite{mitrovic1997fundamentals})
	%interpolation inequality (see Theorem 6, Chapter 5 in \cite{mitrovic1997fundamentals})   
	and the stability estimates in \eqref{stabilityH3/2} to get
	\begin{align*}
		&|(\nu_T(T + \Theta (T - \bar{T})) (T - \bar{T}) \nabla (\boldsymbol{u} - \bar{\boldsymbol{u}}), \nabla \boldsymbol{v})| \\
		&~~\leq \norm{\nu_T(T + \Theta (T - \bar{T}))}_{\infty,\Omega} \norm{T - \bar{T}}_{L^6(\Omega)} \norm{\nabla (\boldsymbol{u} - \bar{\boldsymbol{u}})}_{L^3(\Omega)} \norm{\nabla \boldsymbol{v}}_{0,\Omega}, \\
		&~~\leq\begin{cases}
			C_{\nu_T} C_{6_d} C_{gn} \norm{T - \bar{T}}_{1,\Omega} \norm{\boldsymbol{u} - \bar{\boldsymbol{u}}}_{{4/3},\Omega} \norm{\boldsymbol{v}}_{1,\Omega} \;\;\; \mbox{in}\;\; \mbox{2D}\\
			C_{\nu_T} C_{6_d} C_{gn} \norm{T - \bar{T}}_{1,\Omega} \norm{\boldsymbol{u} - \bar{\boldsymbol{u}}}_{{3/2},\Omega} \norm{\boldsymbol{v}}_{1,\Omega} \;\;\; \mbox{in}\;\; \mbox{3D}
			&\end{cases}\\
		%	&~~\leq C_{\nu_T} C_{6_d} C_{gn} \norm{T - \bar{T}}_{1,\Omega}  \norm{\boldsymbol{u} - \bar{\boldsymbol{u}}}^{\alpha}_{1,\Omega} \norm{\boldsymbol{u} - \bar{\boldsymbol{u}}}^{1-\alpha}_{{{3/2}+\delta},\Omega} \norm{\boldsymbol{v}}_{1,\Omega} \\
		&~~\leq C_{\nu_T} C_{6_d} C_{gn} C \norm{\boldsymbol{y} - \bar{\boldsymbol{y}}}_{1,\Omega} \norm{\boldsymbol{u} - \bar{\boldsymbol{u}}}_{3/2+\delta,\Omega} \norm{\boldsymbol{v}}_{1,\Omega} \\
		&~~\leq  C_{\nu_T} C_{6_d} C_{gn} C \norm{G(\bar{\boldsymbol{U}} + \boldsymbol{h}) - G(\bar{\boldsymbol{U}})}_{1,\Omega} \norm{G(\bar{\boldsymbol{U}} + \boldsymbol{h}) - G(\bar{\boldsymbol{U}})}_{3/2+\delta,\Omega} \norm{\boldsymbol{v}}_{1,\Omega}.
	\end{align*}
	%here, $\alpha \in \big(\frac{1}{3},\frac{2}{3}\big)$ in two dimensions and $\alpha \in \big(0, \frac{3}{5}\big)$ in three dimensions. 
	Combining the above bounds,  one can obtain the following bound on $R_{\boldsymbol{u}}$: 
	\begin{align}
		|R_{\boldsymbol{u}}| &\leq C \left(\norm{G(\bar{\boldsymbol{U}} + \boldsymbol{h}) - G(\bar{\boldsymbol{U}})}^2_{1,\Omega} + \norm{G(\bar{\boldsymbol{U}} + \boldsymbol{h}) - G(\bar{\boldsymbol{U}})}^2_{3/2-\delta,\Omega} \right. \nonumber\\
		&\quad\quad \left.+ \norm{G(\bar{\boldsymbol{U}} + \boldsymbol{h}) - G(\bar{\boldsymbol{U}})}_{1,\Omega} \norm{G(\bar{\boldsymbol{U}} + \boldsymbol{h}) - G(\bar{\boldsymbol{U}})}_{3/2+\delta,\Omega}\right) \norm{\boldsymbol{v}}_{1,\Omega}. \label{Ru}
	\end{align}

	Similarly, we can get the following bound on $R_{\boldsymbol{y}}$:
	\begin{align}\label{Ry}
		|R_{\boldsymbol{y}}| \leq C_{6_d} C_{3_d} \norm{\boldsymbol{u} - \bar{\boldsymbol{u}}}_{1,\Omega} \norm{\boldsymbol{y} - \bar{\boldsymbol{y}}}_{1,\Omega} \norm{\boldsymbol{s}}_{1,\Omega} \leq C \norm{G(\bar{\boldsymbol{U}} + \boldsymbol{h}) - G(\bar{\boldsymbol{U}})}^2_{1,\Omega} \norm{\boldsymbol{s}}_{1,\Omega}.
	\end{align}
	
	Following steps analogous to the proof of Lemma \ref{EnergyEstLinearized} and using \eqref{Ru}, \eqref{Ry}, we can readily obtain the following estimates under the condition \eqref{LinearizedEquationConditions}:
	\begin{align}
		\norm{r_{\boldsymbol{\zeta}}}_{1,\Omega} + \norm{r_{\boldsymbol{\mu}}}_{1,\Omega} &\leq C \left(\norm{G(\bar{\boldsymbol{U}} + \boldsymbol{h}) - G(\bar{\boldsymbol{U}})}^2_{1,\Omega}  + \norm{G(\bar{\boldsymbol{U}} + \boldsymbol{h}) - G(\bar{\boldsymbol{U}})}^2_{3/2-\delta,\Omega} \right.\nonumber\\ 
		&\qquad\qquad \left.+ \norm{G(\bar{\boldsymbol{U}} + \boldsymbol{h}) - G(\bar{\boldsymbol{U}})}_{1,\Omega} \norm{G(\bar{\boldsymbol{U}} + \boldsymbol{h}) - G(\bar{\boldsymbol{U}})}_{3/2+\delta,\Omega}\right). \label{rury}
	\end{align}
	Now from the definition of Fr\'echet derivative (see Section 2.6 in \cite{troltzsch2010optimal}) and Lipschitz continuity of the solution mapping (see \eqref{stabilityH1} and \eqref{stabilityH3/2}) in \eqref{rury}, one can deduce the following:
	\begin{align*}
		\norm{(\boldsymbol{u} - \bar{\boldsymbol{u}}, \boldsymbol{y} - \bar{\boldsymbol{y}}) - (\boldsymbol{\zeta}, \boldsymbol{\mu})}_{\boldsymbol{X}  \times \left[H_0^1(\Omega)\right]^2} 
		&= \norm{(r_{\boldsymbol{\zeta}}, r_{\boldsymbol{\mu}})}_{\boldsymbol{X}  \times \left[H_0^1(\Omega)\right]^2}
		= \norm{r_{\boldsymbol{\zeta}}}_{1,\Omega} + \norm{r_{\boldsymbol{\mu}}}_{1,\Omega} \leq C \norm{\boldsymbol{h}}^2_{L^r(\Omega)}.
	\end{align*}
	Thus \eqref{FrechetDerivative} is fulfilled and $G$ is Fr\'echet differentiable with derivative $G'(\bar{\boldsymbol{U}}) \boldsymbol{h}= (\boldsymbol{\zeta}, \boldsymbol{\mu})$. Consequently, we can observe that $G : \boldsymbol{L}^2(\Omega) \longrightarrow \boldsymbol{X} \cap \boldsymbol{H}^{3/2 + \delta} (\Omega) \times \left[H^{3/2+\delta}(\Omega)\right]^2$ is Fr\'echet differentiable. Since all the calculations will also hold for $(\boldsymbol{h}, \boldsymbol{v})$ for all $\boldsymbol{h} \in \boldsymbol{L}^2(\Omega)$ and $\boldsymbol{v} \in \boldsymbol{X}$.  Hence, the proof is completed. 
\end{proof}

Due to the presence of nonlinearity in the governing equation, the optimization problem \eqref{P:CF}-\eqref{P:GE} is non-convex, and hence the optimal solution $(\bar{\boldsymbol{u}}, \bar{\boldsymbol{y}}, \bar{\boldsymbol{U}}) $ obtained in Theorem \ref{ExistenceOptimalControl}  may not be unique without imposing additional assumptions. Theoretically, many global and local minima are possible. We use the notion of a locally optimal reference control to study the norms where local optimality can be assured  and show that it satisfies first order necessary optimality conditions.

\begin{definition}[Local optimal control]\label{local_optimal_control}
	A control $\bar{\boldsymbol{U}} \in \boldsymbol{\mathcal{U}}_{ad}$ is called \emph{locally optimal} in the sense of $\boldsymbol{L}^s(\Omega)$, if there exists a positive constant $\rho$ such that $$J(\bar{\boldsymbol{u}}, \bar{\boldsymbol{y}}, \bar{\boldsymbol{U}}) \leq J(\boldsymbol{u}, \boldsymbol{y}, \boldsymbol{U})$$
	holds for all $\boldsymbol{U} \in \boldsymbol{\mathcal{U}}_{ad}$ with $\norm{\bar{\boldsymbol{U}} - \boldsymbol{U}}_{L^s(\Omega)} \leq \rho$, where $\bar{\boldsymbol{u}}, \bar{\boldsymbol{y}}$ and $\boldsymbol{u}, \boldsymbol{y}$ denote the states corresponding to $\bar{\boldsymbol{U}}$ and $\boldsymbol{U}$, respectively. 
\end{definition}

\begin{remark}[Choices of $(r, r', s)$ and second order sufficient optimality conditions \eqref{SSC}]\label{choice}
	%	The condition \eqref{ControlInterpolation} implies a further restriction on $r$, namely, $r \leq 2.$ 
%	In \\this remark we discuss the various possible choices of the triplet $(r, r', s)$ and their significance with respect to \ref{SSC}. These conditions are central in showing the local optimality of the control, it's uniqueness as a local solution and to drive the convergence of numerical schemes, making them essential for a complete numerical analysis of non-convex optimal control problem.
	
	%which we develop in our coming work \cite{oc_ddf_II} along with the control problem's numerical analysis.  
%	We are interested in the following scenarios of $(r,r',s)$, which are crucial in obtaining \eqref{SSC} and depend on the following interpolation inequality:
%	\begin{align*}%\label{ControlInterpolation}
%		\mbox{for all} \;\; \boldsymbol{U} \in \boldsymbol{L}^s(\Omega), \;\; \norm{\boldsymbol{U}}^2_{{L}^r(\Omega)} \leq \norm{\boldsymbol{U}}_{L^1(\Omega)} \norm{\boldsymbol{U}}_{L^{s}(\Omega)}.
%	\end{align*}
	The choice $(r,r',s) = (2,2,\infty)$, leads to local optimality of the reference control in an $\boldsymbol{L}^{\infty}$-neighborhood (see for reference \cite{Bonnans} and section 3.3 in \cite{WachsmuthSSC}). This means more or less that jumps of the optimal control have to be known \textit{apriori}. To surmount this difficulty the alternative configuration, proposed in   \cite{WachsmuthSSC}, sets $(r,r',s) = (4/3,4,2)$. This ensures local optimality of a reference control in a $\boldsymbol{L}^2$-neighborhood. Furthermore, the ensuing analysis allows to use weaker norms than $\boldsymbol{L}^{\infty}.$ The choice $(r,r',s) = (2,2,2)$ has been explored in \cite{Casas_OC-NSE} for the optimal control of stationary Navier-Stokes equations with pointwise control constraints, that is, $(U_{a_j} < U_{b_j} \in \mathbb{R} \cup \left\{\pm \infty\right\})$. The authors here derive optimality conditions of Fritz-John type (nonqualified form) and second order sufficient optimality conditions under the assumptions that the domain in $\mathbb{R}^d$ is open bounded and of class $C^2,$ and the local optimal solution pair is non-singular, which overcomes the strong small data assumption otherwise needed for the uniqueness of state equation. 
%	Given this advantage, we will pursue these ideas in the context of \eqref{P:CF}-\eqref{P:GE} in a future work. 
%	In what follows we derive optimality conditions of Karush-Kuhn-Tucker type (qualified form) for our problem \eqref{P:CF}-\eqref{P:GE} under small data condition \eqref{smalldata} on a class of bounded Lipschitz domains $($see Theorem \ref{Regularity} and \eqref{RestrictionOnr_final}$)$.
\end{remark}

	\begin{theorem}[First order necessary optimality condition]\label{Fopt_AdjointEqn}
		Let $\bar{\boldsymbol{U}}$ be a \emph{local optimal control} in the sense of $L^2(\Omega)$ for the optimization problem \eqref{P:CF}-\eqref{P:GE} corresponding to the states $\bar{\boldsymbol{u}} = \boldsymbol{u}(\bar{\boldsymbol{U}})$ and $\bar{\boldsymbol{y}} = \boldsymbol{y}(\bar{\boldsymbol{U}})$. Then there exists a unique solution $(\bar{\boldsymbol{\varphi}}, \bar{\boldsymbol{\eta}}) \in \boldsymbol{X}  \times \left[H_0^1(\Omega)\right]^2$ of the adjoint equation for all $(\boldsymbol{v}, \boldsymbol{s}) \in \boldsymbol{X} \times \left[H_0^1(\Omega)\right]^2$,
		{\small\begin{align}\label{adjointEq}
			\left\{
			\begin{aligned}
				&a(\bar{\boldsymbol{y}},\bar{\boldsymbol{\varphi}},\boldsymbol{v}) - c(\bar{\boldsymbol{u}}, \bar{\boldsymbol{\varphi}},\boldsymbol{v}) + c(\boldsymbol{v},\bar{\boldsymbol{u}},\bar{\boldsymbol{\varphi}}) + c_{\boldsymbol{y}}(\boldsymbol{v},\bar{\boldsymbol{y}},\bar{\boldsymbol{\eta}}) = (\bar{\boldsymbol{u}}-\boldsymbol{u}_d,\boldsymbol{v}), \\
				&a_{\boldsymbol{y}}(\bar{\boldsymbol{\eta}},\boldsymbol{s}) - c_{\boldsymbol{y}}(\bar{\boldsymbol{u}},\bar{\boldsymbol{\eta}},\boldsymbol{s}) + (\nu_{T}(\bar{T}) \nabla \bar{\boldsymbol{u}}:\nabla \bar{\boldsymbol{\varphi}}, \boldsymbol{s}) - ((\boldsymbol{F}_{\boldsymbol{y}}(\bar{\boldsymbol{y}}))^{\top} \bar{\boldsymbol{\varphi}}, \boldsymbol{s}) = (\bar{\boldsymbol{y}}-\boldsymbol{y}_d,\boldsymbol{s}),
			\end{aligned}
			\right.
		\end{align}	}
		where  $(\nu_{T}(\bar{T}) \nabla \bar{\boldsymbol{u}} : \nabla \bar{\boldsymbol{\varphi}}, \boldsymbol{s})=\left(\binom{\nu_{T}(\bar{T}) \nabla \bar{\boldsymbol{u}}:\nabla \bar{\boldsymbol{\varphi}}}{0}, \binom{s_1}{s_2}\right)$. 
		Furthermore, the following variational inequality is  satisfied:
		\begin{align}\label{vi}
			\left(\lambda \bar{\boldsymbol{U}} + \bar{\boldsymbol{\varphi}}, \boldsymbol{U} - \bar{\boldsymbol{U}}\right) \geq 0 \;\; \forall \;\; \boldsymbol{U} \in \boldsymbol{\mathcal{U}}_{ad}.
		\end{align}
	\end{theorem}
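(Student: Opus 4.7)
The plan is to invoke the standard adjoint-based approach built on the Fréchet differentiability already established in Lemma \ref{FrechetDifferentiability}. Define the reduced cost $j(\boldsymbol{U}) := J(G(\boldsymbol{U}), \boldsymbol{U})$. Since $\bar{\boldsymbol{U}}$ is locally optimal in the $\boldsymbol{L}^2$-sense and $\boldsymbol{\mathcal{U}}_{ad}$ is convex, considering the admissible variation $\bar{\boldsymbol{U}} + t(\boldsymbol{U} - \bar{\boldsymbol{U}})$ for $t \in [0,1]$ and letting $t \to 0^+$ yields $j'(\bar{\boldsymbol{U}})(\boldsymbol{U} - \bar{\boldsymbol{U}}) \geq 0$ for all $\boldsymbol{U} \in \boldsymbol{\mathcal{U}}_{ad}$. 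Applying the chain rule together with Lemma \ref{FrechetDifferentiability}, for $\boldsymbol{h} := \boldsymbol{U} - \bar{\boldsymbol{U}}$,
\begin{align*}
j'(\bar{\boldsymbol{U}})\boldsymbol{h} = (\bar{\boldsymbol{u}} - \boldsymbol{u}_d, \boldsymbol{\zeta}) + (\bar{\boldsymbol{y}} - \boldsymbol{y}_d, \boldsymbol{\mu}) + \lambda(\bar{\boldsymbol{U}}, \boldsymbol{h}),
\end{align*}
where $(\boldsymbol{\zeta}, \boldsymbol{\mu}) = G'(\bar{\boldsymbol{U}})\boldsymbol{h}$ is the unique weak solution of \eqref{PreAdjoint}.

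Next I would derive the adjoint system \eqref{adjointEq} by a duality argument designed to eliminate $(\boldsymbol{\zeta}, \boldsymbol{\mu})$ from the gradient expression. Test \eqref{PreAdjoint} with $(\boldsymbol{v}, \boldsymbol{s}) = (\bar{\boldsymbol{\varphi}}, \bar{\boldsymbol{\eta}})$ and add the resulting equations. Using the skew-symmetry identity $c(\bar{\boldsymbol{u}}, \boldsymbol{\zeta}, \bar{\boldsymbol{\varphi}}) = -c(\bar{\boldsymbol{u}}, \bar{\boldsymbol{\varphi}}, \boldsymbol{\zeta})$ from \eqref{Prop:TrilinearForm} (valid since $\bar{\boldsymbol{u}} \in \boldsymbol{X}$), together with the transposition identities $((\nu_T(\bar{T})) \mu^T \nabla \bar{\boldsymbol{u}}, \nabla \bar{\boldsymbol{\varphi}}) = (\nu_T(\bar{T}) \nabla \bar{\boldsymbol{u}} : \nabla \bar{\boldsymbol{\varphi}}, \mu^T)$ and $((\boldsymbol{F}_{\boldsymbol{y}}(\bar{\boldsymbol{y}})) \boldsymbol{\mu}, \bar{\boldsymbol{\varphi}}) = ((\boldsymbol{F}_{\boldsymbol{y}}(\bar{\boldsymbol{y}}))^{\top} \bar{\boldsymbol{\varphi}}, \boldsymbol{\mu})$, one can reshuffle all terms so that $(\boldsymbol{\zeta}, \boldsymbol{\mu})$ appear only in the test slots. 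Requiring the reshuffled form to equal $(\bar{\boldsymbol{u}} - \boldsymbol{u}_d, \boldsymbol{\zeta}) + (\bar{\boldsymbol{y}} - \boldsymbol{y}_d, \boldsymbol{\mu})$ for every admissible $(\boldsymbol{\zeta}, \boldsymbol{\mu})$ forces the defining system of $(\bar{\boldsymbol{\varphi}}, \bar{\boldsymbol{\eta}})$ to be precisely \eqref{adjointEq}. The remaining right-hand side of the summed identity is then $(\boldsymbol{h}, \bar{\boldsymbol{\varphi}})$, hence $(\bar{\boldsymbol{u}} - \boldsymbol{u}_d, \boldsymbol{\zeta}) + (\bar{\boldsymbol{y}} - \boldsymbol{y}_d, \boldsymbol{\mu}) = (\bar{\boldsymbol{\varphi}}, \boldsymbol{h})$.

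For the well-posedness of \eqref{adjointEq}, the system is linear and differs from \eqref{linearizedEqns} only by the sign of the $\bar{\boldsymbol{u}}$-convection terms (now $-c(\bar{\boldsymbol{u}}, \bar{\boldsymbol{\varphi}}, \boldsymbol{v})$ and $-c_{\boldsymbol{y}}(\bar{\boldsymbol{u}}, \bar{\boldsymbol{\eta}}, \boldsymbol{s})$) and by the transposition of the coupling terms. Testing with $(\bar{\boldsymbol{\varphi}}, \bar{\boldsymbol{\eta}})$, the diagonal contributions $c(\bar{\boldsymbol{u}}, \bar{\boldsymbol{\varphi}}, \bar{\boldsymbol{\varphi}})$ and $c_{\boldsymbol{y}}(\bar{\boldsymbol{u}}, \bar{\boldsymbol{\eta}}, \bar{\boldsymbol{\eta}})$ vanish by \eqref{Prop:TrilinearForm}, the bilinear forms $a(\bar{\boldsymbol{y}}; \cdot, \cdot)$ and $a_{\boldsymbol{y}}(\cdot, \cdot)$ provide coercivity with constants $\alpha_a$ and $\hat{\alpha}_a$, and the remaining cross terms $c(\boldsymbol{v}, \bar{\boldsymbol{u}}, \bar{\boldsymbol{\varphi}})$, $c_{\boldsymbol{y}}(\boldsymbol{v}, \bar{\boldsymbol{y}}, \bar{\boldsymbol{\eta}})$, $(\nu_T(\bar{T}) \nabla \bar{\boldsymbol{u}} : \nabla \bar{\boldsymbol{\varphi}}, \boldsymbol{s})$ and $((\boldsymbol{F}_{\boldsymbol{y}}(\bar{\boldsymbol{y}}))^{\top} \bar{\boldsymbol{\varphi}}, \boldsymbol{s})$ are controlled exactly as in \eqref{CtyFy.1}, \eqref{CtynuT.1} using the regularity of $(\bar{\boldsymbol{u}}, \bar{\boldsymbol{y}})$ from \eqref{MuMy}. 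Since the right-hand side $(\bar{\boldsymbol{u}} - \boldsymbol{u}_d, \cdot) + (\bar{\boldsymbol{y}} - \boldsymbol{y}_d, \cdot)$ defines elements of $\boldsymbol{X}^*$ and $[H^{-1}(\Omega)]^2$ respectively, the Galerkin plus Brouwer fixed-point construction carried out in the proof of Theorem \ref{WellposednessLinearizedEquations} applies essentially verbatim and delivers a unique $(\bar{\boldsymbol{\varphi}}, \bar{\boldsymbol{\eta}}) \in \boldsymbol{X} \times [H_0^1(\Omega)]^2$ under the same smallness condition \eqref{LinearizedEquationConditions}. The variational inequality \eqref{vi} then follows immediately by inserting the identity $(\bar{\boldsymbol{u}} - \boldsymbol{u}_d, \boldsymbol{\zeta}) + (\bar{\boldsymbol{y}} - \boldsymbol{y}_d, \boldsymbol{\mu}) = (\bar{\boldsymbol{\varphi}}, \boldsymbol{h})$ into the expression for $j'(\bar{\boldsymbol{U}})\boldsymbol{h}$ and invoking $j'(\bar{\boldsymbol{U}})(\boldsymbol{U} - \bar{\boldsymbol{U}}) \geq 0$.

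The main obstacle I anticipate is not analytical but bookkeeping: the algebraic reshuffle in Step~2 must correctly transpose three structurally different couplings simultaneously — the temperature-dependent viscosity term (which couples $\mu^T$ into the momentum equation through $\nabla \bar{\boldsymbol{u}}$), the buoyancy coupling through $\boldsymbol{F}_{\boldsymbol{y}}(\bar{\boldsymbol{y}})$, and the two convective couplings in the trilinear forms $c$ and $c_{\boldsymbol{y}}$. Each requires a different identity (pointwise transposition for the matrix-valued coefficients, \eqref{Prop:TrilinearForm} for the convection), and the presence of the cross-diffusion matrix $\boldsymbol{D}$ (assumed positive definite but not symmetric) means one must be careful that $a_{\boldsymbol{y}}$ in the adjoint corresponds to the transposed operator, which still inherits the same coercivity constant $\alpha_2$ because $\boldsymbol{s}^\top \boldsymbol{D} \boldsymbol{s} = \boldsymbol{s}^\top \boldsymbol{D}^\top \boldsymbol{s}$. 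Once these identifications are verified, the coercivity estimate for the adjoint closes under the same smallness condition \eqref{LinearizedEquationConditions} already in force.
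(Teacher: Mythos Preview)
Your proposal is correct and follows essentially the same approach as the paper: define the reduced functional, use local optimality and convexity of $\boldsymbol{\mathcal{U}}_{ad}$ to obtain $j'(\bar{\boldsymbol{U}})(\boldsymbol{U}-\bar{\boldsymbol{U}})\ge 0$, test the linearized system \eqref{PreAdjoint} by $(\bar{\boldsymbol{\varphi}},\bar{\boldsymbol{\eta}})$ and the adjoint system \eqref{adjointEq} by $(\boldsymbol{\zeta},\boldsymbol{\mu})$, and match terms via the transposition identities (the paper records these in Remark~\ref{adjointCalculation}) to obtain $(\bar{\boldsymbol{u}}-\boldsymbol{u}_d,\boldsymbol{\zeta})+(\bar{\boldsymbol{y}}-\boldsymbol{y}_d,\boldsymbol{\mu})=(\bar{\boldsymbol{\varphi}},\boldsymbol{U}-\bar{\boldsymbol{U}})$, while well-posedness of the adjoint is handled exactly as in Theorem~\ref{WellposednessLinearizedEquations}. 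Your observation about the transpose of $\boldsymbol{D}$ in $a_{\boldsymbol{y}}$ (and that coercivity survives because $\boldsymbol{s}^\top\boldsymbol{D}\boldsymbol{s}=\boldsymbol{s}^\top\boldsymbol{D}^\top\boldsymbol{s}$) is a valid point the paper leaves implicit.
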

	
	\begin{proof}
		Note that we are dealing with the constrained control  case, that is, $\boldsymbol{\mathcal{U}}_{ad}\neq \boldsymbol{L}^2(\Omega)$. 	Let $\tilde{G}: \boldsymbol{L}^2(\Omega) \longrightarrow \boldsymbol{X} \times \left[H^1(\Omega)\right]^2$ denote the solution operator $G$ restricted to $\boldsymbol{L}^2(\Omega)$.  Then the reduced objective functional \eqref{P:CF} can be written as
		$$j(\boldsymbol{U}) := J(\tilde{G}(\boldsymbol{U}), \boldsymbol{U}) = \frac{1}{2} \|\tilde{G}(\boldsymbol{U}) - \boldsymbol{\mathcal{Y}}_d\|^2_{0,\Omega} + \frac{\lambda}{2} \norm{\boldsymbol{U}}^2_{0,\Omega},$$
		where $\boldsymbol{\mathcal{Y}_d} := (\boldsymbol{u}_d, \boldsymbol{y}_d)$.	Due to Lemma \ref{FrechetDifferentiability}, we know that $\tilde{G}$ is Fr\'echet differentiable, and $\boldsymbol{\mathcal{U}}_{ad}$ is a closed convex subset of $\boldsymbol{L}^2(\Omega)$,  we have the following necessary condition for $\bar{\boldsymbol{U}}$ to be a local optimum of $j(\boldsymbol{U})$,
		$$j'(\bar{\boldsymbol{U}})(\boldsymbol{U} - \bar{\boldsymbol{U}}) \geq 0 \;\; \forall \;\; \boldsymbol{U} \in {\boldsymbol{\mathcal{U}}}_{ad},$$
		%	$$J'(\tilde{G}(\bar{\boldsymbol{U}}), \bar{\boldsymbol{U}})(\boldsymbol{U} - \bar{\boldsymbol{U}},\boldsymbol{0}) \geq 0 \;\; \forall \;\; \boldsymbol{U} \in {\boldsymbol{\mathcal{U}}}_{ad},$$
		which can be rewritten for all  $\boldsymbol{U} \in \boldsymbol{\mathcal{U}}_{ad}$ as,
		\begin{align}\label{Fonoc_Eq1}
			\left(\tilde{G}(\bar{\boldsymbol{U}}) - \boldsymbol{\mathcal{Y}}_d, \tilde{G}'(\bar{\boldsymbol{U}})(\boldsymbol{U} - \bar{\boldsymbol{U}})\right) + \lambda (\bar{\boldsymbol{U}},\boldsymbol{U} - \bar{\boldsymbol{U}}) \geq 0.
		\end{align}
		Set $(\boldsymbol{\zeta},\boldsymbol{\mu}) = \tilde{G}'(\bar{\boldsymbol{U}})(\boldsymbol{U} - \bar{\boldsymbol{U}})$, then $(\boldsymbol{\zeta}, \boldsymbol{\mu})$ satisfies \eqref{PreAdjoint} with $\boldsymbol{h}$  replaced by $\boldsymbol{U} - \bar{\boldsymbol{U}}$. Let $(\bar{\boldsymbol{\varphi}}, \bar{\boldsymbol{\eta}})$ be the solution of \eqref{adjointEq} and its existence can be argued as in the proof of Theorem \ref{WellposednessLinearizedEquations}. Testing \eqref{PreAdjoint} by $(\bar{\boldsymbol{\varphi}}, \bar{\boldsymbol{\eta}})$, we get
		%	\begin{align}\label{Fonoc_Eq2}
			%		\begin{aligned}
				%			& a(\bar{\boldsymbol{y}}; \boldsymbol{\zeta}, \bar{\boldsymbol{\varphi}}) + c(\boldsymbol{\zeta}, \bar{\boldsymbol{u}}, \bar{\boldsymbol{\varphi}}) + c(\bar{\boldsymbol{u}}, \boldsymbol{\zeta}, \bar{\boldsymbol{\varphi}}) + c_{\boldsymbol{y}}(\boldsymbol{\zeta}, \bar{\boldsymbol{y}}, \bar{\boldsymbol{\varphi}}) = (\boldsymbol{h}, \bar{\boldsymbol{\varphi}}), \\
				%			& a_{\boldsymbol{y}}(\boldsymbol{\mu}, \bar{\boldsymbol{\eta}}) + c_{\boldsymbol{y}}(\bar{\boldsymbol{u}}, \boldsymbol{\mu},\bar{\boldsymbol{\eta}}) + ((\nu_{T} (\bar{T})) \boldsymbol{\mu} \; \nabla \bar{\boldsymbol{u}}, \nabla \bar{\boldsymbol{\eta}}) - ((\boldsymbol{F}_{\boldsymbol{y}}(\bar{\boldsymbol{y}})) \boldsymbol{\mu}, \bar{\boldsymbol{\eta}}) =  0.
				%		\end{aligned}
			%	\end{align}	
		{\small
			\begin{align}\label{Fonoc_Eq2}
			\left\{
			\begin{aligned}
				& a(\bar{\boldsymbol{y}}; \boldsymbol{\zeta}, \bar{\boldsymbol{\varphi}}) + c(\boldsymbol{\zeta}, \bar{\boldsymbol{u}}, \bar{\boldsymbol{\varphi}}) + c(\bar{\boldsymbol{u}}, \boldsymbol{\zeta}, \bar{\boldsymbol{\varphi}}) + ((\nu_{T} (\bar{T})) \mu^T \; \nabla \bar{\boldsymbol{u}}, \nabla \bar{\boldsymbol{\varphi}}) - ((\boldsymbol{F}_{\boldsymbol{y}}(\bar{\boldsymbol{y}})) \boldsymbol{\mu}, \bar{\boldsymbol{\varphi}})  = \left(\boldsymbol{U} - \bar{\boldsymbol{U}}, \bar{\boldsymbol{\varphi}}\right), \\
				& a_{\boldsymbol{y}}(\boldsymbol{\mu}, \bar{\boldsymbol{\eta}}) + c_{\boldsymbol{y}}(\bar{\boldsymbol{u}}, \boldsymbol{\mu}, \bar{\boldsymbol{\eta}}) + c_{\boldsymbol{y}}(\boldsymbol{\zeta}, \bar{\boldsymbol{y}}, \bar{\boldsymbol{\eta}}) = 0,
			\end{aligned}
			\right.
		\end{align}}
		where $\mu^T$ is first component of $\boldsymbol{\mu}$.	Testing \eqref{adjointEq} by $(\boldsymbol{\zeta}, \boldsymbol{\mu})$ yields
		\begin{align}\label{Fonoc_Eq3}
			\left\{
			\begin{aligned}
				&a(\bar{\boldsymbol{y}},\bar{\boldsymbol{\varphi}},\boldsymbol{\zeta}) - c(\bar{\boldsymbol{u}}, \bar{\boldsymbol{\varphi}},\boldsymbol{\zeta}) + c(\boldsymbol{\zeta},\bar{\boldsymbol{u}},\bar{\boldsymbol{\varphi}}) + c_{\boldsymbol{y}}(\boldsymbol{\zeta},\bar{\boldsymbol{y}},\bar{\boldsymbol{\eta}}) = (\bar{\boldsymbol{u}}-\boldsymbol{u}_d,\boldsymbol{\zeta}), \\
				&a_{\boldsymbol{y}}(\bar{\boldsymbol{\eta}},\boldsymbol{\mu}) - c_{\boldsymbol{y}}(\bar{\boldsymbol{u}},\bar{\boldsymbol{\eta}},\boldsymbol{\mu}) + (\nu_{T}(\bar{T}) \nabla \bar{\boldsymbol{u}}:\nabla \bar{\boldsymbol{\varphi}}, \mu^T) - ((\boldsymbol{F}_{\boldsymbol{y}}(\bar{\boldsymbol{y}}))^{\top} \bar{\boldsymbol{\varphi}}, \boldsymbol{\mu}) = (\bar{\boldsymbol{y}}-\boldsymbol{y}_d,\boldsymbol{\mu}).
			\end{aligned}
			\right.
		\end{align}	
		Subtracting \eqref{Fonoc_Eq2} from \eqref{Fonoc_Eq3}, using the calculations in Remark \ref{adjointCalculation} below and simplifying, one can deduce the following:
		\begin{align*}
			(\boldsymbol{U} - \bar{\boldsymbol{U}}, (\bar{\boldsymbol{\varphi}},\bar{\boldsymbol{\eta}})) = 
			((\bar{\boldsymbol{u}},\bar{\boldsymbol{y}})-(\boldsymbol{u}_d, \boldsymbol{y}_d),(\boldsymbol{\zeta},\boldsymbol{\mu})) = 
			(\tilde{G}(\bar{\boldsymbol{U}}) - \boldsymbol{\mathcal{Y}}_d,\tilde{G}'(\bar{\boldsymbol{U}})(\boldsymbol{U} - \bar{\boldsymbol{U}})),
		\end{align*} 
		Substituting in \eqref{Fonoc_Eq1}, we get
		$$\left(\lambda \bar{\boldsymbol{U}} + \bar{\boldsymbol{\varphi}}, \boldsymbol{U} - \bar{\boldsymbol{U}}\right) \geq 0 \;\; \forall \;\; \boldsymbol{U} \in \boldsymbol{\mathcal{U}}_{ad},$$
		%	$$\left(\lambda (\bar{\boldsymbol{U}},\boldsymbol{0}) + (\bar{\boldsymbol{\varphi}},\bar{\boldsymbol{\eta}}), (\boldsymbol{U} - \bar{\boldsymbol{U}}\right) \geq 0 \;\; \forall \;\; \boldsymbol{U} \in \boldsymbol{\mathcal{U}}_{ad},$$
		which is the variational inequality associated with the control.
	\end{proof}
	\begin{remark}[Derivation of adjoint equation]\label{adjointCalculation}
		Let $((\bar{\boldsymbol{u}} \cdot \nabla)\boldsymbol{\zeta},\bar{\boldsymbol{\varphi}}) + ((\boldsymbol{\zeta} \cdot \nabla) \bar{\boldsymbol{u}}, \bar{\boldsymbol{\varphi}}) = (\boldsymbol{\hat{L}}(\bar{\boldsymbol{u}})\boldsymbol{\zeta},\bar{\boldsymbol{\varphi}}) + (\boldsymbol{\tilde{L}}(\bar{\boldsymbol{u}}) \boldsymbol{\zeta}, \bar{\boldsymbol{\varphi}}).$
		Then using integration by parts and the fact that $\bar{\boldsymbol{u}} \in \boldsymbol{X}$, $\boldsymbol{\zeta} \in \boldsymbol{H}_0^1(\Omega)$ and \eqref{Prop:TrilinearForm}, we have
		{\small\begin{align*}
			&(\boldsymbol{\hat{L}}(\bar{\boldsymbol{u}}) \boldsymbol{\zeta}, \boldsymbol{\varphi}) = \sum_{i,j = 1}^{d} \int_{\Omega} \bar{u}_i \frac{\partial \zeta_j}{\partial x_i} \varphi_j \; dx 
			= - \sum_{i,j=1}^{d} \int_{\Omega} \bar{u}_i \zeta_j \frac{\partial \varphi_j}{\partial x_i} \; dx\Rightarrow(\boldsymbol{\zeta}, \boldsymbol{\hat{L}}^*(\bar{\boldsymbol{u}})\boldsymbol{\varphi}) = - (\boldsymbol{\zeta}, (\bar{\boldsymbol{u}} \cdot \nabla) \boldsymbol{\varphi}) . \\
			&(\boldsymbol{\tilde{L}}(\bar{\boldsymbol{u}}) \boldsymbol{\zeta},\bar{\boldsymbol{\varphi}}) = ((\nabla \bar{\boldsymbol{u}}) \boldsymbol{\zeta}, \bar{\boldsymbol{\varphi}})\Rightarrow  (\boldsymbol{\zeta}, \boldsymbol{\tilde{L}}^*(\bar{\boldsymbol{u}}) \bar{\boldsymbol{\varphi}}) = (\boldsymbol{\zeta}, (\nabla \bar{\boldsymbol{u}})^{\top} \bar{\boldsymbol{\varphi}}).
		\end{align*}}
		Similar calculations hold for $c_{\boldsymbol{y}}(\cdot,\cdot,\cdot)$ also.
		%	The term $(\boldsymbol{L}(\bar{\boldsymbol{u}}) \boldsymbol{\zeta}, \boldsymbol{v})$ can be re-written as,
		%	$.$ 
		%	Thus, we get that $.$ Analogous calculations hold for $c_{\boldsymbol{y}}(\cdot, \cdot, \cdot)$. 
	\end{remark}
	\begin{remark}[Characterization of optimal control]
		Moreover, using a projection on the admissible control set $\boldsymbol{\mathcal{U}}_{ad}$,
		\begin{align}\label{ae_pointwise_projection}
			\mathcal{P}_{\boldsymbol{\mathcal{U}}_{ad}}: \boldsymbol{\mathcal{U}} \longrightarrow \boldsymbol{\mathcal{U}}_{ad}, 	\;\; \mathcal{P}_{\boldsymbol{\mathcal{U}}_{ad}} (U_j(x)) = \max(U_{a_j}(x), \min(U_{b_j}(x), U_j(x))),
		\end{align}
		the variational inequality \eqref{vi} can be expressed component-wise as follows for a.e. $x \in \Omega$:
		\begin{align}\label{projection_formula}
			\bar{U}_j(x) =  \mathcal{P}_{\boldsymbol{\mathcal{U}}_{ad}} \left(- \frac{1}{\lambda} \bar{\varphi}_j(x)\right)=\max\left(U_{a_j}(x), \min\left(U_{b_j}(x), - \frac{1}{\lambda} \bar{\varphi}_j(x)\right)\right),
		\end{align}
		where the subscript $j = 1, \dots, d$ (cf. \cite{troltzsch2010optimal}).
	\end{remark}
	\begin{remark}[Regularity of adjoint]\label{adjointRegularity}
		Similar to the governing equation, we can recover the adjoint pressure, denoted by $\xi$ in the reduced adjoint formulation \eqref{adjointEq} by using the ``de Rham's Theorem"
		, which is to find 
		$(\bar{\boldsymbol{\varphi}}, \xi, \bar{\boldsymbol{\eta}}) \in \boldsymbol{H}_0^1(\Omega) \times L_0^2(\Omega) \times [H_0^1(\Omega)]^2$ for all $(\boldsymbol{v}, q, \boldsymbol{s}) \in \boldsymbol{H}_0^1(\Omega) \times L_0^2(\Omega) \times [H_0^1(\Omega)]^2$ such that
		\begin{align}\label{P:A}
			\left\{
			\begin{aligned}
				a(\bar{\boldsymbol{y}},\bar{\boldsymbol{\varphi}},\boldsymbol{v}) - c(\bar{\boldsymbol{u}}, \bar{\boldsymbol{\varphi}},\boldsymbol{v}) + c(\boldsymbol{v},\bar{\boldsymbol{u}},\bar{\boldsymbol{\varphi}}) + b(\boldsymbol{v}, \xi) + c_{\boldsymbol{y}}(\boldsymbol{v},\bar{\boldsymbol{y}},\bar{\boldsymbol{\eta}}) &= (\bar{\boldsymbol{u}}-\boldsymbol{u}_d,\boldsymbol{v}),\\
				b(\bar{\boldsymbol{\varphi}},q) &= 0,\\
				a_{\boldsymbol{y}}(\bar{\boldsymbol{\eta}},\boldsymbol{s}) - c_{\boldsymbol{y}}(\bar{\boldsymbol{u}},\bar{\boldsymbol{\eta}},\boldsymbol{s}) + (\nu_{T}(\bar{T}) \nabla \bar{\boldsymbol{u}} : \nabla \bar{\boldsymbol{\varphi}}, \boldsymbol{s}) - ((\boldsymbol{F}_{\boldsymbol{y}}(\bar{\boldsymbol{y}}))^{\top} \bar{\boldsymbol{\varphi}}, \boldsymbol{s}) &= (\bar{\boldsymbol{y}}-\boldsymbol{y}_d,\boldsymbol{s}).
			\end{aligned}	
			\right.
		\end{align}
	Furthermore, similar to the state equation for $\boldsymbol{U} \in \boldsymbol{L}^r(\Omega),\; \boldsymbol{y}^D \in [H^{1+\delta}(\Gamma)]^2, (\boldsymbol{u}, p, \boldsymbol{y}) \in [\boldsymbol{H}_0^1(\Omega) \cap \boldsymbol{H}^{3/2+\delta}(\Omega)] \times [L^2_0(\Omega) \cap H^{1/2+\delta}(\Omega)] \times [H^{3/2+ \delta}(\Omega)]^2, \delta \in (0,1/2)$, the weak solution to \eqref{P:A} satisfies  
	$$(\bar{\boldsymbol{\varphi}}, \xi, \bar{\boldsymbol{\eta}}) \in [\boldsymbol{H}_0^1(\Omega) \cap \boldsymbol{H}^{3/2+\delta}(\Omega)] \times [L^2_0(\Omega) \cap H^{1/2+\delta}(\Omega)] \times [H_0^1(\Omega) \cap H^{3/2+ \delta}(\Omega)]^2,$$
	and
	$$\norm{\bar{\boldsymbol{\varphi}}}_{3/2+\delta,\Omega} + \norm{\bar{\boldsymbol{\eta}}}_{3/2+\delta,\Omega} \leq  C (\bar{\boldsymbol{u}}, \bar{\boldsymbol{y}}, {\boldsymbol{y}^D}, \bar{\boldsymbol{U}}, \boldsymbol{u}_d, \boldsymbol{y}_d) =: \bar{M}.$$
	\end{remark}
	
	The following estimate can be proven along the lines of Lemma \ref{EnergyEstLinearized}.
	\begin{lemma}[Energy estimates]\label{EnergyEstAdjoint}
		The adjoint state $(\bar{\boldsymbol{\varphi}}, \bar{\boldsymbol{\eta}})$, given by \eqref{adjointEq}, satisfies the following a priori estimate for some positive constants  $C_{\boldsymbol{\varphi}}$ and $C_{\boldsymbol{\eta}}$ provided \eqref{LinearizedEquationConditions} holds:
		{\small\begin{align*}
			\norm{\bar{\boldsymbol{\varphi}}}_{1,\Omega} \leq C_{\boldsymbol{\varphi}} \left(\norm{\bar{\boldsymbol{u}} - \boldsymbol{u}_d}_{0, \Omega} + \norm{\bar{\boldsymbol{y}} - \boldsymbol{y}_d}_{0,\Omega}\right) := M_{\boldsymbol{\varphi}}, 
			\norm{\bar{\boldsymbol{\eta}}}_{1,\Omega} \leq C_{\boldsymbol{\eta}} \left(\norm{\bar{\boldsymbol{u}} - \boldsymbol{u}_d}_{0, \Omega} + \norm{\bar{\boldsymbol{y}} - \boldsymbol{y}_d}_{0,\Omega}\right) := M_{\boldsymbol{\eta}}.
		\end{align*}}
%		where, $C_{\boldsymbol{\varphi}}$ and $C_{\boldsymbol{\eta}}$ are positive constants provided \eqref{LinearizedEquationConditions} holds.
	\end{lemma}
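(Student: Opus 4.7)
The plan is to mirror the structure of the proof of Lemma~\ref{EnergyEstLinearized}, since the adjoint system~\eqref{adjointEq} is the formal adjoint of the linearized system~\eqref{linearizedEqns} and so the bilinear terms pair up in the same way after testing. Concretely, I would test the first equation of \eqref{adjointEq} with $\boldsymbol{v}=\bar{\boldsymbol{\varphi}}\in\boldsymbol{X}$ and the second with $\boldsymbol{s}=\bar{\boldsymbol{\eta}}\in[H_0^1(\Omega)]^2$. Invoking the coercivity of $a(\bar{\boldsymbol{y}};\cdot,\cdot)$ on $\boldsymbol{X}$ and of $a_{\boldsymbol{y}}(\cdot,\cdot)$ on $[H_0^1(\Omega)]^2$ gives the leading $\alpha_a\|\bar{\boldsymbol{\varphi}}\|_{1,\Omega}^2$ and $\hat{\alpha}_a\|\nabla\bar{\boldsymbol{\eta}}\|_{0,\Omega}^2$ contributions, while $c(\bar{\boldsymbol{u}},\bar{\boldsymbol{\varphi}},\bar{\boldsymbol{\varphi}})=0$ and $c_{\boldsymbol{y}}(\bar{\boldsymbol{u}},\bar{\boldsymbol{\eta}},\bar{\boldsymbol{\eta}})=0$ by \eqref{Prop:TrilinearForm} and the fact that $\bar{\boldsymbol{u}}\in\boldsymbol{X}$.

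Next I would bound the cross terms using the continuity estimates from Lemma~\ref{ContinuityProp} and the a priori bounds \eqref{MuMy}. The term $c(\bar{\boldsymbol{\varphi}},\bar{\boldsymbol{u}},\bar{\boldsymbol{\varphi}})$ contributes $C_{6_d}C_{3_d}M_{\boldsymbol{u}}\|\bar{\boldsymbol{\varphi}}\|_{1,\Omega}^2$, and $c_{\boldsymbol{y}}(\bar{\boldsymbol{\varphi}},\bar{\boldsymbol{y}},\bar{\boldsymbol{\eta}})$ contributes $C_{6_d}C_{3_d}M_{\boldsymbol{y}}\|\bar{\boldsymbol{\varphi}}\|_{1,\Omega}\|\bar{\boldsymbol{\eta}}\|_{1,\Omega}$. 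The delicate term is $(\nu_T(\bar{T})\nabla\bar{\boldsymbol{u}}:\nabla\bar{\boldsymbol{\varphi}},\bar{\boldsymbol{\eta}})$; I would estimate it by a three-factor Hölder inequality placing $\nabla\bar{\boldsymbol{\varphi}}\in\boldsymbol{L}^2(\Omega)$, $\bar{\boldsymbol{\eta}}\in\boldsymbol{L}^{p_{2_d}}(\Omega)$, and $\nabla\bar{\boldsymbol{u}}$ in the appropriate Lebesgue space estimated via \eqref{FractionalGagliardoNirenberg} with $\theta=1$ and $l=1$ exactly as in \eqref{CtynuT.1}; this uses the extra regularity $\bar{\boldsymbol{u}}\in\boldsymbol{H}^{3/2+\delta}(\Omega)$ and yields a bound of the form $C_{\nu_T}C_{gn}C_{p_{2_d}}M\|\bar{\boldsymbol{\varphi}}\|_{1,\Omega}\|\bar{\boldsymbol{\eta}}\|_{1,\Omega}$. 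Finally, the Nemytskii-type term $((\boldsymbol{F}_{\boldsymbol{y}}(\bar{\boldsymbol{y}}))^\top\bar{\boldsymbol{\varphi}},\bar{\boldsymbol{\eta}})$ is bounded as in \eqref{CtyFy.1}, and the right-hand sides are controlled by Cauchy--Schwarz.

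After collecting terms, the two estimates read
\begin{align*}
\alpha_a\|\bar{\boldsymbol{\varphi}}\|_{1,\Omega}^2 &\leq C_{6_d}C_{3_d}M_{\boldsymbol{u}}\|\bar{\boldsymbol{\varphi}}\|_{1,\Omega}^2 + C_{6_d}C_{3_d}M_{\boldsymbol{y}}\|\bar{\boldsymbol{\varphi}}\|_{1,\Omega}\|\bar{\boldsymbol{\eta}}\|_{1,\Omega} + \|\bar{\boldsymbol{u}}-\boldsymbol{u}_d\|_{0,\Omega}\|\bar{\boldsymbol{\varphi}}\|_{1,\Omega},\\
\hat{\alpha}_a\|\bar{\boldsymbol{\eta}}\|_{1,\Omega} &\leq \bigl(C_{\nu_T}C_{gn}C_{p_{2_d}}M + C_{F_{\boldsymbol{y}}}\bigr)\|\bar{\boldsymbol{\varphi}}\|_{1,\Omega} + \|\bar{\boldsymbol{y}}-\boldsymbol{y}_d\|_{0,\Omega},
\end{align*}
which is precisely the template of \eqref{LinearizedEq1}--\eqref{LinearizedEq2}. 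Substituting the second into the first and using the smallness assumption \eqref{LinearizedEquationConditions} produces a strictly positive coefficient in front of $\|\bar{\boldsymbol{\varphi}}\|_{1,\Omega}^2$, from which $\|\bar{\boldsymbol{\varphi}}\|_{1,\Omega}\leq M_{\boldsymbol{\varphi}}$ follows and, plugged back, yields $\|\bar{\boldsymbol{\eta}}\|_{1,\Omega}\leq M_{\boldsymbol{\eta}}$.

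The only step requiring genuine care is the handling of $(\nu_T(\bar{T})\nabla\bar{\boldsymbol{u}}:\nabla\bar{\boldsymbol{\varphi}},\bar{\boldsymbol{\eta}})$: a naive placement of $\nabla\bar{\boldsymbol{u}}\in\boldsymbol{L}^2$ would force one of $\nabla\bar{\boldsymbol{\varphi}},\bar{\boldsymbol{\eta}}$ into $\boldsymbol{L}^\infty$, which is unavailable. The fix is exactly the fractional Gagliardo--Nirenberg bound \eqref{GN} that promotes $\nabla\bar{\boldsymbol{u}}$ to $\boldsymbol{L}^{4/(1-2\delta)}(\Omega)$ in 2D and $\boldsymbol{L}^{3/(1-\delta)}(\Omega)$ in 3D, trading off against $\bar{\boldsymbol{\eta}}\in\boldsymbol{L}^{p_{2_d}}(\Omega)$ via \eqref{H1embedding}; once this is in place the rest is bookkeeping.
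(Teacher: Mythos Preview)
Your proposal is correct and follows exactly the route the paper indicates: the paper states only that the estimate ``can be proven along the lines of Lemma~\ref{EnergyEstLinearized}'', and your argument does precisely this --- test with $(\bar{\boldsymbol{\varphi}},\bar{\boldsymbol{\eta}})$, exploit coercivity and the vanishing of $c(\bar{\boldsymbol{u}},\bar{\boldsymbol{\varphi}},\bar{\boldsymbol{\varphi}})$, $c_{\boldsymbol{y}}(\bar{\boldsymbol{u}},\bar{\boldsymbol{\eta}},\bar{\boldsymbol{\eta}})$, bound the cross terms (including the delicate $\nu_T$-term via \eqref{GN}), and close with the smallness condition \eqref{LinearizedEquationConditions}. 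One small remark: the two displayed inequalities you write are not literally \eqref{LinearizedEq1}--\eqref{LinearizedEq2} because the adjoint swaps where the $c_{\boldsymbol{y}}(\cdot,\bar{\boldsymbol{y}},\cdot)$ and the $\nu_T$/$\boldsymbol{F}_{\boldsymbol{y}}$ couplings sit, but after substitution the product of the cross-coupling constants is identical, so the same condition \eqref{LinearizedEquationConditions} suffices.
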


\begin{remark}[KKT optimality system]\label{CtsOptimalitySystem}
	In summary, we have derived the \emph{continuous optimality system} made up of the state or governing equation \eqref{P:S}, the adjoint equation \eqref{adjointEq}, and the variational inequality \eqref{vi} for the unknowns {\small$(\bar{\boldsymbol{u}}, p, \bar{\boldsymbol{y}}, \bar{\boldsymbol{\varphi}}, \xi, \bar{\boldsymbol{\eta}}, \bar{\boldsymbol{U}}) \in \boldsymbol{H}_0^1(\Omega) \times L_0^2(\Omega) \times [H^1(\Omega)]^2 \times \boldsymbol{H}_0^1(\Omega) \times L_0^2(\Omega) \times [H_0^1(\Omega)]^2 \times \boldsymbol{\mathcal{U}}_{ad}$ satisfying $\bar{\boldsymbol{y}}|_{\Gamma} = \boldsymbol{y}^D$}. 	Every solution $(\bar{\boldsymbol{u}}, p, \bar{\boldsymbol{y}}, \bar{\boldsymbol{U}})$ to the optimal control problem \eqref{P:CF}-\eqref{P:GE} must, together with $(\bar{\boldsymbol{\varphi}}, \xi, \bar{\boldsymbol{\eta}})$, satisfy this optimality system.

\end{remark}

\section{Second order sufficient optimality conditions}\label{Sec:SecondOrderCondition}
The sufficient conditions we present here are inspired by the framework developed in \cite{WachsmuthSSC}, in which the authors prove the coercivity of the second derivative of the Lagrangian for a subspace of all possible directions by using strongly active constraints. Throughout this section, we assume that $\boldsymbol{F}(\cdot)$ and $\nu(\cdot)$ are thrice continuously Fr\'echet differentiable with bounded derivatives of order up to three. Now, let us introduce the Lagrangian corresponding to the cost functional \eqref{P:CF} and the reduced state equation \eqref{P:Sred},
\begin{align}\label{Lagrangian}
	&~~~~~~~~~~~~~\mathcal{L} : \boldsymbol{X} \times [H^1(\Omega)]^2 \times \boldsymbol{L}^2(\Omega) \times \boldsymbol{X} \times [H_0^1(\Omega)]^2 \longrightarrow \mathbb{R} \;\; \mbox{such that} \nonumber\\
	&\mathcal{L}(\boldsymbol{u},\boldsymbol{y},\boldsymbol{U},\boldsymbol{\varphi},\boldsymbol{\eta}) = J(\boldsymbol{u},\boldsymbol{y},\boldsymbol{U}) - \left[a(\boldsymbol{y};\boldsymbol{u},\boldsymbol{\varphi}) + c(\boldsymbol{u}, \boldsymbol{u}, \boldsymbol{\varphi}) - d(\boldsymbol{y}, \boldsymbol{\varphi}) - (\boldsymbol{U},\boldsymbol{\varphi})\right] \nonumber \\
	&~~~~~~~~~~~~~~~~~~~~~~~~~~~~~~~~~~~~~~~~~~~~~~~~~~~~~~~~~~~~~~~~~~~~~~ - \left[a_{\boldsymbol{y}}(\boldsymbol{y},\boldsymbol{\eta}) + c_{\boldsymbol{y}}(\boldsymbol{u}, \boldsymbol{y}, \boldsymbol{\eta})\right]. 
\end{align}
In the next Lemma, we discuss the Fr\'echet-differentiability of $\mathcal{L}$.
\begin{lemma}\label{LagrangianFrechetDifferentiability}
	The Lagrangian $\mathcal{L}$ is thrice Fr\'echet-differentiable with respect to $\boldsymbol{w} = (\boldsymbol{u}, \boldsymbol{y}, \boldsymbol{U})$ from $\boldsymbol{X} \cap \boldsymbol{H}^{3/2 + \delta}(\Omega) \times \left[ H^{3/2+\delta}(\Omega)\right]^2 \times \boldsymbol{L}^2(\Omega)$ to $\mathbb{R}$. The second-order derivative at $\bar{\boldsymbol{w}} = (\bar{\boldsymbol{u}}, \bar{{\boldsymbol{y}}}, \bar{\boldsymbol{U}})$ fulfils together with the adjoint state $\bar{\boldsymbol{\Lambda}} = (\bar{\boldsymbol{\varphi}}, \bar{\boldsymbol{\eta}})$
	\begin{align*}
		\boldsymbol{\mathcal{L}}_{\boldsymbol{w} \boldsymbol{w}}(\bar{\boldsymbol{w}}, \bar{\boldsymbol{\Lambda}}) \left[(\boldsymbol{\zeta}_1, \boldsymbol{\mu}_1, \boldsymbol{h}_1), (\boldsymbol{\zeta}_2, \boldsymbol{\mu}_2, \boldsymbol{h}_2)\right] &= \boldsymbol{\mathcal{L}}_{\boldsymbol{u} \boldsymbol{u}} (\bar{\boldsymbol{w}}, \bar{\boldsymbol{\Lambda}})\left[\boldsymbol{\zeta}_1, \boldsymbol{\zeta}_2\right] + \boldsymbol{\mathcal{L}}_{\boldsymbol{y} \boldsymbol{y}} (\bar{\boldsymbol{w}}, \bar{\boldsymbol{\Lambda}})\left[\boldsymbol{\mu}_1, \boldsymbol{\mu}_2\right] + \boldsymbol{\mathcal{L}}_{\boldsymbol{U} \boldsymbol{U}} (\bar{\boldsymbol{w}}, \bar{\boldsymbol{\Lambda}})\left[\boldsymbol{h}_1, \boldsymbol{h}_2\right],
	\end{align*}
	and
	\begin{align*}
		|\boldsymbol{\mathcal{L}}_{\boldsymbol{u}\boldsymbol{u}}(\bar{\boldsymbol{w}}, \bar{\boldsymbol{\Lambda}})\left[\boldsymbol{\zeta}_1, \boldsymbol{\zeta}_2\right]|	&\leq  C_{\mathcal{L}_1} \norm{\boldsymbol{\zeta}_1}_{1,\Omega} \norm{\boldsymbol{\zeta}_2}_{1,\Omega}, \\
		|\boldsymbol{\mathcal{L}}_{\boldsymbol{y}\boldsymbol{y}}(\bar{\boldsymbol{w}}, \bar{\boldsymbol{\Lambda}})\left[\boldsymbol{\mu}_1, \boldsymbol{\mu}_2\right]|	&\leq C_{\mathcal{L}_2} \norm{\boldsymbol{\mu}_1}_{1,\Omega} \norm{\boldsymbol{\mu}_2}_{1,\Omega}, \\
		|\boldsymbol{\mathcal{L}_{y y y}}(\bar{\boldsymbol{w}}, \bar{\boldsymbol{\Lambda}}) [\boldsymbol{\mu}_1, \boldsymbol{\mu}_2, \boldsymbol{\mu}_3]| &\leq C_{\mathcal{L}_3} \norm{\boldsymbol{\mu}_1}_{1,\Omega} \norm{\boldsymbol{\mu}_2}_{1,\Omega} \norm{\boldsymbol{\mu}_3}_{1,\Omega},
	\end{align*}
	for all $(\boldsymbol{\zeta}_i, \boldsymbol{\mu}_i, \boldsymbol{h}_i) \in \boldsymbol{X} \times \left[H^1(\Omega)\right]^2 \times \boldsymbol{L}^2(\Omega)$ with  positive constants $C_{\mathcal{L}_i}$ independent of $\bar{\boldsymbol{w}}, \boldsymbol{\zeta}_i, \boldsymbol{\mu}_i$.  Here the last inequality in three dimensions holds for $\delta \in [\frac{1}{4}, \frac{1}{2}),$ which can be further relaxed to include the whole range of $\delta \in (0,\frac{1}{2})$ under reasonable assumptions (see Remark \ref{relaxing_delta}). 
\end{lemma}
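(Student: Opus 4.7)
The plan is to compute every required partial derivative of $\mathcal{L}$ directly from \eqref{Lagrangian}. The only non-polynomial parts are the substitutions $\nu(\boldsymbol{y})$ inside $a(\boldsymbol{y};\boldsymbol{u},\boldsymbol{\varphi})$ and $\boldsymbol{F}(\boldsymbol{y})$ inside $d(\boldsymbol{y},\boldsymbol{\varphi})$, and by hypothesis both are thrice continuously Fr\'echet differentiable with bounded derivatives; all remaining terms are at most cubic in $(\boldsymbol{u},\boldsymbol{y},\boldsymbol{U})$. Consequently every derivative up to order three is a concrete multilinear form, and the problem reduces to verifying the continuity estimates in the stated norms. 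Because $\mathcal{L}$ is linear in $\boldsymbol{U}$ apart from the $\tfrac{\lambda}{2}\|\boldsymbol{U}\|_{0,\Omega}^2$ term, $\mathcal{L}_{\boldsymbol{U}\boldsymbol{U}}[\boldsymbol{h}_1,\boldsymbol{h}_2]=\lambda(\boldsymbol{h}_1,\boldsymbol{h}_2)_{0,\Omega}$ and all mixed partials involving $\boldsymbol{U}$ vanish, which already accounts for that block of the decomposition.

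For the $\boldsymbol{u}\boldsymbol{u}$-bound the required estimate comes straight from Lemma \ref{ContinuityProp} applied to $c(\boldsymbol{\zeta}_1,\boldsymbol{\zeta}_2,\bar{\boldsymbol{\varphi}})+c(\boldsymbol{\zeta}_2,\boldsymbol{\zeta}_1,\bar{\boldsymbol{\varphi}})$ together with the cost-functional identity and the regularity $\bar{\boldsymbol{\varphi}}\in\boldsymbol{X}\cap\boldsymbol{H}^{3/2+\delta}(\Omega)$ from Remark \ref{adjointRegularity}. For the $\boldsymbol{y}\boldsymbol{y}$-bound, the critical contribution is $(\nu_{TT}(\bar{T})\mu_1^T\mu_2^T\nabla\bar{\boldsymbol{u}},\nabla\bar{\boldsymbol{\varphi}})$. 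I factor $\|\nu_{TT}\|_{L^\infty(\Omega)}$ out and estimate by a four-factor H\"older inequality $\|\mu_1\|_{L^{p_1}}\|\mu_2\|_{L^{p_2}}\|\nabla\bar{\boldsymbol{u}}\|_{L^{q_1}}\|\nabla\bar{\boldsymbol{\varphi}}\|_{L^{q_2}}$ with $1/p_1+1/p_2+1/q_1+1/q_2=1$: choosing $p_i=q_j=4$ in 2D and $p_i=6,\,q_j=3$ in 3D, each factor is controlled via \eqref{H1embedding} for $\mu_i\in H^1$ and \eqref{FractionalSobolevEmbedding} applied to $\nabla\bar{\boldsymbol{u}},\nabla\bar{\boldsymbol{\varphi}}\in\boldsymbol{H}^{1/2+\delta}(\Omega)$. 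The buoyancy piece $(\boldsymbol{F}_{\boldsymbol{y}\boldsymbol{y}}(\bar{\boldsymbol{y}})[\boldsymbol{\mu}_1,\boldsymbol{\mu}_2],\bar{\boldsymbol{\varphi}})$ is handled by the same template with $\|\boldsymbol{F}_{\boldsymbol{y}\boldsymbol{y}}\|_{L^\infty}$ extracted and $\bar{\boldsymbol{\varphi}}\in\boldsymbol{L}^\infty$ available via $\boldsymbol{H}^{3/2+\delta}\hookrightarrow\boldsymbol{L}^\infty$.

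The main obstacle is the third-derivative estimate, whose dominant term is $(\nu_{TTT}(\bar{T})\mu_1^T\mu_2^T\mu_3^T\nabla\bar{\boldsymbol{u}},\nabla\bar{\boldsymbol{\varphi}})$. A five-factor H\"older decomposition now requires $3/p+1/q_1+1/q_2=1$ with $\mu_i\in L^p$ and $\nabla\bar{\boldsymbol{u}},\nabla\bar{\boldsymbol{\varphi}}\in L^{q_j}$. In two dimensions the choice $p=6,\,q_1=q_2=4$ works unconditionally since $\boldsymbol{H}^{1/2+\delta}(\Omega)\hookrightarrow\boldsymbol{L}^4(\Omega)$ for every $\delta>0$ in $\mathbb{R}^2$. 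In three dimensions, $\mu_i\in\boldsymbol{H}^1$ only embeds up to $\boldsymbol{L}^6$, forcing $p=6$ and hence $q_1=q_2=4$; but $\nabla\bar{\boldsymbol{u}}\in\boldsymbol{L}^4$ arises from $\boldsymbol{H}^{1/2+\delta}\hookrightarrow\boldsymbol{L}^{3/(1-\delta)}$, which demands $3/(1-\delta)\ge 4$, that is $\delta\ge 1/4$. This is exactly the restriction announced in the statement, to be relaxed later via Remark \ref{relaxing_delta}. The companion $(\boldsymbol{F}_{\boldsymbol{y}\boldsymbol{y}\boldsymbol{y}}(\bar{\boldsymbol{y}})[\boldsymbol{\mu}_1,\boldsymbol{\mu}_2,\boldsymbol{\mu}_3],\bar{\boldsymbol{\varphi}})$ is much easier: boundedness of $\boldsymbol{F}_{\boldsymbol{y}\boldsymbol{y}\boldsymbol{y}}$ together with $\bar{\boldsymbol{\varphi}}\in\boldsymbol{L}^\infty$ leaves only $\|\mu_i\|_{L^3}\le C\|\mu_i\|_{1,\Omega}$ to invoke.

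With those multilinear continuity bounds in hand, the thrice Fr\'echet differentiability itself is established by the standard Taylor-remainder argument: the polynomial dependence in $(\boldsymbol{u},\boldsymbol{U})$ is trivially smooth, and the remainders generated by Taylor-expanding $\nu$ and $\boldsymbol{F}$ about $\bar{\boldsymbol{y}}$ are controlled by the very same H\"older/Sobolev estimates, so the little-$o$ conditions defining each Fr\'echet derivative are verified term by term as in the analogous argument of Lemma \ref{FrechetDifferentiability}.
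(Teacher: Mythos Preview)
Your proposal is correct and follows essentially the same strategy as the paper: explicitly compute the partial derivatives of $\mathcal{L}$, observe that the only nontrivial contributions come from the $\nu$- and $\boldsymbol{F}$-dependent terms, and bound each resulting multilinear form via a H\"older splitting combined with Sobolev/Gagliardo--Nirenberg embeddings, with the $\delta\ge 1/4$ restriction in three dimensions arising from the five-factor estimate of $(\nu_{TTT}(\bar T)\mu_1^T\mu_2^T\mu_3^T\nabla\bar{\boldsymbol{u}},\nabla\bar{\boldsymbol{\varphi}})$. The only cosmetic differences are your particular exponent choices (e.g.\ $p_i=q_j=4$ in 2D where the paper takes $r_1=r_2=r_3=6,\,r_4=2$) and your use of $\bar{\boldsymbol{\varphi}}\in\boldsymbol{L}^\infty$ for the buoyancy pieces where the paper instead places $\bar{\boldsymbol{\varphi}}$ in $\boldsymbol{L}^2$ and the $\boldsymbol{\mu}_i$ in higher $L^p$; both variants are valid and lead to the same conclusions.
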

\begin{proof}
	The first order derivatives with respect to $\boldsymbol{u}, \boldsymbol{y}$ and $\boldsymbol{U}$ in the direction $(\boldsymbol{\zeta}, \boldsymbol{\mu}, \boldsymbol{h}) \in \boldsymbol{X} \times \left[H^1(\Omega)\right]^2 \times \boldsymbol{L}^2(\Omega)$ are,
	\begin{align*}
		%		& a(\bar{\boldsymbol{y}}; \boldsymbol{\zeta}, \boldsymbol{v}) + c(\boldsymbol{\zeta}, \bar{\boldsymbol{u}}, \boldsymbol{v}) + c(\bar{\boldsymbol{u}}, \boldsymbol{\zeta}, \boldsymbol{v}) + c_{\boldsymbol{y}}(\boldsymbol{\zeta}, \bar{\boldsymbol{y}}, \boldsymbol{v}) = (\boldsymbol{h}, \boldsymbol{v})  \;\; \forall \;\; \boldsymbol{v} \in \boldsymbol{X}, \\
		%		& a_{\boldsymbol{y}}(\boldsymbol{\mu}, \boldsymbol{s}) + c_{\boldsymbol{y}}(\bar{\boldsymbol{u}}, \boldsymbol{\mu}, \boldsymbol{s}) + ((\nu_{T} (\bar{T})) \boldsymbol{\mu} \; \nabla \bar{\boldsymbol{u}}, \nabla \boldsymbol{s}) - ((\boldsymbol{F}_{\boldsymbol{y}}(\bar{\boldsymbol{y}})) \boldsymbol{\mu}, \boldsymbol{s}) =  0 \;\; \forall \;\; \boldsymbol{s} \in \left[H_0^1(\Omega)\right]^2.\\
		\mathcal{L}_{\boldsymbol{u}}(\bar{\boldsymbol{w}}, \bar{\boldsymbol{\Lambda}})[\boldsymbol{\zeta}] &= (\boldsymbol{\zeta}, \bar{\boldsymbol{u}} - \boldsymbol{u}_d)  - a(\bar{\boldsymbol{y}}; \boldsymbol{\zeta}, \bar{\boldsymbol{\varphi}}) - c(\boldsymbol{\zeta}, \bar{\boldsymbol{u}}, \bar{\boldsymbol{\varphi}}) - c(\bar{\boldsymbol{u}}, \boldsymbol{\zeta}, \bar{\boldsymbol{\varphi}}) - c_{\boldsymbol{y}}(\boldsymbol{\zeta}, \bar{\boldsymbol{y}}, \bar{\boldsymbol{\eta}}),\\
		\mathcal{L}_{\boldsymbol{y}}(\bar{\boldsymbol{w}}, \bar{\boldsymbol{\Lambda}})[\boldsymbol{\mu}] &= (\boldsymbol{\mu}, \bar{\boldsymbol{y}} - \boldsymbol{y}_d) - a_{\boldsymbol{y}}(\boldsymbol{\mu}, \bar{\boldsymbol{\eta}}) - c_{\boldsymbol{y}}(\bar{\boldsymbol{u}}, \boldsymbol{\mu}, \bar{\boldsymbol{\eta}}) - ((\nu_{T} (\bar{T})) \mu^T \; \nabla \bar{\boldsymbol{u}}, \nabla \bar{\boldsymbol{\varphi}}) + ((\boldsymbol{F}_{\boldsymbol{y}}(\bar{\boldsymbol{y}})) \boldsymbol{\mu}, \bar{\boldsymbol{\varphi}}),\\
		\mathcal{L}_{\boldsymbol{U}}(\bar{\boldsymbol{w}}, \bar{\boldsymbol{\Lambda}})[\boldsymbol{h}] &= \lambda (\boldsymbol{h}, \bar{\boldsymbol{U}}) + (\boldsymbol{h}, \bar{\boldsymbol{\varphi}}),
	\end{align*}
	where $\boldsymbol{\mu} := (\mu^T, \mu^S)$ and $(\boldsymbol{\zeta}, \boldsymbol{\mu}, \boldsymbol{h}) \in \boldsymbol{X} \times \left[H^1(\Omega)\right]^2 \times \boldsymbol{L}^2(\Omega)$. The mappings $\bar{\boldsymbol{u}} \mapsto \boldsymbol{\mathcal{L}_{u}}$ and $\bar{\boldsymbol{U}} \mapsto \boldsymbol{\mathcal{L}_{U}}$ are affine and the linear parts are bounded due to Lemma \ref{ContinuityProp} and hence continuous. On the other hand, the mapping $\bar{\boldsymbol{y}} \mapsto \boldsymbol{\mathcal{L}_{y}}$ is nonlinear. Using  the regularity of $\bar{\boldsymbol{u}}$ and energy estimates of $\bar{{\boldsymbol{u}}}$ and $\bar{\boldsymbol{\varphi}}$ (see Theorem \ref{State.Wp.Reg} and Lemma \ref{EnergyEstAdjoint}), we have 
	\begin{align}
		|((\boldsymbol{F}_{\boldsymbol{y}}(\bar{\boldsymbol{y}})) \boldsymbol{\mu}, \boldsymbol{v})| &\leq \norm{(\boldsymbol{F}_{\boldsymbol{y}}(\bar{\boldsymbol{y}}))}_{\infty,\Omega} \norm{\boldsymbol{\mu}}_{0,\Omega} \norm{\boldsymbol{v}}_{0,\Omega} \leq C_{F_{\boldsymbol{y}}} \norm{\boldsymbol{\mu}}_{1,\Omega} \norm{\boldsymbol{v}}_{1,\Omega}, \label{CtyFy} \\
		|((\nu_{T}(\bar{T}) \mu^T \nabla \boldsymbol{\bar{u}}, \nabla \boldsymbol{v}))| 
		&\leq C_{\nu_T} C_{gn} C_{p_{2_d}} M \norm{\boldsymbol{\mu}}_{1,\Omega} \norm{\boldsymbol{v}}_{1,\Omega}.  \label{CtynuT}
	\end{align}
	Therefore, the mapping is bounded due to the bounds in Lemma \ref{ContinuityProp}, \eqref{CtyFy} and \eqref{CtynuT}. Thus the mappings are Fr\'echet-differentiable and twice Fr\'echet-differentiability of $\boldsymbol{\mathcal{L}}$ is implied. The second order derivatives with respect to $\boldsymbol{u}, \boldsymbol{y}$ and $\boldsymbol{U}$ are,
	\begin{align}
		\mathcal{L}_{\boldsymbol{u} \boldsymbol{u}}(\bar{\boldsymbol{w}}, \bar{\boldsymbol{\Lambda}})[\boldsymbol{\zeta}_1, \boldsymbol{\zeta}_2] &= (\boldsymbol{\zeta}_1, \boldsymbol{\zeta}_2) - c(\boldsymbol{\zeta}_1, \boldsymbol{\zeta}_2, \bar{\boldsymbol{\varphi}}) - c(\boldsymbol{\zeta}_2, \boldsymbol{\zeta}_1, \bar{\boldsymbol{\varphi}}), \label{Luu}\\
		\mathcal{L}_{\boldsymbol{y} \boldsymbol{y}}(\bar{\boldsymbol{w}}, \bar{\boldsymbol{\Lambda}})[\boldsymbol{\mu}_1, \boldsymbol{\mu}_2] &= (\boldsymbol{\mu}_1, \boldsymbol{\mu}_2) - (\nu_{TT}(\bar{T}) \mu^{T_1} \mu^{T_2} \nabla \bar{\boldsymbol{u}}, \nabla \bar{\boldsymbol{\varphi}}) + (\boldsymbol{F}_{\boldsymbol{y} \boldsymbol{y}}(\bar{\boldsymbol{y}}) (\boldsymbol{\mu}_{1} \otimes\boldsymbol{\mu}_{2}), \bar{\boldsymbol{\varphi}}), \label{Lyy}\\
		\mathcal{L}_{\boldsymbol{U} \boldsymbol{U}}(\bar{\boldsymbol{w}}, \bar{\boldsymbol{\Lambda}})[\boldsymbol{h}_1, \boldsymbol{h}_2] &= \lambda (\boldsymbol{h}_1, \boldsymbol{h}_2), \label{LUU}
	\end{align}
	where  $\boldsymbol{\mu}_i := (\mu^{T_{i}}, \mu^{S_i})$ and the following estimates follow:
	\begin{align}
		|\mathcal{L}_{\boldsymbol{u} \boldsymbol{u}}(\bar{\boldsymbol{w}}, \bar{\boldsymbol{\Lambda}})[\boldsymbol{\zeta}_1, \boldsymbol{\zeta}_2]| &\leq \norm{\boldsymbol{\zeta}_1}_{0,\Omega} \norm{\boldsymbol{\zeta}_2}_{0,\Omega} + 2 \; C_{6_d} C_{3_d} \norm{\boldsymbol{\zeta}_2}_{1,\Omega} \norm{\boldsymbol{\zeta}_1}_{1,\Omega} \norm{\bar{\boldsymbol{\varphi}}}_{1,\Omega}, \nonumber\\
		&\leq \max \left\{1, 2 \; C_{6_d} C_{3_d} M_{\boldsymbol{\varphi}}\right\} \norm{\boldsymbol{\zeta}_1}_{1,\Omega} \norm{\boldsymbol{\zeta}_2}_{1,\Omega}, \nonumber\\
		|(\boldsymbol{F}_{\boldsymbol{y} \boldsymbol{y}}(\bar{\boldsymbol{y}}) (\boldsymbol{\mu}_1 \otimes \boldsymbol{\mu}_2), \bar{\boldsymbol{\varphi}})| &\leq \|F_{\boldsymbol{y} \boldsymbol{y}} (\bar{\boldsymbol{y}})\|_{\infty,\Omega} \norm{\boldsymbol{\mu}_1}_{L^4(\Omega)} \norm{\boldsymbol{\mu}_2}_{L^4(\Omega)} \norm{\bar{\boldsymbol{\varphi}}}_{0,\Omega} \nonumber\\
		&\leq C_{F_{\boldsymbol{y}  \boldsymbol{y}}} C_{4_d}^2 M_{\boldsymbol{\varphi}} \norm{\boldsymbol{\mu}_1}_{1,\Omega} \norm{\boldsymbol{\mu}_2}_{1,\Omega}, \label{Cty_Fyy}\\
		|(\nu_{TT}(\bar{T}) \mu^{T_1} \mu^{T_2} \nabla \bar{\boldsymbol{u}}, \nabla \bar{\boldsymbol{\varphi}})| &\leq \|\nu_{TT}(\bar{T})\|_{\infty,\Omega} \norm{\nabla \bar{\boldsymbol{u}}}_{L^{r_1}(\Omega)} \|\mu^{T_1}\|_{L^{r_2}(\Omega)} \|\mu^{T_2}\|_{L^{r_3}(\Omega)} \norm{\nabla \bar{\boldsymbol{\varphi}}}_{L^{r_4}(\Omega)} \nonumber\\
		&\leq\begin{cases}
			C_{\nu_{TT}} C_{gn} C_{r_{2_d}} C_{r_{3_d}} \norm{\bar{\boldsymbol{u}}}_{H^{3/2 + \delta}(\Omega)} \|\mu^{T_1}\|_{1,\Omega} 
			\|\mu^{T_2}\|_{1,\Omega} \norm{\bar{\boldsymbol{\varphi}}}_{1,\Omega} \; \mbox{in 2D}, \label{Cty_nuTT} \\
			C_{\nu_{TT}} \norm{\bar{\boldsymbol{u}}}_{3/2+\delta,\Omega} \|\mu^{T_1}\|_{1,\Omega} \|\mu^{T_2}\|_{1,\Omega} \norm{\bar{\boldsymbol{\varphi}}}_{3/2+\delta,\Omega} \; \mbox{in 3D} ,
		\end{cases}
	\end{align} 		
	%	where the last inequality is tackled by choosing $r_i = 6$ in two dimensions and $r_1 = 2, r_2 = \infty, r_3 = \infty$ and applying fractional Sobolev embedding: $\mbox{if} \; l r_1>d, \; \mbox{then} \; \boldsymbol{W}^{l,r_1}(\Omega) \hookrightarrow \boldsymbol{L}^{\infty}(\Omega)$ (cf. Theorem 4.57 in \cite{FractionalSobolevEmbeddings}) with $l = \frac{3}{2} + \delta$ and $r_1 = 2$, in three dimensions.
	where the last inequality is tackled by choosing $r_1 = r_2 = r_3 = 6$  and $r_4 = 2$ in two dimensions and applying \eqref{FractionalGagliardoNirenberg} with $r_1 = r_4 = 3$ and $r_2 = r_3 = 6$ in three dimensions. Similar to the previous case, one can see that the mappings $\bar{\boldsymbol{u}} \mapsto \boldsymbol{\mathcal{L}_{u u}}, \bar{\boldsymbol{y}} \mapsto \boldsymbol{\mathcal{L}_{y y}}$ and $\bar{\boldsymbol{U}} \mapsto \boldsymbol{\mathcal{L}_{U U}}$ are twice Fr\'echet-differentiable and thrice Fr\'echet-differentiability of $\boldsymbol{\mathcal{L}}$ is implied. The Lagrangian has third order derivatives  only with respect to $\boldsymbol{y}$,
	$$\boldsymbol{\mathcal{L}_{y y y}}(\bar{\boldsymbol{w}}, \bar{\boldsymbol{\Lambda}}) [\boldsymbol{\mu}_1, \boldsymbol{\mu}_2, \boldsymbol{\mu}_3] = (\boldsymbol{F}_{\boldsymbol{y} \boldsymbol{y} \boldsymbol{y}}(\bar{\boldsymbol{y}}) (\boldsymbol{\mu}_1 \otimes\boldsymbol{\mu}_2\otimes \boldsymbol{\mu}_3), \bar{\boldsymbol{\varphi}}) - (\nu_{TTT}(\bar{T}) \mu^{T_1} \mu^{T_2} \mu^{T_3} \nabla \bar{\boldsymbol{u}}, \nabla \bar{\boldsymbol{\varphi}}),$$
	and the boundedness follows from Lemma \ref{EnergyEstAdjoint} and,
	\begin{align}
		|(\boldsymbol{F}_{\boldsymbol{y} \boldsymbol{y} \boldsymbol{y}}(\bar{\boldsymbol{y}}) (\boldsymbol{\mu}_1 \otimes\boldsymbol{\mu}_2\otimes \boldsymbol{\mu}_3), \bar{\boldsymbol{\varphi}})| &\leq \|F_{\boldsymbol{y} \boldsymbol{y} \boldsymbol{y}} (\bar{\boldsymbol{y}})\|_{\infty,\Omega}
		\norm{\boldsymbol{\mu}_3}_{L^6(\Omega)} \norm{\boldsymbol{\mu}_2}_{L^6(\Omega)} \norm{\boldsymbol{\mu}_1}_{L^6(\Omega)} \norm{\bar{\boldsymbol{\varphi}}}_{0,\Omega} \nonumber\\
		&\leq C_{F_{\boldsymbol{y} \boldsymbol{y} \boldsymbol{y}}} M_{\boldsymbol{\varphi}} \norm{\boldsymbol{\mu}_1}_{1,\Omega} \norm{\boldsymbol{\mu}_2}_{1,\Omega} \norm{\boldsymbol{\mu}_3}_{1,\Omega}, \label{ToD_eq1}\\
		|(\nu_{TTT}(\bar{T})  \mu_{T_1} \mu_{T_2} \mu_{T_3} \nabla \bar{\boldsymbol{u}}, \nabla \bar{\boldsymbol{\varphi}})| &\leq \|\nu_{TTT}(\bar{T})\|_{\infty,\Omega} \norm{\nabla \bar{\boldsymbol{u}}}_{L^{r_1}(\Omega)} \|\mu^{T_1}\|_{L^{r_2}(\Omega)} 
		\|\mu^{T_2}\|_{L^{r_3}(\Omega)}  \|\mu^{T_3}\|_{L^{r_4}(\Omega)}\norm{\nabla \bar{\boldsymbol{\varphi}}}_{L^{r_5}(\Omega)} \nonumber\\
		&\leq\begin{cases}
			C_{\nu_{TTT}} C_{gn}  \norm{\bar{\boldsymbol{u}}}_{{3/2 + \delta},\Omega} \|\mu^{T_1}\|_{1,\Omega} \|\mu^{T_2}\|_{1,\Omega} \|\mu^{T_3}\|_{1,\Omega} \norm{\bar{\boldsymbol{\varphi}}}_{1,\Omega}  \; \mbox{in 2D} ,\\
			C_{\nu_{TTT}} C_{gn} \norm{\boldsymbol{\bar{u}}}_{3/2+\delta,\Omega} \|\mu^{T_1}\|_{1,\Omega} \|\mu^{T_2}\|_{1,\Omega} 
			\|\mu^{T_3}\|_{1,\Omega} \norm{\bar{\boldsymbol{\varphi}}}_{3/2+\delta,\Omega} \; \mbox{in 3D},
			&\end{cases}
	\end{align}
	where the last inequality holds true for $r_i = 8$ for $i = 1, \dots, 4$ and $r_5 = 2$ in two dimensions. In three dimensions, it can be bounded similar to \eqref{Cty_nuTT} by choosing $r_1 = r_5 = 4$ and $r_2 = r_3 = r_ 4 = 6,$ for $\delta \in [\frac{1}{4}, \frac{1}{2}).$
\end{proof}

Let $\bar{\boldsymbol{w}}= (\bar{\boldsymbol{u}}, \bar{\boldsymbol{y}}, \bar{\boldsymbol{U}})$ be a fixed admissible optimal triplet that satisfies the first order necessary optimality conditions. We define the set of strongly active constraints 
\begin{definition}[Strongly active sets]
	For a fixed $\varepsilon > 0$ and all $i = 1,\ldots , n,$
	$$\Omega_{\varepsilon,i} := \left\{x \in \Omega : |\lambda \bar{U}_i(x) + \bar{\varphi}_i(x)| > \varepsilon\right\},$$
	where, $v_{i}(x)$ denotes the $i^{th}$ component of a vector function $\boldsymbol{v}=(v_1,\ldots,v_d)$ at $x \in \Omega$. For $\boldsymbol{U} \in \boldsymbol{L}^p(\Omega)$ and $1 \leq p < \infty$ we define the $L^p$-norm with respect to the set of positivity by
	$$\norm{\boldsymbol{U}}_{L^{p,\Omega_{\varepsilon}}} := \left(\sum_{i=1}^{n} \norm{U_i}^{p}_{L^{p}(\Omega_{\varepsilon,i})}\right)^{\frac{1}{p}}.$$
\end{definition}
Then for all $\boldsymbol{U} \in \boldsymbol{\mathcal{U}}_{ad},$ the following holds (see  \cite[Corollary 3.14]{WachsmuthSSC} for a proof):
\begin{align}\label{StronglyActiveEstimate}
	\sum_{i=1}^{n} \int_{\Omega_{\varepsilon,i}} \left(\lambda \bar{U}_i(x) + \bar{\varphi}_i(x)\right) \left(U_i(x) -\bar {U}_i(x)\right) \geq \varepsilon \norm{\boldsymbol{U} - \bar{\boldsymbol{U}}}_{L^{1,\Omega_{\varepsilon}}}.
\end{align}
We abbreviate $[\boldsymbol{v},\boldsymbol{v}] = [\boldsymbol{v}]^2$ and $[\boldsymbol{v},\boldsymbol{v},\boldsymbol{v}] = [\boldsymbol{v}]^3$. Assume that the optimal triplet $\bar{\boldsymbol{w}}$ and the associated adjoint pair $\bar{\boldsymbol{\Lambda}}$ satisfy the following coercivity assumption or the second-order sufficient optimality condition:
\begin{align}\label{SSC}\tag{SSC}
	\left\{
	\begin{aligned}
		&\exists \;\; \varepsilon > 0 \;\; \mbox{and} \;\;  \sigma > 0 \;\; \mbox{such that}\\
		&\hspace{3.5cm}\boldsymbol{\mathcal{L}}_{\boldsymbol{w} \boldsymbol{w}} (\bar{\boldsymbol{w}}, \bar{\boldsymbol{\Lambda}}) \left[\left(\boldsymbol{\zeta}, \boldsymbol{\mu}, \boldsymbol{h}\right)\right]^2 \geq \sigma \norm{\boldsymbol{h}}^2_{L^{r}(\Omega)} \\
		&\mbox{holds for all} \;\; (\boldsymbol{\zeta}, \boldsymbol{\mu}, \boldsymbol{h}) \in \boldsymbol{X} \times \left[H_0^1(\Omega)\right]^2 \times \boldsymbol{L}^2(\Omega) \; \mbox{with} \\	&\hspace{2cm}\boldsymbol{h} = \boldsymbol{U} - \bar{\boldsymbol{U}} \;\;\forall\;\; \boldsymbol{U} \in \boldsymbol{\mathcal{U}}_{ad}, \boldsymbol{h}_i = 0 \;\; \mbox{on} \;\; \Omega_{\varepsilon,i} \; \mbox{for} \;\; i = 1,\ldots , n\\
		&\mbox{and} \; (\boldsymbol{\zeta}, \boldsymbol{\mu}) \in \boldsymbol{X} \times \left[H^1_0(\Omega) \right]^2, \; \mbox{where} \; \boldsymbol{\mu} := \left(\mu^T, \mu^S \right), \; \mbox{solves the linearized equation} \\
		&a(\bar{\boldsymbol{y}}; \boldsymbol{\zeta}, \boldsymbol{v}) + c(\boldsymbol{\zeta}, \bar{\boldsymbol{u}}, \boldsymbol{v}) + c(\bar{\boldsymbol{u}}, \boldsymbol{\zeta}, \boldsymbol{v}) + ((\nu_{T} (\bar{T})) \mu^T \; \nabla \bar{\boldsymbol{u}}, \nabla \boldsymbol{v}) - ((\boldsymbol{F}_{\boldsymbol{y}}(\bar{\boldsymbol{y}})) \boldsymbol{\mu}, \boldsymbol{v})  = (\boldsymbol{h}, \boldsymbol{v}) ,\\
		&\hspace{6cm}a_{\boldsymbol{y}}(\boldsymbol{\mu}, \boldsymbol{s}) + c_{\boldsymbol{y}}(\bar{\boldsymbol{u}}, \boldsymbol{\mu}, \boldsymbol{s}) + c_{\boldsymbol{y}}(\boldsymbol{\zeta}, \bar{\boldsymbol{y}}, \boldsymbol{s}) =  0,\\
		&\hspace{10cm}  \forall \; (\boldsymbol{v}, \boldsymbol{s}) \in \boldsymbol{X} \times \left[H^1_0(\Omega)\right]^2. 
	\end{aligned}
	\right.
\end{align}
In order to work with the $\boldsymbol{L}^2$-neighbourhood of the reference control, we fix the following exponents:
$r' = 4, r = 4/3 \; \mbox{ and } \; s = 2.$ Now we prove that \eqref{SSC} and first-order necessary conditions together are sufficient for local optimality of $\bar{\boldsymbol{w}}$. In particular, we are able to show quadratic growth of the cost functional with respect to $\boldsymbol{L}^r$-norm in a $\boldsymbol{L}^s$-neighbourhood of the reference control.

\begin{theorem}[Growth rate of the cost functional]\label{SSC_thm1}
	Let $\bar{\boldsymbol{w}}=(\bar{\boldsymbol{u}}, \bar{\boldsymbol{y}}, \bar{\boldsymbol{U}})$ be the admissible triplet for our optimal control problem which satisfies the first-order necessary optimality condition with corresponding adjoint state $\bar{\boldsymbol{\Lambda}}$ given in Theorem \ref{Fopt_AdjointEqn}, such that Lemma \ref{LagrangianFrechetDifferentiability} holds. Assume further that \eqref{SSC} is satisfied at $\bar{\boldsymbol{w}}$. Then there exists $\vartheta > 0$ and $\rho > 0$ such that
	$$J(\boldsymbol{w}) \geq J(\bar{\boldsymbol{w}}) + \vartheta \norm{\boldsymbol{U} - \bar{\boldsymbol{U}}}^2_{L^{r}(\Omega)},$$
	holds for all admissible triplets $\boldsymbol{w}=({\boldsymbol{u}}, {\boldsymbol{y}}, {\boldsymbol{U}})$ with  $\norm{\boldsymbol{U} - \bar{\boldsymbol{U}}}_{L^s(\Omega)} \leq \rho$. 
\end{theorem}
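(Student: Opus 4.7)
The plan is to follow the template of \cite{WachsmuthSSC}, with the essential new ingredient being the treatment of a nonvanishing third-order Taylor remainder caused by the nonlinear dependence of $\nu$ and $\boldsymbol{F}$ on $\boldsymbol{y}$. Fix any admissible triplet $\boldsymbol{w}=(\boldsymbol{u},\boldsymbol{y},\boldsymbol{U})$ and set $\boldsymbol{h}:=\boldsymbol{U}-\bar{\boldsymbol{U}}$, $\boldsymbol{\delta\zeta}:=\boldsymbol{u}-\bar{\boldsymbol{u}}\in\boldsymbol{X}$, $\boldsymbol{\delta\mu}:=\boldsymbol{y}-\bar{\boldsymbol{y}}\in[H_0^1(\Omega)]^2$ (the latter lies in $H_0^1$ because both states share the lifting $\boldsymbol{y}^D$). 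Since both $\boldsymbol{w}$ and $\bar{\boldsymbol{w}}$ satisfy the reduced state equation, $\mathcal{L}(\boldsymbol{w},\bar{\boldsymbol{\Lambda}})-\mathcal{L}(\bar{\boldsymbol{w}},\bar{\boldsymbol{\Lambda}})=J(\boldsymbol{w})-J(\bar{\boldsymbol{w}})$. Expanding $\mathcal{L}$ to second order (valid by Lemma \ref{LagrangianFrechetDifferentiability}) and using the adjoint equation \eqref{adjointEq}, which forces $\mathcal{L}_{\boldsymbol{u}}(\bar{\boldsymbol{w}},\bar{\boldsymbol{\Lambda}})[\boldsymbol{\delta\zeta}]=0=\mathcal{L}_{\boldsymbol{y}}(\bar{\boldsymbol{w}},\bar{\boldsymbol{\Lambda}})[\boldsymbol{\delta\mu}]$, yields
\[
J(\boldsymbol{w})-J(\bar{\boldsymbol{w}}) = (\lambda\bar{\boldsymbol{U}}+\bar{\boldsymbol{\varphi}},\boldsymbol{h}) + \tfrac{1}{2}\,\mathcal{L}_{\boldsymbol{w}\boldsymbol{w}}(\bar{\boldsymbol{w}},\bar{\boldsymbol{\Lambda}})[(\boldsymbol{\delta\zeta},\boldsymbol{\delta\mu},\boldsymbol{h})]^2 + R_3,
\]
with $R_3$ the cubic Taylor remainder evaluated at some intermediate state and controlled by $\mathcal{L}_{\boldsymbol{y}\boldsymbol{y}\boldsymbol{y}}$.

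For the linear term I would split $\boldsymbol{h}=\tilde{\boldsymbol{h}}+\hat{\boldsymbol{h}}$ componentwise via $\tilde h_i=h_i\chi_{\Omega_{\varepsilon,i}}$. The projection characterization \eqref{projection_formula} together with \eqref{vi} makes $(\lambda\bar U_i+\bar\varphi_i)\hat h_i$ pointwise nonnegative on $\Omega\setminus\Omega_{\varepsilon,i}$, while \eqref{StronglyActiveEstimate} delivers $(\lambda\bar{\boldsymbol{U}}+\bar{\boldsymbol{\varphi}},\boldsymbol{h})\geq\varepsilon\|\tilde{\boldsymbol{h}}\|_{L^{1,\Omega_\varepsilon}}$. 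For the quadratic term, introduce the linearized state $(\boldsymbol{\zeta},\boldsymbol{\mu})$ associated to $\boldsymbol{h}$ via Theorem \ref{WellposednessLinearizedEquations} and write $\boldsymbol{\delta\zeta}=\boldsymbol{\zeta}+r_{\boldsymbol{\zeta}}$, $\boldsymbol{\delta\mu}=\boldsymbol{\mu}+r_{\boldsymbol{\mu}}$ in the manner of Lemma \ref{FrechetDifferentiability}. Bilinearity of $\mathcal{L}_{\boldsymbol{w}\boldsymbol{w}}$ splits the quadratic form into the principal piece $\mathcal{L}_{\boldsymbol{w}\boldsymbol{w}}[(\boldsymbol{\zeta},\boldsymbol{\mu},\boldsymbol{h})]^2$ plus remainder terms in $(r_{\boldsymbol{\zeta}},r_{\boldsymbol{\mu}},0)$; these remainder terms are $O(\|\boldsymbol{h}\|_{L^r}^3)$ by Lemma \ref{LagrangianFrechetDifferentiability} combined with $\|r_{\boldsymbol{\zeta}}\|_{1,\Omega}+\|r_{\boldsymbol{\mu}}\|_{1,\Omega}=O(\|\boldsymbol{h}\|_{L^r}^2)$ coming from \eqref{rury}.

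By further linearity, decompose $(\boldsymbol{\zeta},\boldsymbol{\mu})=(\tilde{\boldsymbol{\zeta}},\tilde{\boldsymbol{\mu}})+(\hat{\boldsymbol{\zeta}},\hat{\boldsymbol{\mu}})$ according to the sources $\tilde{\boldsymbol{h}},\hat{\boldsymbol{h}}$. The triple $(\hat{\boldsymbol{\zeta}},\hat{\boldsymbol{\mu}},\hat{\boldsymbol{h}})$ satisfies the hypotheses of \eqref{SSC} and therefore contributes $\mathcal{L}_{\boldsymbol{w}\boldsymbol{w}}[(\hat{\boldsymbol{\zeta}},\hat{\boldsymbol{\mu}},\hat{\boldsymbol{h}})]^2\geq\sigma\|\hat{\boldsymbol{h}}\|_{L^r}^2$. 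The remaining cross and pure-$\tilde{}$ quadratic terms can be bounded by $C\|\tilde{\boldsymbol{h}}\|_{L^r}\big(\|\tilde{\boldsymbol{h}}\|_{L^r}+\|\hat{\boldsymbol{h}}\|_{L^r}\big)$ via Lemma \ref{EnergyEstLinearized} and Lemma \ref{LagrangianFrechetDifferentiability}. The crucial step is the interpolation \eqref{ControlInterpolation}, which under $\|\boldsymbol{h}\|_{L^s}\leq\rho$ gives $\|\tilde{\boldsymbol{h}}\|_{L^r}^2\leq\rho\,\|\tilde{\boldsymbol{h}}\|_{L^1}\leq\rho\,\|\tilde{\boldsymbol{h}}\|_{L^{1,\Omega_\varepsilon}}$; a Young's inequality on the cross term then transfers these contributions to the first-order gain $\varepsilon\|\tilde{\boldsymbol{h}}\|_{L^{1,\Omega_\varepsilon}}$ for $\rho$ small.

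The principal obstacle, absent from \cite{WachsmuthSSC}, is the control of $R_3$. By Lemma \ref{LagrangianFrechetDifferentiability}, $|R_3|\leq C\|\boldsymbol{\delta\mu}\|_{1,\Omega}^3$, and \eqref{stabilityH1} gives $|R_3|\leq C\|\boldsymbol{h}\|_{L^r}^3$. Using the embedding $L^s(\Omega)\hookrightarrow L^r(\Omega)$ (bounded domain, $s>r$) one has $\|\boldsymbol{h}\|_{L^r}\leq|\Omega|^{(s-r)/(rs)}\|\boldsymbol{h}\|_{L^s}\leq C\rho$, so that $|R_3|\leq C\rho\|\boldsymbol{h}\|_{L^r}^2$. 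Choosing $\rho$ sufficiently small absorbs both $R_3$ and the $O(\|\boldsymbol{h}\|_{L^r}^3)$ linearization errors into $\tfrac{\sigma}{4}\|\hat{\boldsymbol{h}}\|_{L^r}^2$ plus the activated gain. Finally, the bound $\|\boldsymbol{h}\|_{L^r}^2\leq 2\|\hat{\boldsymbol{h}}\|_{L^r}^2+2\|\tilde{\boldsymbol{h}}\|_{L^r}^2$ together with $\|\tilde{\boldsymbol{h}}\|_{L^r}^2\leq\rho\|\tilde{\boldsymbol{h}}\|_{L^{1,\Omega_\varepsilon}}$ and the strongly active estimate delivers the quadratic growth $J(\boldsymbol{w})-J(\bar{\boldsymbol{w}})\geq\vartheta\|\boldsymbol{h}\|_{L^r}^2$ with $\vartheta=\vartheta(\sigma,\varepsilon,\rho)>0$. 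The main delicate point is to consistently thread the exponents $(r,r',s)=(4/3,4,2)$ through every absorption, in particular ensuring that the cubic remainder $R_3$ really decays faster than $\|\boldsymbol{h}\|_{L^r}^2$ within the $\boldsymbol{L}^s$-neighbourhood of $\bar{\boldsymbol{U}}$.
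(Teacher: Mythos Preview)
Your proposal is correct and follows essentially the same route as the paper: Taylor expansion of the Lagrangian, use of the adjoint equation to kill the first-order state terms, the strongly active estimate \eqref{StronglyActiveEstimate} for the first-order control term, an active/inactive splitting of the control increment so that \eqref{SSC} can be applied on the inactive part, and finally absorption of all remainder pieces via the interpolation \eqref{ControlInterpolation} for small $\rho$.

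The only organisational difference is that the paper performs a single splitting of the true state increment, namely $(\boldsymbol{u}-\bar{\boldsymbol{u}},\boldsymbol{y}-\bar{\boldsymbol{y}})=(\boldsymbol{u}_h,\boldsymbol{y}_h)+(\boldsymbol{u}_r,\boldsymbol{y}_r)$, where $(\boldsymbol{u}_h,\boldsymbol{y}_h)$ solves the linearized problem with source $\tilde{\boldsymbol{U}}-\bar{\boldsymbol{U}}$ (your $\hat{\boldsymbol h}$) and $(\boldsymbol{u}_r,\boldsymbol{y}_r)$ simultaneously carries \emph{both} the active-set control part $\boldsymbol{U}-\tilde{\boldsymbol{U}}$ and the nonlinear remainder $R_{\boldsymbol{u}},R_{\boldsymbol{y}}$. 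You instead do two successive splits: first $(\boldsymbol{\delta\zeta},\boldsymbol{\delta\mu})=(\boldsymbol{\zeta},\boldsymbol{\mu})+(r_{\boldsymbol{\zeta}},r_{\boldsymbol{\mu}})$ with the full control source, and then $(\boldsymbol{\zeta},\boldsymbol{\mu})=(\tilde{\boldsymbol{\zeta}},\tilde{\boldsymbol{\mu}})+(\hat{\boldsymbol{\zeta}},\hat{\boldsymbol{\mu}})$ by linearity. Both organisations lead to the same collection of error terms; your way makes the role of the Fr\'echet remainder from Lemma~\ref{FrechetDifferentiability} more transparent, while the paper's way keeps the bookkeeping of powers of $\tilde{\boldsymbol{z}}$ and $\bar{\boldsymbol{z}}$ somewhat more explicit. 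Your treatment of the cubic remainder $R_3$ (direct bound $|R_3|\leq C\|\boldsymbol{\delta\mu}\|_{1,\Omega}^3\leq C\|\boldsymbol{h}\|_{L^r}^3\leq C\rho\|\boldsymbol{h}\|_{L^r}^2$) is in fact cleaner than the paper's, which expands $R_{\mathcal L}$ via $[\boldsymbol{y}_h+\boldsymbol{y}_r]^3$ into several mixed terms before arriving at the same absorption.
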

\begin{proof}
	Suppose $\bar{\boldsymbol{w}}=(\bar{\boldsymbol{u}}, \bar{\boldsymbol{y}}, \bar{\boldsymbol{U}})$ satisfies the assumptions of the theorem. Let $\boldsymbol{w}=({\boldsymbol{u}}, {\boldsymbol{y}}, {\boldsymbol{U}})$ be another admissible triplet. Then we have
	\begin{align}\label{SSC_thm1_eq1}
		J(\bar{\boldsymbol{w}}) = \boldsymbol{\mathcal{L}}(\bar{\boldsymbol{w}}, \bar{\boldsymbol{\Lambda}}) \;\; \mbox{and} \;\; J(\boldsymbol{w}) = \boldsymbol{\mathcal{L}}(\boldsymbol{w}, \bar{\boldsymbol{\Lambda}}).
	\end{align}
	Taylor-expansion of the Lagrangian around $\bar{\boldsymbol{w}}$ yields
	\begin{align}
		\boldsymbol{\mathcal{L}}(\boldsymbol{w}, \bar{\boldsymbol{\Lambda}}) &= \boldsymbol{\mathcal{L}}(\bar{\boldsymbol{w}}, \bar{\boldsymbol{\Lambda}}) + \boldsymbol{\mathcal{L}}_{\boldsymbol{u}}(\bar{\boldsymbol{w}}, \bar{\boldsymbol{\Lambda}})(\boldsymbol{u} - \bar{\boldsymbol{u}}) +  \boldsymbol{\mathcal{L}}_{\boldsymbol{y}}(\bar{\boldsymbol{w}}, \bar{\boldsymbol{\Lambda}})(\boldsymbol{y}-\bar{\boldsymbol{y}})+ \boldsymbol{\mathcal{L}}_{\boldsymbol{U}}(\bar{\boldsymbol{w}}, \bar{\boldsymbol{\Lambda}})(\boldsymbol{U} - \bar{\boldsymbol{U}})  \label{TaylorExpansion}\\
		&~~~~~~+\frac{1}{2}\boldsymbol{\mathcal{L}}_{\boldsymbol{w} \boldsymbol{w}}(\bar{\boldsymbol{w}}, \bar{\boldsymbol{\Lambda}}) \left[\boldsymbol{w} - \bar{\boldsymbol{w}}\right]^2 + R_{\boldsymbol{\mathcal{L}}}.  \nonumber
	\end{align} 
	Due the nature of nonlinearities that present themselves in the diffusion coefficient and the forcing terms, we need to estimate the following higher order remainder term $R_{\boldsymbol{\mathcal{L}}}$ (see \cite[Theorem 7.9-1]{ciarlet2013linear}), 
	$$R_{\boldsymbol{\mathcal{L}}} := \frac{1}{3} \int_{0}^1  (1 - \varTheta) \boldsymbol{\mathcal{L}}_{\boldsymbol{w} \boldsymbol{w} \boldsymbol{w}} (\bar{\boldsymbol{w}} + \varTheta (\boldsymbol{w} - \bar{\boldsymbol{w}}),\bar{\boldsymbol{\Lambda}}) \left[\boldsymbol{w} - \bar{\boldsymbol{w}}\right]^3 \ d\varTheta.$$
	Furthermore, we can verify that the first-order necessary conditions are equivalently expressed by
	\begin{align}\label{Fonoc_}
		\begin{aligned}
			\boldsymbol{\mathcal{L}_u}(\bar{\boldsymbol{w}}, \bar{\boldsymbol{\Lambda}}) \boldsymbol{\zeta} &= 0 \;\; \forall \;\; \boldsymbol{\zeta} \in \boldsymbol{X}, \;\;
			\boldsymbol{\mathcal{L}_y}(\bar{\boldsymbol{w}}, \bar{\boldsymbol{\Lambda}}) \boldsymbol{\mu} = 0 \;\; \forall \;\; \boldsymbol{\mu} \in \left[H_0^1(\Omega)\right]^2, \\
			&\boldsymbol{\mathcal{L}_U}(\bar{\boldsymbol{w}}, \bar{\boldsymbol{\Lambda}}) (\boldsymbol{U} - \bar{\boldsymbol{U}}) \geq 0 \;\; \forall \;\; \boldsymbol{U} \in \boldsymbol{\mathcal{U}}_{ad}, 
		\end{aligned}
	\end{align}
	and are satisfied at $\bar{\boldsymbol{w}}$ with the adjoint state $\bar{\boldsymbol{\Lambda}}$. Using the above representation, the second and third terms vanish in \eqref{TaylorExpansion} and the fourth term is non-negative. Infact, using \eqref{StronglyActiveEstimate} we have the following estimate for the fourth term in \eqref{TaylorExpansion} on the subspace $\Omega_{\varepsilon,i}$,
	\begin{align}\label{SSC_thm1_eq18}
		\boldsymbol{\mathcal{L}}_{\boldsymbol{U}}(\bar{\boldsymbol{w}}, \bar{\boldsymbol{\Lambda}})(\boldsymbol{U} - \bar{\boldsymbol{U}}) \geq \varepsilon \norm{\boldsymbol{U} - \bar{\boldsymbol{U}}}_{L^{1,\Omega_{\varepsilon}}}.
	\end{align}
	Next, we investigate the second derivative of $\boldsymbol{\mathcal{L}}$. Invoking \eqref{SSC} on the subspace $\Omega_{\varepsilon,i}$, let us construct a new admissible control $\tilde{\boldsymbol{U}} \in \boldsymbol{L}^s(\Omega)$ for $i = 1, ... , n$ by
	\begin{align*}
		\tilde{U}_i(x) = 
		\begin{cases}
			\bar{U}_i(x) \;\;\; \mbox{on} \;\;\; \Omega_{\varepsilon,i} \\
			U_i(x) \;\;\; \mbox{on} \;\;\; \Omega \backslash \Omega_{\varepsilon,i}.
		\end{cases}
	\end{align*}
	%	\begin{align*}
		%		\tilde{U}_i(x) = \bar{U}_i(x) \;\;\; \mbox{on} \;\;\; \Omega_{\varepsilon,i}, \; \mbox{and} \;\;\tilde{U}_i(x) =  U_i(x) \;\;\; \mbox{on} \;\;\; \Omega \backslash \Omega_{\varepsilon,i}.
		%	\end{align*}
	Then $\boldsymbol{U} - \bar{\boldsymbol{U}} = (\boldsymbol{U} - \tilde{\boldsymbol{U}}) + (\tilde{\boldsymbol{U}} - \bar{\boldsymbol{U}})$, 
	such that $\boldsymbol{h} := \tilde{\boldsymbol{U}} - \bar{\boldsymbol{U}} = 0$ on $\Omega_{\varepsilon,i}$ which is the assumption of \eqref{SSC}. The difference $(\boldsymbol{\zeta}, \boldsymbol{\mu}) = (\boldsymbol{u} - \bar{\boldsymbol{u}}, \boldsymbol{y} - \bar{\boldsymbol{y}})$ solves for all $(\boldsymbol{v}, \boldsymbol{s}) \in \boldsymbol{X} \times \left[H^1_0(\Omega)\right]^2$ the following equations,
	\begin{align*}
		\left\{
		\begin{aligned}
			& a(\bar{\boldsymbol{y}}; \boldsymbol{\zeta}, \boldsymbol{v}) + c(\boldsymbol{\zeta}, \bar{\boldsymbol{u}}, \boldsymbol{v}) + c(\bar{\boldsymbol{u}}, \boldsymbol{\zeta}, \boldsymbol{v}) 
			+ ((\nu_{T} (\bar{T})) \mu^T \; \nabla \bar{\boldsymbol{u}}, \nabla \boldsymbol{v}) - ((\boldsymbol{F}_{\boldsymbol{y}}(\bar{\boldsymbol{y}})) \boldsymbol{\mu}, \boldsymbol{v}) = (\boldsymbol{U} - \bar{\boldsymbol{U}}, \boldsymbol{v})_{r,r*} - \langle R_{\boldsymbol{u}}, \boldsymbol{v} \rangle, \nonumber\\
			&a_{\boldsymbol{y}}(\boldsymbol{\mu}, \boldsymbol{s}) + c_{\boldsymbol{y}}(\bar{\boldsymbol{u}}, \boldsymbol{\mu}, \boldsymbol{s}) + c_{\boldsymbol{y}}(\boldsymbol{\zeta}, \bar{\boldsymbol{y}}, \boldsymbol{s}) = \langle R_{\boldsymbol{y}}, \boldsymbol{s} \rangle,
		\end{aligned}
		\right.
	\end{align*}
	where the respective reminder terms $\langle R_{\boldsymbol{u}}, \boldsymbol{v} \rangle$ and $\langle R_{\boldsymbol{y}}, \boldsymbol{s} \rangle$ consist of quadratic and mixed higher-order remainder terms due to the inherent nonlinear nature of the coupled problem and are defined for $\Theta \in (0,1),$ respectively as follows:
	\begin{align*}
		&\langle R_{\boldsymbol{u}}, \boldsymbol{v} \rangle := c(\boldsymbol{u} - \bar{\boldsymbol{u}}, \boldsymbol{u} - \bar{\boldsymbol{u}}, \boldsymbol{v}) + \frac{1}{2!} \int_{0}^{1} (1 - \Theta)(F_{\boldsymbol{y}\boldsymbol{y}} (\boldsymbol{y} + \Theta (\boldsymbol{y} - \bar{\boldsymbol{y}})) (\boldsymbol{y} - \bar{\boldsymbol{y}})^2, \boldsymbol{v}) \;d \Theta \\
		&~~~~~~~~~- \frac{1}{2!}(\nu_{TT}(T + \Theta (T - \bar{T})) (T - \bar{T})^2 \nabla \bar{\boldsymbol{u}}, \nabla \boldsymbol{v}) - (\nu_T(T + \Theta (T - \bar{T})) (T - \bar{T}) \nabla (\boldsymbol{u} - \bar{\boldsymbol{u}}), \nabla \boldsymbol{v}), \\
		&\langle R_{\boldsymbol{y}}, \boldsymbol{s} \rangle := c_{\boldsymbol{y}}(\boldsymbol{u} - \bar{\boldsymbol{u}}, \boldsymbol{y} - \bar{\boldsymbol{y}}, \boldsymbol{s}).
	\end{align*} 
	In order to apply \eqref{SSC}, we split $\boldsymbol{\zeta}=\boldsymbol{u} - \bar{\boldsymbol{u}}$ and $\boldsymbol{\mu}= \boldsymbol{y} - \bar{\boldsymbol{y}}$ as follows:
	$\boldsymbol{\zeta} = \boldsymbol{u}_{h} + \boldsymbol{u}_{r} \;\; \mbox{and} \;\; \boldsymbol{\mu} = \boldsymbol{y}_{h} + \boldsymbol{y}_{r}.$
	Here, $(\boldsymbol{u}_h,\boldsymbol{y}_h)$ with $\boldsymbol{y}_h := (T_h, S_h)$ and $(\boldsymbol{u}_r,\boldsymbol{y}_r)$ with $\boldsymbol{y}_r := (T_r, S_r)$ for all $(\boldsymbol{v}, \boldsymbol{s}) \in \boldsymbol{X} \times \left[H_0^1(\Omega)\right]^2$  solve,
	\begin{align}\label{SSC_thm1_eq2}
		\left\{
		\begin{aligned}
			& a(\bar{\boldsymbol{y}}; \boldsymbol{u}_h, \boldsymbol{v}) + c(\boldsymbol{u}_h, \bar{\boldsymbol{u}}, \boldsymbol{v}) + c(\bar{\boldsymbol{u}}, \boldsymbol{u}_h, \boldsymbol{v}) + ((\nu_{T} (\bar{T})) T_{h} \; \nabla \bar{\boldsymbol{u}}, \nabla \boldsymbol{v}) - ((\boldsymbol{F}_{\boldsymbol{y}}(\bar{\boldsymbol{y}})) \boldsymbol{y}_h, \boldsymbol{v}) = (\boldsymbol{h}, \boldsymbol{v})_{r,r*}, \\
			&a_{\boldsymbol{y}}(\boldsymbol{y}_h, \boldsymbol{s}) + c_{\boldsymbol{y}}(\bar{\boldsymbol{u}}, \boldsymbol{y}_h, \boldsymbol{s}) + c_{\boldsymbol{y}}(\boldsymbol{u}_h, \bar{\boldsymbol{y}}, \boldsymbol{s}) = 0,
		\end{aligned}
		\right.
	\end{align}
	and
	\begin{align}\label{SSC_thm1_eq3}
		\left\{
		\begin{aligned}
			&a(\bar{\boldsymbol{y}}; \boldsymbol{u}_r, \boldsymbol{v}) + c(\boldsymbol{u}_r, \bar{\boldsymbol{u}}, \boldsymbol{v}) + c(\bar{\boldsymbol{u}}, \boldsymbol{u}_r, \boldsymbol{v}) \\
			&\hspace{2.5cm}+ ((\nu_{T} (\bar{T})) T_{r} \; \nabla \bar{\boldsymbol{u}}, \nabla \boldsymbol{v}) - ((\boldsymbol{F}_{\boldsymbol{y}}(\bar{\boldsymbol{y}})) \boldsymbol{y}_r, \boldsymbol{v}) = (\boldsymbol{U} - \tilde{\boldsymbol{U}}, \boldsymbol{v})_{r,r*} - \langle R_{\boldsymbol{u}}, \boldsymbol{v} \rangle, \\
			&a_{\boldsymbol{y}}(\boldsymbol{y}_r, \boldsymbol{s}) + c_{\boldsymbol{y}}(\bar{\boldsymbol{u}}, \boldsymbol{y}_r, \boldsymbol{s}) + c_{\boldsymbol{y}}(\boldsymbol{u}_r, \bar{\boldsymbol{y}}, \boldsymbol{s}) = \langle R_{\boldsymbol{y}}, \boldsymbol{s} \rangle,
		\end{aligned}
		\right.
	\end{align}	
	respectively. Equations \eqref{SSC_thm1_eq2} are linear and the auxiliary triplet $(\boldsymbol{u}_h, \boldsymbol{y}_h, \boldsymbol{h})$ belongs to the subspace  where \eqref{SSC} applies. Using Lemma \ref{EnergyEstLinearized} by taking $\langle \boldsymbol{\hat{f}}, \boldsymbol{v} \rangle = (\boldsymbol{h}, \boldsymbol{v})_{r,r'}$ and $\langle \boldsymbol{\tilde{f}}, \boldsymbol{s} \rangle = 0$, we obtain the following estimates for the auxiliary states:
	\begin{align}\label{SSC_thm1_eq4}
		\begin{aligned}
			&\norm{\boldsymbol{u}_h}_{1,\Omega} + \norm{\boldsymbol{y}_h}_{1,\Omega} \leq C \norm{\boldsymbol{h}}_{L^{r}(\Omega)} \leq C \left(\|\tilde{\boldsymbol{U}} - \boldsymbol{U}\|_{L^r(\Omega)} + \|\boldsymbol{U} - \bar{\boldsymbol{U}}\|_{L^r(\Omega)}\right).
		\end{aligned}
	\end{align}
	To bound the remainder terms,, we follow the same steps as in the proof of Lemma \ref{EnergyEstLinearized} and apply \eqref{stabilityH1}, to get
	\begin{align*}
		|c(\boldsymbol{u} - \bar{\boldsymbol{u}}, \boldsymbol{u} - \bar{\boldsymbol{u}}, \boldsymbol{v})| &\leq C_{6_d} C_{3_d} \norm{\boldsymbol{u} - \bar{\boldsymbol{u}}}^2_{1,\Omega} \norm{\boldsymbol{u}_r}_{1,\Omega} \leq C \|\boldsymbol{U} - \bar{\boldsymbol{U}}\|^2_{L^r(\Omega)} \norm{\boldsymbol{v}}_{1,\Omega}, \\
		|(\nu_{TT} \left(\bar{T} + \Theta (T - \bar{T})\right) (T - \bar{T})^2 \nabla \bar{\boldsymbol{u}}, \nabla \boldsymbol{v})| &\leq C_{\nu_{TT}} C_{gn} C_{r_{2_d}} C_{r_{3_d}} M \|\boldsymbol{y} - \bar{\boldsymbol{y}}\|^2_{3/2+\delta,\Omega} \norm{\boldsymbol{v}}_{1,\Omega} \\
		&\leq C \|\boldsymbol{U} - \bar{\boldsymbol{U}}\|^2_{L^r(\Omega)} \norm{\boldsymbol{v}}_{1,\Omega}, \\
		|(F_{\boldsymbol{y} \boldsymbol{y}} \left(\bar{\boldsymbol{y}} + \Theta (\boldsymbol{y} - \bar{\boldsymbol{y}})\right) \left(\boldsymbol{y} - \bar{\boldsymbol{y}})^2, \boldsymbol{v}\right)| &\leq C_{F_{\boldsymbol{y} \boldsymbol{y}}} C_{4_d}^2 \norm{\boldsymbol{y} - \bar{\boldsymbol{y}}}^2_{1,\Omega} \norm{\boldsymbol{v}}_{1,\Omega} \leq C \|\boldsymbol{U} - \bar{\boldsymbol{U}}\|^2_{L^r(\Omega)} \norm{\boldsymbol{v}}_{1,\Omega},\\
		|(\nu_T(T + \Theta (T - \bar{T})) (T - \bar{T}) \nabla (\boldsymbol{u} - \bar{\boldsymbol{u}}), \nabla \boldsymbol{v})| &\leq C_{\nu_T} C_{6_d} C_{gn}  \norm{\boldsymbol{y} - \bar{\boldsymbol{y}}}_{1,\Omega} \norm{\boldsymbol{u} - \bar{\boldsymbol{u}}}_{3/2+\delta,\Omega} \norm{\boldsymbol{v}}_{1,\Omega}\\
		&\leq C \|\boldsymbol{U} - \bar{\boldsymbol{U}}\|^{2}_{L^r(\Omega)} \norm{\boldsymbol{v}}_{1,\Omega}.
	\end{align*}
	Thus,
	\begin{align}\label{SSC_thm1_eq6}
		|\langle R_{\boldsymbol{u}}, \boldsymbol{v} \rangle|  \leq C \|\boldsymbol{U} - \bar{\boldsymbol{U}}\|^2_{L^r(\Omega)}  \norm{\boldsymbol{v}}_{1,\Omega} .
	\end{align}
	Similarly, we can get the following estimates for $R_{\boldsymbol{y}}$:
	\begin{align}\label{SSC_thm1_eqRy}
		|\langle R_{\boldsymbol{y}}, \boldsymbol{s} \rangle| \leq C_{6_d} C_{3_d} \norm{\boldsymbol{u} - \bar{\boldsymbol{u}}}_{1,\Omega} \norm{\boldsymbol{y} - \bar{\boldsymbol{y}}}_{1,\Omega} \norm{\boldsymbol{s}}_{1,\Omega} \leq C \|\boldsymbol{U} - \bar{\boldsymbol{U}}\|^2_{L^r(\Omega)} \norm{\boldsymbol{s}}_{1,\Omega}.
	\end{align}
	Following the steps analogous to the proof of  Lemma \ref{EnergyEstLinearized}, we can get the following estimates for the auxiliary states solving \eqref{SSC_thm1_eq3}:
	\begin{align}\label{SSC_thm1_eq7}
		\norm{\boldsymbol{u}_r}_{1,\Omega} + \norm{\boldsymbol{y}_r}_{1,\Omega} &\leq C\big(\|\tilde{\boldsymbol{U}} - \boldsymbol{U}\|_{L^r(\Omega)} +\norm{R_{\boldsymbol{u}}}_{\boldsymbol{X}^*}+\norm{R_{\boldsymbol{y}}}_{-1,\Omega}\big)\nonumber\\&\leq C \big(\|\tilde{\boldsymbol{U}} - \boldsymbol{U}\|_{L^r(\Omega)} + \|\boldsymbol{U} - \bar{\boldsymbol{U}}\|^2_{L^r(\Omega)} \big).
	\end{align}
	Now we investigate the second order derivative of the Lagrangian.  Denoting by $\boldsymbol{w}^h = (\boldsymbol{u}_h, \boldsymbol{y}_h, \boldsymbol{h}),$ we can easily derive the following identities:
	\begin{align}
		\boldsymbol{\mathcal{L}}_{\boldsymbol{w} \boldsymbol{w}}(\bar{\boldsymbol{w}}, \bar{\boldsymbol{\Lambda}}) \big[\boldsymbol{w} - \bar{\boldsymbol{w}}\big]^2 &= \boldsymbol{\mathcal{L}}_{\boldsymbol{u} \boldsymbol{u}}(\bar{\boldsymbol{w}}, \bar{\boldsymbol{\Lambda}}) \big[\boldsymbol{u}_h + \boldsymbol{u}_r\big]^2 + \boldsymbol{\mathcal{L}}_{\boldsymbol{y} \boldsymbol{y}}(\bar{\boldsymbol{w}}, \bar{\boldsymbol{\Lambda}}) \big[\boldsymbol{y}_h + \boldsymbol{y}_r\big]^2 +
		\boldsymbol{\mathcal{L}}_{\boldsymbol{U} \boldsymbol{U}}(\bar{\boldsymbol{w}}, \bar{\boldsymbol{\Lambda}}) \big[\boldsymbol{U} - \tilde{\boldsymbol{U}} + \boldsymbol{h}\big]^2\nonumber\\
		&= \boldsymbol{\mathcal{L}}_{\boldsymbol{w} \boldsymbol{w}}(\bar{\boldsymbol{w}}, \bar{\boldsymbol{\Lambda}}) \left[\boldsymbol{w}^h\right]^2 + \boldsymbol{\mathcal{L}}_{\boldsymbol{u} \boldsymbol{u}}(\bar{\boldsymbol{w}}, \bar{\boldsymbol{\Lambda}}) \left[\boldsymbol{u}_r\right]^2 + 
		2 \boldsymbol{\mathcal{L}}_{\boldsymbol{u} \boldsymbol{u}}(\bar{\boldsymbol{w}}, \bar{\boldsymbol{\Lambda}}) \big[\boldsymbol{u}_h, \boldsymbol{u}_r\big] \nonumber\\ 
		&~~~+ \boldsymbol{\mathcal{L}}_{\boldsymbol{y} \boldsymbol{y}}(\bar{\boldsymbol{w}}, \bar{\boldsymbol{\Lambda}}) \left[\boldsymbol{y}_r\right]^2 + 2 \; \boldsymbol{\mathcal{L}}_{\boldsymbol{y} \boldsymbol{y}}(\bar{\boldsymbol{w}}, \bar{\boldsymbol{\Lambda}}) \left[\boldsymbol{y}_h, \boldsymbol{y}_r\right] + \boldsymbol{\mathcal{L}}_{\boldsymbol{U} \boldsymbol{U}}(\bar{\boldsymbol{w}}, \bar{\boldsymbol{\Lambda}}) \big[\boldsymbol{U} - \tilde{\boldsymbol{U}}\big]^2 \nonumber\\
		&~~~+ 2 \boldsymbol{\mathcal{L}}_{\boldsymbol{U} \boldsymbol{U}}(\bar{\boldsymbol{w}}, \bar{\boldsymbol{\Lambda}}) \big[\boldsymbol{U} - \tilde{\boldsymbol{U}}, \boldsymbol{h}\big]. \label{SSC_thm1_eq9}
	\end{align}
	The first term is tackled by using \eqref{SSC}. The second order derivatives with respect to control satisfy
	$$\boldsymbol{\mathcal{L}}_{\boldsymbol{U} \boldsymbol{U}}(\bar{\boldsymbol{w}}, \bar{\boldsymbol{\Lambda}}) \big[\boldsymbol{U} - \tilde{\boldsymbol{U}}\big]^2 + 2 \boldsymbol{\mathcal{L}}_{\boldsymbol{U} \boldsymbol{U}}(\bar{\boldsymbol{w}}, \bar{\boldsymbol{\Lambda}}) \big[\boldsymbol{U} - \tilde{\boldsymbol{U}}, \boldsymbol{h}\big] = \lambda \|\boldsymbol{U} - \tilde{\boldsymbol{U}}\|^2_{0,\Omega} + 2 \lambda (\boldsymbol{U} - \tilde{\boldsymbol{U}}, \boldsymbol{h}).$$
	Using the definition of $\tilde{\boldsymbol{U}}$ and $\boldsymbol{h}$, we can deduce that they are orthogonal to each other, so their inner product is zero. Therefore, the following relation holds:
	\begin{align}\label{SSC_thm1_eq10}
		\boldsymbol{\mathcal{L}}_{\boldsymbol{U} \boldsymbol{U}}(\bar{\boldsymbol{w}}, \bar{\boldsymbol{\Lambda}}) \big[\boldsymbol{U} - \tilde{\boldsymbol{U}}\big]^2 + 2 \boldsymbol{\mathcal{L}}_{\boldsymbol{U} \boldsymbol{U}}(\bar{\boldsymbol{w}}, \bar{\boldsymbol{\Lambda}}) \big[\boldsymbol{U} - \tilde{\boldsymbol{U}}, \boldsymbol{h}\big] = \lambda \|\boldsymbol{U} - \bar{\boldsymbol{U}}\|^2_{0,\Omega_{\varepsilon}}\geq 0.
	\end{align}
	For our convenience, let use the following notations:
	$\tilde{\boldsymbol{z}} := \|\boldsymbol{U} - \tilde{\boldsymbol{U}}\|_{L^r(\Omega)} \;\; \mbox{and} \;\; \bar{\boldsymbol{z}} := \|\boldsymbol{U} - \bar{\boldsymbol{U}}\|_{L^r(\Omega)}.$
	Then the second order derivatives with respect to $\boldsymbol{y}$ in \eqref{SSC_thm1_eq9} are treated using Lemma \ref{LagrangianFrechetDifferentiability} and estimates \eqref{SSC_thm1_eq4}, \eqref{SSC_thm1_eq7}, as follows:
	\begin{align}
		|\boldsymbol{\mathcal{L}}_{\boldsymbol{y} \boldsymbol{y}}(\bar{\boldsymbol{w}}, \bar{\boldsymbol{\Lambda}}) \left[\boldsymbol{y}_r\right]^2 + 2 \boldsymbol{\mathcal{L}}_{\boldsymbol{y} \boldsymbol{y}}(\bar{\boldsymbol{w}}, \bar{\boldsymbol{\Lambda}}) \left[\boldsymbol{y}_h, \boldsymbol{y}_r\right]| &\leq C \big(\norm{\boldsymbol{y}_r}^2_{1,\Omega} + \norm{\boldsymbol{y}_r}_{1,\Omega} \norm{\boldsymbol{y}_h}_{1,\Omega}\big) \nonumber\\
		&\leq C \big(\bar{\boldsymbol{z}}^4 + \bar{\boldsymbol{z}}^3  + \bar{\boldsymbol{z}} \tilde{\boldsymbol{z}} + \tilde{\boldsymbol{z}}\bar{\boldsymbol{z}}^2  + \tilde{\boldsymbol{z}}^2\big). \label{SSC_thm1_eq11}
	\end{align}
	The same estimates hold for the second order derivatives with respect to $\boldsymbol{u}$ in \eqref{SSC_thm1_eq9} in an analogous way,
	\begin{align}
		|\boldsymbol{\mathcal{L}}_{\boldsymbol{u} \boldsymbol{u}}(\bar{\boldsymbol{w}}, \bar{\boldsymbol{\Lambda}}) \left[\boldsymbol{u}_r\right]^2 + 
		2 \boldsymbol{\mathcal{L}}_{\boldsymbol{u} \boldsymbol{u}}(\bar{\boldsymbol{w}}, \bar{\boldsymbol{\Lambda}}) \left[\boldsymbol{u}_h, \boldsymbol{u}_r\right]| &\leq C \big(\norm{\boldsymbol{u}_r}^2_{1,\Omega} + \norm{\boldsymbol{u}_r}_{1,\Omega} \norm{\boldsymbol{u}_h}_{1,\Omega}\big) \nonumber\\
		&\leq C \big(\bar{\boldsymbol{z}}^4 + \bar{\boldsymbol{z}}^3  + \bar{\boldsymbol{z}} \tilde{\boldsymbol{z}} + \tilde{\boldsymbol{z}}\bar{\boldsymbol{z}}^2 + \tilde{\boldsymbol{z}}^2\big). \label{SSC_thm1_eq12}
	\end{align}
	Substituting \eqref{SSC_thm1_eq10}-\eqref{SSC_thm1_eq12} in \eqref{SSC_thm1_eq9} results to
	\begin{align}\label{SSC_thm1_eq!}
		\boldsymbol{\mathcal{L}}_{\boldsymbol{w} \boldsymbol{w}}(\bar{\boldsymbol{w}}, \bar{\boldsymbol{\Lambda}}) \left[\boldsymbol{w} - \bar{\boldsymbol{w}}\right]^2 \geq \sigma \norm{\boldsymbol{h}}^2_{L^{r}(\Omega)} - C (\bar{\boldsymbol{z}}^4 + \bar{\boldsymbol{z}}^3  + \bar{\boldsymbol{z}} \tilde{\boldsymbol{z}} + \tilde{\boldsymbol{z}}\bar{\boldsymbol{z}}^2  + \tilde{\boldsymbol{z}}^2 ).
	\end{align}
	Next we eliminate  $\boldsymbol{h}$ in \eqref{SSC_thm1_eq!} such that only terms containing $\bar{\boldsymbol{\Lambda}}$ and  $\tilde{\boldsymbol{z}}$  appear. It can be easily seen that 
	$$\|\boldsymbol{\boldsymbol{U} - \bar{\boldsymbol{U}}}\|^2_{L^r(\Omega)} = \|\boldsymbol{U} - \tilde{\boldsymbol{U}} + \boldsymbol{h}\|^2_{L^r(\Omega)} \leq 2 \big(\|\boldsymbol{U} - \tilde{\boldsymbol{U}}\|^2_{L^r(\Omega)} + \norm{\boldsymbol{h}}^2_{L^r(\Omega)}\big),$$
	which implies
	\begin{align}\label{SSC_thm1_eq13}
		\norm{\boldsymbol{h}}^2_{L^{r}(\Omega)} \geq \frac{1}{2} \bar{\boldsymbol{z}}^2 - \tilde{\boldsymbol{z}}^2 .
	\end{align}
	Invoking \eqref{SSC_thm1_eq11}, \eqref{SSC_thm1_eq12} and \eqref{SSC_thm1_eq13} in \eqref{SSC_thm1_eq!},  we get
	\begin{align}
		\frac{1}{2}\boldsymbol{\mathcal{L}}_{\boldsymbol{w} \boldsymbol{w}}(\bar{\boldsymbol{w}}, \bar{\boldsymbol{\Lambda}}) \left[\boldsymbol{w} - \bar{\boldsymbol{w}}\right]^2 &\geq \frac{\sigma}{4} \bar{\boldsymbol{z}}^2 - C \left(\bar{\boldsymbol{z}}^4 + \bar{\boldsymbol{z}}^3  + \bar{\boldsymbol{z}} \tilde{\boldsymbol{z}} + \tilde{\boldsymbol{z}}\bar{\boldsymbol{z}}^2 + \tilde{\boldsymbol{z}}^2\right). \label{SSC_thm1_eq14}
		%		& \geq \frac{\sigma}{4} \bar{\boldsymbol{z}}^2 - C \left(\bar{\boldsymbol{z}}^4 + \bar{\boldsymbol{z}}^3 + \tilde{\boldsymbol{z}}^2\right) \nonumber
		%		& \geq \bar{\boldsymbol{z}}^2 \left(\frac{\sigma}{4} - C \left[\bar{\boldsymbol{z}}^2 + \bar{\boldsymbol{z}}\right]\right) - C \tilde{\boldsymbol{z}}^2. \nonumber
	\end{align}
	
	Similarly, we investigate the third order derivative which presents itself in the remainder term $R_{\boldsymbol{\mathcal{L}}}$ of \eqref{TaylorExpansion}. Proceeding analogously, we get
	\begin{align}
		\boldsymbol{\mathcal{L}_{\boldsymbol{y} \boldsymbol{y} \boldsymbol{y}}} (\bar{\boldsymbol{w}} + \varTheta (\boldsymbol{w} - \bar{\boldsymbol{w}}), \bar{\boldsymbol{\Lambda}}) \left[\boldsymbol{y} - \bar{\boldsymbol{y}}\right]^3 &= \boldsymbol{\mathcal{L}_{\boldsymbol{y} \boldsymbol{y} \boldsymbol{y}}} (\bar{\boldsymbol{w}} + \varTheta (\boldsymbol{w} - \bar{\boldsymbol{w}}), \bar{\boldsymbol{\Lambda}}) \left[\boldsymbol{y}_h\right]^3 + \boldsymbol{\mathcal{L}_{\boldsymbol{y} \boldsymbol{y} \boldsymbol{y}}} (\bar{\boldsymbol{w}} + \varTheta (\boldsymbol{w} - \bar{\boldsymbol{w}}), \bar{\boldsymbol{\Lambda}}) \left[\boldsymbol{y}_r\right]^3 \nonumber\\
		&~~~+3 \; \boldsymbol{\mathcal{L}_{\boldsymbol{y} \boldsymbol{y} \boldsymbol{y}}} (\bar{\boldsymbol{w}} + \varTheta (\boldsymbol{w} - \bar{\boldsymbol{w}}), \bar{\boldsymbol{\Lambda}}) \left[\boldsymbol{y}_h, \boldsymbol{y}_h, \boldsymbol{y}_r\right] \nonumber\\
		&~~~+3 \; \boldsymbol{\mathcal{L}_{\boldsymbol{y} \boldsymbol{y} \boldsymbol{y}}} (\bar{\boldsymbol{w}} + \varTheta (\boldsymbol{w} - \bar{\boldsymbol{w}}), \bar{\boldsymbol{\Lambda}}) \left[\boldsymbol{y}_h, \boldsymbol{y}_r, \boldsymbol{y}_r\right]. 
	\end{align}
	Using Lemma \ref{LagrangianFrechetDifferentiability} along with \eqref{SSC_thm1_eq4} and \eqref{SSC_thm1_eq7} gives
	\begin{align*}
		|\boldsymbol{\mathcal{L}_{\boldsymbol{y} \boldsymbol{y} \boldsymbol{y}}} (\bar{\boldsymbol{w}} + \varTheta (\boldsymbol{w} - \bar{\boldsymbol{w}}), \bar{\boldsymbol{\Lambda}}) \left[\boldsymbol{y}_h\right]^3| &\leq C_{\boldsymbol{\mathcal{L}}_3} \norm{\boldsymbol{y}_h}^3_{1,\Omega} \leq C (\tilde{\boldsymbol{z}} + \bar{\boldsymbol{z}})^3, \\
		|\boldsymbol{\mathcal{L}_{\boldsymbol{y} \boldsymbol{y} \boldsymbol{y}}} (\bar{\boldsymbol{w}} + \varTheta (\boldsymbol{w} - \bar{\boldsymbol{w}}), \bar{\boldsymbol{\Lambda}}) \left[\boldsymbol{y}_r\right]^3| &\leq C_{\boldsymbol{\mathcal{L}}_3} \norm{\boldsymbol{y}_r}^3_{1,\Omega} \leq C (\tilde{\boldsymbol{z}} + \bar{\boldsymbol{z}}^2)^3, \\
		|\boldsymbol{\mathcal{L}_{\boldsymbol{y} \boldsymbol{y} \boldsymbol{y}}} (\bar{\boldsymbol{w}} + \varTheta (\boldsymbol{w} - \bar{\boldsymbol{w}}), \bar{\boldsymbol{\Lambda}}) \left[\boldsymbol{y}_h, \boldsymbol{y}_h, \boldsymbol{y}_r\right]| &\leq C_{\boldsymbol{\mathcal{L}}_3} \norm{\boldsymbol{y}_h}^2_{1,\Omega}  \norm{\boldsymbol{y}_r}_{1,\Omega} 
		\leq C \big[\left(\tilde{\boldsymbol{z}} + \bar{\boldsymbol{z}}\right)^2  \left(\tilde{\boldsymbol{z}} + \bar{\boldsymbol{z}}^2 \right)\big],  \nonumber\\
		|\boldsymbol{\mathcal{L}_{\boldsymbol{y} \boldsymbol{y} \boldsymbol{y}}} (\bar{\boldsymbol{w}} + \varTheta (\boldsymbol{w} - \bar{\boldsymbol{w}}), \bar{\boldsymbol{\Lambda}}) \left[\boldsymbol{y}_h, \boldsymbol{y}_r, \boldsymbol{y}_r\right]| &\leq C_{\boldsymbol{\mathcal{L}}_3} \norm{\boldsymbol{y}_h}_{1,\Omega}  \norm{\boldsymbol{y}_r}^2_{1,\Omega} 
		\leq C \big[\left(\tilde{\boldsymbol{z}} + \bar{\boldsymbol{z}}\right)  \left(\tilde{\boldsymbol{z}} + \bar{\boldsymbol{z}}^2\right)^2\big].  \nonumber
	\end{align*}	
	On further simplifying after invoking the above bounds in $R_{\boldsymbol{\mathcal{L}}}$, we can get the following estimate,
	\begin{align}
		R_{\boldsymbol{\mathcal{L}}} &\geq - C\big(\bar{\boldsymbol{z}}^{6} + \bar{\boldsymbol{z}}^5 + \bar{\boldsymbol{z}}^{3} + \tilde{\boldsymbol{z}}^2 \bar{\boldsymbol{z}}  + \tilde{\boldsymbol{z}}^2 \bar{\boldsymbol{z}}^2 + \tilde{\boldsymbol{z}} \bar{\boldsymbol{z}}^2  + \tilde{\boldsymbol{z}} \bar{\boldsymbol{z}}^4 +  \tilde{\boldsymbol{z}}^3\big). \label{SSC_thm1_eq15}
	\end{align}
	Adding \eqref{SSC_thm1_eq14} and \eqref{SSC_thm1_eq15} and applying Young's inequality to separate powers of $\bar{\boldsymbol{z}}$ and $\tilde{\boldsymbol{z}},$ we get 
	\begin{align}\label{SSC_thm1_eq16}
		\frac{1}{2}\boldsymbol{\mathcal{L}}_{\boldsymbol{w} \boldsymbol{w}}(\bar{\boldsymbol{w}}, \bar{\boldsymbol{\Lambda}}) \left[\boldsymbol{w} - \bar{\boldsymbol{w}}\right]^2 + R_{\boldsymbol{\mathcal{L}}} &\geq \frac{\sigma}{4} \bar{\boldsymbol{z}}^2  - C \big(\bar{\boldsymbol{z}}^{6} + \bar{\boldsymbol{z}}^5 +  \bar{\boldsymbol{z}}^4 + \bar{\boldsymbol{z}}^{3} + \bar{\boldsymbol{z}} \tilde{\boldsymbol{z}} + \tilde{\boldsymbol{z}}^2 \bar{\boldsymbol{z}}  + \tilde{\boldsymbol{z}}^2 \bar{\boldsymbol{z}}^2 + \tilde{\boldsymbol{z}} \bar{\boldsymbol{z}}^2  \nonumber\\
		&~~~+ \tilde{\boldsymbol{z}} \bar{\boldsymbol{z}}^4  + \tilde{\boldsymbol{z}}^2 + \tilde{\boldsymbol{z}}^3 \big)\nonumber \\
		&\geq \frac{\sigma}{8} \bar{\boldsymbol{z}}^2 - C \left(\bar{\boldsymbol{z}}^8 + \bar{\boldsymbol{z}}^6 + \bar{\boldsymbol{z}}^5 + \bar{\boldsymbol{z}}^4 + \bar{\boldsymbol{z}}^3 + \tilde{\boldsymbol{z}}^4 + \tilde{\boldsymbol{z}}^3 + \tilde{\boldsymbol{z}}^2\right) \nonumber\\
		&\geq \bar{\boldsymbol{z}}^2 \left(\frac{\sigma}{8} - C\left[\bar{\boldsymbol{z}}^6 + \bar{\boldsymbol{z}}^4 + \bar{\boldsymbol{z}}^3 + \bar{\boldsymbol{z}}^2 + \bar{\boldsymbol{z}}\right]\right) - C \left(\tilde{\boldsymbol{z}}^4 + \tilde{\boldsymbol{z}}^3 + \tilde{\boldsymbol{z}}^2\right). 
	\end{align}
	If $\boldsymbol{U}$ is sufficiently close to $\bar{\boldsymbol{U}}$, that is, $\bar{\boldsymbol{z}} \leq N_{s,r} \|\boldsymbol{U} - \bar{\boldsymbol{U}}\|_{L^{s}(\Omega)} \leq N_{s,r} \; \rho_1$, then the term $$\left(\frac{\sigma}{8} - C\left[\bar{\boldsymbol{z}}^6 + \bar{\boldsymbol{z}}^4 + \bar{\boldsymbol{z}}^3 + \bar{\boldsymbol{z}}^2 + \bar{\boldsymbol{z}}\right]\right) \geq \frac{\sigma}{16}.$$ Thus we arrive at
	\begin{align}\label{SSC_thm1_eq17}
		\frac{1}{2}\boldsymbol{\mathcal{L}}_{\boldsymbol{w} \boldsymbol{w}}(\bar{\boldsymbol{w}}, \bar{\boldsymbol{\Lambda}}) \left[\boldsymbol{w} - \bar{\boldsymbol{w}}\right]^2 + R_{\boldsymbol{\mathcal{L}}} \geq \frac{\sigma}{16}	\bar{\boldsymbol{z}}^2 - C \left(\tilde{\boldsymbol{z}}^4 + \tilde{\boldsymbol{z}}^3 + \tilde{\boldsymbol{z}}^2\right).
	\end{align}
	Focusing on $\tilde{\boldsymbol{z}}$, by definition of our constructed control $\tilde{\boldsymbol{U}},$ $(U - \tilde{U})_i = 0$ on $\Omega \backslash \Omega_{\varepsilon,i}$ and $(\tilde{U} - \bar{U})_i = 0$ on $\Omega_{\varepsilon,i}.$ Hence using \eqref{ControlInterpolation},
	we conclude the following:
	\begin{align}
		\tilde{\boldsymbol{z}}^2 &\leq \|\boldsymbol{U} - \tilde{\boldsymbol{U}}\|_{L^1(\Omega)} \|\boldsymbol{U} - \tilde{\boldsymbol{U}}\|_{L^s(\Omega)} = \norm{\boldsymbol{U} - \bar{\boldsymbol{U}}}_{L^{1,\Omega_{\varepsilon}}} \norm{\boldsymbol{U} - \bar{\boldsymbol{U}}}_{L^{s,\Omega_{\varepsilon}}} \leq \rho_1 \norm{\boldsymbol{U} - \bar{\boldsymbol{U}}}_{L^{1,\Omega_{\varepsilon}}}. 
		%\nonumber\\	\tilde{\boldsymbol{z}}^4 &\leq \|\boldsymbol{U} - \tilde{\boldsymbol{U}}\|^2_{L^1(\Omega)} \|\boldsymbol{U} - \tilde{\boldsymbol{U}}\|^2_{L^s(\Omega)} = \norm{\boldsymbol{U} - \bar{\boldsymbol{U}}}^2_{L^{1,\Omega_{\varepsilon}}} \norm{\boldsymbol{U} - \bar{\boldsymbol{U}}}^2_{L^{s,\Omega_{\varepsilon}}} \leq \rho_1^2 \norm{\boldsymbol{U} - \bar{\boldsymbol{U}}}^2_{L^{1,\Omega_{\varepsilon}}}, \nonumber\\
		%	\tilde{\boldsymbol{z}}^3 &\leq \|\boldsymbol{U} - \tilde{\boldsymbol{U}}\|^{3/2}_{L^1(\Omega)} \|\boldsymbol{U} - \tilde{\boldsymbol{U}}\|^{3/2}_{L^s(\Omega)} = \norm{\boldsymbol{U} - \bar{\boldsymbol{U}}}^{3/2}_{L^{1,\Omega_{\varepsilon}}} \norm{\boldsymbol{U} - \bar{\boldsymbol{U}}}^{3/2}_{L^{s,\Omega_{\varepsilon}}} \leq \rho_1^{3/2} \norm{\boldsymbol{U} - \bar{\boldsymbol{U}}}^{3/2}_{L^{1,\Omega_{\varepsilon}}}. \nonumber		
	\end{align}
	Using the above bound, \eqref{SSC_thm1_eq17}, \eqref{SSC_thm1_eq18} and \eqref{SSC_thm1_eq1} in \eqref{TaylorExpansion}, we obtain
	\begin{align*}
		J(\boldsymbol{w}) &\geq J(\bar{\boldsymbol{w}}) + \varepsilon \norm{\boldsymbol{U} - \bar{\boldsymbol{U}}}_{L^{1,\Omega_{\varepsilon}}} + \frac{\sigma}{16} \norm{\boldsymbol{U} - \bar{\boldsymbol{U}}}^2_{L^{r}(\Omega)} \\
		&\qquad- C \big(\rho_1 + \rho_1^{3/2} \norm{\boldsymbol{U} - \bar{\boldsymbol{U}}}^{1/2}_{L^{1,\Omega_{\varepsilon}}} +\rho_1^2 \norm{\boldsymbol{U} - \bar{\boldsymbol{U}}}_{L^{1,\Omega_{\varepsilon}}} +  \big) \norm{\boldsymbol{U} - \bar{\boldsymbol{U}}}_{L^{1,\Omega_{\varepsilon}}} \\
		&\geq J(\bar{\boldsymbol{w}}) + \varepsilon \norm{\boldsymbol{U} - \bar{\boldsymbol{U}}}^2_{L^{1,\Omega_{\varepsilon}}} - C \rho_2 \norm{\boldsymbol{U} - \bar{\boldsymbol{U}}}^2_{L^{1,\Omega_{\varepsilon}}} + \frac{\sigma}{16} \norm{\boldsymbol{U} - \bar{\boldsymbol{U}}}^2_{L^{r}(\Omega)} \nonumber\\
		&\geq J(\bar{\boldsymbol{w}}) + \left(\varepsilon - C \rho_2\right) \norm{\boldsymbol{U} - \bar{\boldsymbol{U}}}^2_{L^{1,\Omega_{\varepsilon}}} + \frac{\sigma}{16} \norm{\boldsymbol{U} - \bar{\boldsymbol{U}}}^2_{L^{r}(\Omega)}.
	\end{align*}
	For $\rho_2$ small enough such that $\varepsilon - C \rho_2 > 0$, the claim is proven with $\vartheta = \frac{\sigma}{16}$ and $\rho = \min (\rho_1, \rho_2)$.
\end{proof}

The following result is an immediate consequence of Theorem \ref{SSC_thm1}.

\begin{corollary}
	Let the assumptions of Theorem \ref{SSC_thm1} hold. Then $\bar{\boldsymbol{U}}$ is a locally optimal control in the sense of $\boldsymbol{L}^s(\Omega)$.  
\end{corollary}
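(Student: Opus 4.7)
The plan is to deduce the corollary as a near-immediate consequence of the quadratic growth inequality established in Theorem \ref{SSC_thm1}. I will start by fixing an arbitrary admissible control $\boldsymbol{U} \in \boldsymbol{\mathcal{U}}_{ad}$ satisfying $\|\boldsymbol{U} - \bar{\boldsymbol{U}}\|_{L^s(\Omega)} \leq \rho$, where $\rho > 0$ is the radius guaranteed by Theorem \ref{SSC_thm1}. Associated with $\boldsymbol{U}$, Lemma \ref{State.Wp.Reg} (in combination with Remark \ref{Zad_bdd_in_Lr}) provides a unique admissible state pair $(\boldsymbol{u}, \boldsymbol{y})$, so that $\boldsymbol{w} = (\boldsymbol{u}, \boldsymbol{y}, \boldsymbol{U}) \in \boldsymbol{S}_{ad}$ and Theorem \ref{SSC_thm1} is directly applicable.

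Invoking Theorem \ref{SSC_thm1}, there exist $\vartheta > 0$ and $\rho > 0$ such that
\begin{align*}
J(\boldsymbol{w}) \;\geq\; J(\bar{\boldsymbol{w}}) + \vartheta \, \|\boldsymbol{U} - \bar{\boldsymbol{U}}\|^2_{L^r(\Omega)}.
\end{align*}
Since $\vartheta > 0$ and $\|\boldsymbol{U} - \bar{\boldsymbol{U}}\|^2_{L^r(\Omega)} \geq 0$, the second term on the right hand side is nonnegative, and we immediately obtain $J(\boldsymbol{w}) \geq J(\bar{\boldsymbol{w}})$ for every admissible $\boldsymbol{U}$ in the $\boldsymbol{L}^s$-neighbourhood of radius $\rho$ around $\bar{\boldsymbol{U}}$. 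Comparing with Definition \ref{local_optimal_control}, this is precisely the statement that $\bar{\boldsymbol{U}}$ is locally optimal in the sense of $\boldsymbol{L}^s(\Omega)$.

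Because this reduction only uses the sign of $\vartheta$ and not any finer structural information, there is essentially no obstacle to overcome: the whole work has been done in the growth-rate theorem, and in particular in controlling the remainder terms of the Lagrangian Taylor expansion via the strongly active set construction. I would therefore write the argument as a brief, two-sentence formal proof that cites Theorem \ref{SSC_thm1} and Definition \ref{local_optimal_control} directly, noting that the quadratic $\boldsymbol{L}^r$ growth is actually stronger than what local $\boldsymbol{L}^s$-optimality alone requires.
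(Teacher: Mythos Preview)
Your proposal is correct and matches the paper's approach exactly: the paper states the corollary as ``an immediate consequence of Theorem \ref{SSC_thm1}'' without further argument, and your two-step deduction (apply the growth inequality, drop the nonnegative quadratic term, compare with Definition \ref{local_optimal_control}) is precisely that immediate consequence spelled out.
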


\begin{remark}\label{relaxing_delta}
	In three dimensions, the restriction on $\delta$ can be relaxed by choosing $(\boldsymbol{\zeta}, \boldsymbol{\mu}) \in \boldsymbol{X} \cap \boldsymbol{H}^{3/2+\delta}(\Omega) \times [H^{3/2+\delta}(\Omega)]^2.$ This is reasonable since we have already shown in Remark \ref{adjointRegularity} that the weak solution to the adjoint equation has this extra regularity, from which we can deduce that the linearized equations in \eqref{SSC} will also have this characteristic.
\end{remark}

%\begin{remark}
%	Following the framework developed and choosing $r' = 2, r = 2$ and $s = \infty$, we can achieve the quadratic growth of the cost functional with respect to $L^2$-norm. Consequently, we then have to work with $L^{\infty}$- neighbourhoods of the reference control.
%\end{remark}
In Theorem, \ref{SSC_thm1} we assumed that the reference control satisfies \eqref{SSC}. In the following theorem, we investigate conditions under which it can be ensured that \eqref{SSC} holds. 

\begin{theorem}\label{SSC_thm2}
	Let $\bar{\boldsymbol{w}}=(\bar{\boldsymbol{u}}, \bar{\boldsymbol{y}}, \bar{\boldsymbol{U}})$ be an admissible triplet for our optimal control problem. Suppose $\bar{\boldsymbol{w}}$ satisfies the first-order necessary optimality conditions with associated adjoint state $\bar{\boldsymbol{\Lambda}}$. Then \eqref{SSC} is fulfilled if the parameter $\lambda$ is sufficiently large or the residuals $\norm{\bar{\boldsymbol{y}} - \boldsymbol{y}_d}_{0,\Omega}$ and $\norm{\bar{\boldsymbol{u}} - \boldsymbol{u}_d}_{0,\Omega}$ are sufficiently small such that
	\begin{align}\label{SSC_thm2_eq2}
		\lambda &> \frac{1}{C_{2,r}} \big(M_{\boldsymbol{\psi}} M_{\boldsymbol{\varphi}} \big(C_{6_d} C_{3_d} + C_{F_{\boldsymbol{y} \boldsymbol{y}}} C_{4_d}^2) + C_{\nu_{TT}} \hat{M} \bar{M} M_{\boldsymbol{\psi}}\big)
	\end{align}  is satisfied.
\end{theorem}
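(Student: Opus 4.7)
My plan is to exploit the explicit form of $\boldsymbol{\mathcal{L}}_{\boldsymbol{w}\boldsymbol{w}}$ from Lemma \ref{LagrangianFrechetDifferentiability} and show that, after the bounds in $Q$ below are written in terms of $\|\boldsymbol{h}\|_{L^r(\Omega)}$ via the linearized equation, the term $\lambda\|\boldsymbol{h}\|^2_{0,\Omega}$ dominates. Let $(\boldsymbol{\zeta},\boldsymbol{\mu},\boldsymbol{h})$ be any admissible direction appearing in \eqref{SSC}. Setting $\boldsymbol{\zeta}_1=\boldsymbol{\zeta}_2=\boldsymbol{\zeta}$, $\boldsymbol{\mu}_1=\boldsymbol{\mu}_2=\boldsymbol{\mu}$, $\boldsymbol{h}_1=\boldsymbol{h}_2=\boldsymbol{h}$ in \eqref{Luu}--\eqref{LUU} gives
\begin{align*}
	\boldsymbol{\mathcal{L}}_{\boldsymbol{w}\boldsymbol{w}}(\bar{\boldsymbol{w}},\bar{\boldsymbol{\Lambda}})[(\boldsymbol{\zeta},\boldsymbol{\mu},\boldsymbol{h})]^2
	= \|\boldsymbol{\zeta}\|^2_{0,\Omega} + \|\boldsymbol{\mu}\|^2_{0,\Omega} + \lambda\|\boldsymbol{h}\|^2_{0,\Omega} + Q,
\end{align*}
where $Q := -2c(\boldsymbol{\zeta},\boldsymbol{\zeta},\bar{\boldsymbol{\varphi}}) - (\nu_{TT}(\bar{T})(\mu^T)^2\nabla\bar{\boldsymbol{u}},\nabla\bar{\boldsymbol{\varphi}}) + (\boldsymbol{F}_{\boldsymbol{y}\boldsymbol{y}}(\bar{\boldsymbol{y}})(\boldsymbol{\mu}\otimes\boldsymbol{\mu}),\bar{\boldsymbol{\varphi}})$ collects the possibly indefinite contributions. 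I would discard the nonnegative $L^2$-terms of $\boldsymbol{\zeta},\boldsymbol{\mu}$ and concentrate on bounding $|Q|$ by a multiple of $\|\boldsymbol{h}\|^2_{L^r(\Omega)}$.

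The three terms in $Q$ are estimated exactly as in the proof of Lemma \ref{LagrangianFrechetDifferentiability} using Lemma \ref{ContinuityProp}, the embedding $H^1\hookrightarrow L^4$, the fractional Gagliardo--Nirenberg inequality \eqref{FractionalGagliardoNirenberg}, the $H^1$-adjoint bound $\|\bar{\boldsymbol{\varphi}}\|_{1,\Omega}\leq M_{\boldsymbol{\varphi}}$ from Lemma \ref{EnergyEstAdjoint}, the $H^{3/2+\delta}$-state bound $\|\bar{\boldsymbol{u}}\|_{3/2+\delta,\Omega}\leq \hat{M}$ from Theorem \ref{State.Wp.Reg}, and (in three dimensions) the $H^{3/2+\delta}$-adjoint bound $\|\bar{\boldsymbol{\varphi}}\|_{3/2+\delta,\Omega}\leq\bar{M}$ from Remark \ref{adjointRegularity}. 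This yields
\begin{align*}
	|Q| \leq 2C_{6_d}C_{3_d}M_{\boldsymbol{\varphi}}\|\boldsymbol{\zeta}\|^2_{1,\Omega} + C_{\nu_{TT}}\hat{M}\bar{M}\|\boldsymbol{\mu}\|^2_{1,\Omega} + C_{F_{\boldsymbol{y}\boldsymbol{y}}}C_{4_d}^2 M_{\boldsymbol{\varphi}}\|\boldsymbol{\mu}\|^2_{1,\Omega}.
\end{align*}

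Next, I would invoke Lemma \ref{EnergyEstLinearized} with $\langle\boldsymbol{\hat f},\boldsymbol{v}\rangle=(\boldsymbol{h},\boldsymbol{v})$ and $\boldsymbol{\tilde f}=\boldsymbol{0}$, which under the hypothesis \eqref{LinearizedEquationConditions} (implicit in the well-posedness already used to get $\bar{\boldsymbol{\Lambda}}$) gives $\|\boldsymbol{\zeta}\|_{1,\Omega} + \|\boldsymbol{\mu}\|_{1,\Omega} \leq M_{\boldsymbol{\psi}}\|\boldsymbol{h}\|_{L^{r}(\Omega)}$, where I reuse $M_{\boldsymbol{\psi}}$ to denote the linearized-estimate constant (with the explicit dependence on $\|\boldsymbol{h}\|_{L^r}$ factored out, consistent with its occurrence in \eqref{SSC_thm2_eq2}). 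Combining, I get
\begin{align*}
	|Q| \leq \big[M_{\boldsymbol{\psi}} M_{\boldsymbol{\varphi}}(C_{6_d}C_{3_d} + C_{F_{\boldsymbol{y}\boldsymbol{y}}}C_{4_d}^2) + C_{\nu_{TT}}\hat{M}\bar{M}M_{\boldsymbol{\psi}}\big]\,\|\boldsymbol{h}\|^2_{L^{r}(\Omega)} =: K\,\|\boldsymbol{h}\|^2_{L^{r}(\Omega)}.
\end{align*}

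Finally, since $r \leq 2$ and $\Omega$ is bounded, the embedding $L^2(\Omega)\hookrightarrow L^r(\Omega)$ provides a constant $C_{2,r}>0$ with $\|\boldsymbol{h}\|^2_{0,\Omega}\geq C_{2,r}\|\boldsymbol{h}\|^2_{L^{r}(\Omega)}$. Therefore
\begin{align*}
	\boldsymbol{\mathcal{L}}_{\boldsymbol{w}\boldsymbol{w}}(\bar{\boldsymbol{w}},\bar{\boldsymbol{\Lambda}})[(\boldsymbol{\zeta},\boldsymbol{\mu},\boldsymbol{h})]^2
	\geq \lambda\|\boldsymbol{h}\|^2_{0,\Omega} - K\|\boldsymbol{h}\|^2_{L^{r}(\Omega)}
	\geq (\lambda C_{2,r} - K)\,\|\boldsymbol{h}\|^2_{L^{r}(\Omega)}.
\end{align*}
If \eqref{SSC_thm2_eq2} holds, i.e.\ $\lambda C_{2,r}>K$, then $\sigma := \lambda C_{2,r}-K>0$ realises \eqref{SSC}. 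For the alternative form of the hypothesis, I would simply note that Lemma \ref{EnergyEstAdjoint} and Remark \ref{adjointRegularity} express $M_{\boldsymbol{\varphi}}$ and $\bar{M}$ as multiples of $\|\bar{\boldsymbol{u}}-\boldsymbol{u}_d\|_{0,\Omega}+\|\bar{\boldsymbol{y}}-\boldsymbol{y}_d\|_{0,\Omega}$ (plus data-dependent terms), so sufficiently small residuals drive $K$ below $\lambda C_{2,r}$ for any fixed $\lambda>0$. The main technical obstacle is tracking the precise $\|\boldsymbol{h}\|_{L^r}$-dependence of $M_{\boldsymbol{\psi}}$ through the linearised estimate to ensure the final bound is genuinely quadratic in $\|\boldsymbol{h}\|_{L^r}$; the smallness condition \eqref{LinearizedEquationConditions} must silently remain in force since it underlies Lemma \ref{EnergyEstLinearized}.
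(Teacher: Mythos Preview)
Your proposal is correct and follows essentially the same approach as the paper: write out $\boldsymbol{\mathcal{L}}_{\boldsymbol{w}\boldsymbol{w}}$ explicitly, bound the three indefinite terms via the trilinear estimate, \eqref{Cty_Fyy}, and \eqref{Cty_nuTT}, convert the $H^1$-norms of $(\boldsymbol{\zeta},\boldsymbol{\mu})$ to $\|\boldsymbol{h}\|_{L^r(\Omega)}$ through Lemma \ref{EnergyEstLinearized}, and then use the embedding $L^2(\Omega)\hookrightarrow L^r(\Omega)$ to absorb the negative part into $\lambda\|\boldsymbol{h}\|^2_{0,\Omega}$. The only cosmetic discrepancy is the factor $2$ you carry on the trilinear term, which the paper silently absorbs into the generic constants when forming \eqref{SSC_thm2_eq2}.
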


\begin{proof}
	The second order derivative of the Lagrangian is given by
	\begin{align}
		\boldsymbol{\mathcal{L}}_{\boldsymbol{w} \boldsymbol{w}}(\bar{\boldsymbol{w}}, \bar{\boldsymbol{\Lambda}}) \left[(\boldsymbol{\zeta}, \boldsymbol{\mu}, \boldsymbol{h})\right]^2 &=  \norm{\boldsymbol{\zeta}}^2_{0,\Omega} - 2 \; c(\boldsymbol{\zeta}, \boldsymbol{\zeta}, \bar{\boldsymbol{\varphi}}) + \norm{\boldsymbol{\mu}}^2_{0,\Omega} - (\nu_{TT}(\bar{T}) (\mu^T)^2 \nabla \bar{\boldsymbol{u}}, \nabla \bar{\boldsymbol{\varphi}}) \label{SSC_thm2_eq1}\\
		&~~~~\hspace{1cm}+ (\boldsymbol{F}_{\boldsymbol{y} \boldsymbol{y}}(\bar{\boldsymbol{y}}) (\boldsymbol{\mu}\otimes\boldsymbol{\mu}), \bar{\boldsymbol{\varphi}}) + \lambda \norm{\boldsymbol{h}}^2_{0,\Omega}. \nonumber
	\end{align}
	The positivity of \eqref{SSC_thm2_eq1} can be disturbed by the second, fourth and fifth terms. The states $(\boldsymbol{\zeta}, \boldsymbol{\mu})$ are weak solutions of the linear equations \eqref{adjointEq}. We can estimate these terms, thanks to the precise estimates of the nonlinear and linearized system of equations, \eqref{Cty_Fyy}, and \eqref{Cty_nuTT}, as follows:
	\begin{align*}
		&|c(\boldsymbol{\zeta}, \boldsymbol{\zeta}, \bar{\boldsymbol{\varphi}})| \leq C_{6_d} C_{3_d} \norm{\nabla \bar{\boldsymbol{\varphi}}}_{0,\Omega} \norm{\boldsymbol{\zeta}}^2_{1,\Omega} \leq C_{6_d} C_{3_d} M_{\boldsymbol{\psi}} M_{\boldsymbol{\varphi}} \norm{\boldsymbol{h}}^2_{L^r(\Omega)}, \\
		&|(\boldsymbol{F}_{\boldsymbol{y} \boldsymbol{y}}(\bar{\boldsymbol{y}}) (\boldsymbol{\mu} \otimes \boldsymbol{\mu}), \bar{\boldsymbol{\varphi}})| \leq C_{F_{\boldsymbol{y}  \boldsymbol{y}}} C_{4_d}^2 \norm{\boldsymbol{\mu}}^2_{1,\Omega} \norm{\bar{\boldsymbol{\varphi}}}_{1,\Omega} \leq C_{F_{\boldsymbol{y} \boldsymbol{y}}} C_{4_d}^2 M_{\boldsymbol{\psi}} M_{\boldsymbol{\varphi}} \norm{\boldsymbol{h}}^2_{L^r(\Omega)}, \\
		&|(\nu_{TT}(\bar{T}) (\mu^T)^2 \nabla \bar{\boldsymbol{u}}, \nabla \bar{\boldsymbol{\varphi}})| \leq C_{\nu_{TT}} \hat{M} \bar{M} M_{\boldsymbol{\psi}}  \norm{\boldsymbol{h}}^2_{L^r(\Omega)},
	\end{align*}
	where, $\hat{M}$ is defined according to \eqref{Cty_nuTT} in two and three dimensions, respectively. Thus using the embeddings of $L^r$-spaces  with the embedding constant $C_{2,r}$ depending only on the domain $\Omega$, we obtain 
	\begin{align*}
		&	\boldsymbol{\mathcal{L}}_{\boldsymbol{w} \boldsymbol{w}}(\bar{\boldsymbol{w}}, \bar{\boldsymbol{\Lambda}}) \left[(\boldsymbol{\zeta}, \boldsymbol{\mu}, \boldsymbol{h})\right]^2 \nonumber\\&\geq \norm{\boldsymbol{\zeta}}^2_{0,\Omega} + \norm{\boldsymbol{\mu}}^2_{0,\Omega} + \big(\lambda C_{2,r} - \big(M_{\boldsymbol{\psi}} M_{\boldsymbol{\varphi}} \big(C_{6_d} C_{3_d} + C_{F_{\boldsymbol{y} \boldsymbol{y}}} C_{4_d}^2 \big) + C_{\nu_{TT}} \bar{M} \hat{M} M_{\boldsymbol{\psi}} \big)\big) \norm{\boldsymbol{h}}^2_{L^r(\Omega)}.	
	\end{align*}
	This implies that $\boldsymbol{\mathcal{L}}_{\boldsymbol{w} \boldsymbol{w}}$ is positive definite, if 
	\eqref{SSC_thm2_eq2} holds,
	that is, either $\lambda$ is large enough or the residuals $\norm{\bar{\boldsymbol{y}} - \boldsymbol{y}_d}_{0,\Omega}$ and $\norm{\bar{\boldsymbol{u}} - \boldsymbol{u}_d}_{0,\Omega}$ are small enough.
\end{proof}

%%%%%%%%%%%%%%%%%%%%%%%%%%%%%%%%%%%%%%%%%%%%%%%%%%%%%%%%%%%%%%%%%%%%%%%%%%%%%%%%%%%%%%%%%%%%%%%%%%%%%%%%%%%%%%%%%%%%%%%%%%%%%%%%%%%%%%%%%%%%%%%%%%%%%%%%%%%%%%%%%%%%%%%%%%%%%%

\section*{Acknowledgements}
The authors greatly acknowledge the funding from SERB-CRG India (Grant Number : CRG/2021/002569). The first author also gratefully thanks Indian Institute of Technology Roorkee, where majority of this works was done.

%	\medskip\noindent
%	{\bf Data availability:} 
%	Data sharing not applicable to this article as no datasets were generated or analysed during the current study.
%	
%	
%	\medskip\noindent	{\bf Deceleration:} 	The authors have no competing interests to declare that are relevant to the content of this article.
%	
%	\medskip\noindent	{\bf Conflict of interest:}  On behalf of all authors, the corresponding author states that there is no conflict of interest.

\addcontentsline{toc}{section}{References}
\bibliographystyle{ieeetr} %{IEEEtran,ieeetr, Abbrv, Alpha, Apalike,jmbDOI, Plain}
\bibliography{References}
.\end{document}